\newtheorem{theorem}{Theorem}[section]
\newtheorem{remark}{Remark}[section]
\newtheorem{lemma}[theorem]{Lemma}
\newtheorem{proposition}[theorem]{Proposition}
\renewcommand{\div}{ {\rm div }  }
\newcommand{\bt}{\begin{theorem}}
\newcommand{\bl}{\begin{lemma}}
\newcommand{\el}{\end{lemma}}
\newcommand{\et}{\end{theorem}}
\newcommand{\curl}{{\rm curl} }
\newcommand{\bn}{\begin{eqnarray}}
\newcommand{\en}{\end{eqnarray}}
\newcommand{\bnn}{\begin{eqnarray*}}
\newcommand{\enn}{\end{eqnarray*}}
\newcommand{\bnnn}{\begin{eqnarray*}}
\newcommand{\ennn}{\end{eqnarray*}}
\newcommand{\ba}{\begin{aligned}}
\newcommand{\ea}{\end{aligned}}
\newcommand{\be}{\begin{equation}}
\newcommand{\ee}{\end{equation}}
\def\norm[#1]#2{\|#2\|_{#1}}
\renewcommand{\thefootnote}{}
\newcommand\blfootnote[1]{%
  \begingroup
  \renewcommand\thefootnote{}\footnote{#1}%
  \addtocounter{footnote}{-1}%
  \endgroup
}
\title{Global Strong Solutions to the incompressible Magnetohydrodynamic Equations with Density-Dependent Viscosity and Vacuum in 3D Exterior Domains}
\author{$\text{Bing Yuan}^a, \text{Rong Zhang}^{a,b}, \text{Peng Zhou}^{a,c \ *}$\\
a. School of Mathematics and Computer Sciences,\\
Nanchang University, Nanchang 330031, P. R. China;\\
b. Institute of Mathematics and Interdisciplinary Sciences,\\
Nanchang University, Nanchang 330031, P. R. China;\\
c. Jiangxi Institute Economic Administrators,\\
Nanchang 330088, P. R. China}
\date{}
\begin{document}
\maketitle
\blfootnote{*Email: 2541417731@qq.com(B. Yuan), rzhang0921@gmail.com(R. Zhang), 975405097@qq.com(P. Zhou)}
\begin{abstract}
The nonhomogeneous incompressible Magnetohydrodynamic Equations with density-dependent viscosity is studied in three-dimensional (3D) exterior domains with slip boundary conditions. The key is the constraint of an additional initial value condition $B_0\in L^p (1\leqslant p<12/7)$, which increase decay-in-time rates of the solutions, thus we obtain the global existence of strong solutions provided the gradient of the initial velocity and initial magnetic field is suitably small.  In particular, the initial density is allowed to contain vacuum states and large oscillations. Moreover, the large-time behavior of the solution is also shown.
\end{abstract}
\textbf{Keywords:} incompressible magnetohydrodynamic equations;density-dependent viscosity;global existence;exterior domain
\section{Introduction}
The nonhomogeneous incompressible magnetohydrodynamic (MHD) equations with density-dependent viscosity read as follows:
\begin{equation}\label{MHD}
\begin{cases}
\rho_t+\div(\rho u)=0, \\
(\rho u)_t+\div(\rho u \otimes u)-\div(2 \mu(\rho) d)+\nabla P=B \cdot \nabla B, \\
B_t+u \cdot \nabla B-B \cdot \nabla u-\nu \Delta B=0, \\
\div u=0, \quad \div B=0.
\end{cases}
\end{equation}
Here, $t \geqslant 0$ is time, $x \in \Omega \subset \mathbb{R}^3$ is the spatial coordinate, and the unknown functions $\rho=\rho(x, t), u=(u^1, u^2, u^3)(x, t), B=$ $(B^1, B^2, B^3)(x, t)$ and $P=P(x, t)$ denote the fluid density, velocity, magnetic field and pressure, respectively. The deformation tensor is defined by
\begin{equation}\label{ddyi}
\displaystyle d=\frac{1}{2}\left[\nabla u+(\nabla u)^T\right],
\end{equation}
the viscosity $\mu(\rho)$ satisfies the following hypothesis:
\begin{equation}\label{vis}
\displaystyle \mu \in C^1[0, \infty), \quad \mu(\rho)>0 ,
\end{equation}
the constant $\nu>0$ is the resistivity coefficient which is inversely proportional to the electrical conductivity constant and acts as the magnetic diffusivity of magnetic fields.

In this paper, our purpose is to study the global existence and large-time asymptotic behavior of strong solutions of $\eqref{MHD}$ in the exterior of a simply connected bounded domain in $\mathbb{R}^3$. More precisely, the system $\eqref{MHD}$ will be investigated under the assumptions: The domain $\Omega$ is the exterior of a simply connected bounded smooth domain $D$ in $\mathbb{R}^3$ which represents the obstacle, i.e. $\Omega=\mathbb{R}^3-\bar{D}$ and its boundary $\partial \Omega$ is smooth. The system $\eqref{MHD}$ is equipped with the given initial data
\begin{equation}\label{ini}
\displaystyle (\rho, \rho u, B)|_{t=0}=(\rho_0,m_0,B_0), \quad \text{ in }  \Omega .
\end{equation}
In order to close the system, we need some boundary conditions and a condition at infinity. In this paper, we assume that the following boundary conditions
\begin{equation}\label{ubj}
\displaystyle u\cdot n=0, \ (d\cdot n)_{tan}=0,\quad \text { on } \partial \Omega,
\end{equation}
\begin{equation}\label{bbj}
\displaystyle B\cdot n=0, \ \curl B\times n=0,\quad \text { on } \partial \Omega,
\end{equation}
with the far field behavior
\begin{equation}\label{ydxw}
\displaystyle (u,B) \rightarrow (0,0),\quad \text { as }|x| \rightarrow \infty.
\end{equation}
It is worth noting that $\eqref{ubj}$ is a kind of slip boundary condition, where the symbol $v_{\text {tan}}$ represents the projection of tangent plane of the vector $v$ on $\partial \Omega$. For the magnetic field, the boundary condition $\eqref{bbj}$ describes that the boundary $\partial\Omega$ is a perfect conductor (see e.g., \cite{Dou2013}).

Magnetohydrodynamics studies the dynamics of electrically conducting fluids and the theory of the macroscopic interaction of electrically conducting fluids with a magnetic field. The dynamic motion of the fluid and the magnetic field interact strongly with each other, so the hydrodynamic and electrodynamic effects are coupled.
If there is no electromagnetic effect, that is H = 0, the MHD system reduces to the Navier-Stokes equations, which have been discussed by many mathematicians, please see \cite{Cho2004,Cho2003,He2021,Hua2015,Hua2014,Hua2012,Lio1996,Lij1673,Lij2014,Lia2015,Lub3143,Zha2015} and references therein.
Now, we briefly recall some results concerned with well-posedness of solutions for multi-dimensional incompressible MHD equations which are more relatively with our problem. Duvaut-Lions \cite{Duv1972} established the existence of the global weak solution to the 3D homogeneous MHD equations with finite energy, and also showed the smoothness and uniqueness in the two dimensional cased provided given initial data are smooth. The local strong solutions and the global ones with small initial data were constructed by Sermange-Teman \cite{Ser1983}. Gerbeau-Le Bris \cite{Ger1977} and Desjardins-Le Bris \cite{Des1998} established the global existence of weak solutions to the 3D nonhomogeneous MHD equations with finite energy on 3D bounded domains and on the torus, respectively. Abidi-Hmidi \cite{Abi2007} and Abidi-Paicu \cite{Abi2008} established the local and global existence of strong solutions in some Besov spaces when the initial density $\rho_0$ is bounded away from zero and the initial data is suitable small.

Recently, L\"u-Xu-Zhong \cite{Lub2017} established the global existence and uniqueness of strong solutions to the 2D Cauchy problem with initial data can be arbitrarily large and the contain vacuum state. Song \cite{Son2018} obtained the local strong solutions to the 3D nonhomogeneous incompressible MHD equations with density-dependent viscosity and vacuum. Zhang \cite{Zha2020} established the global existence and uniqueness of strong solutions to the 3D Cauchy problem provided that the initial velocity and magnetic field are suitable small in some nonhomogeneous Sobolev spaces. Zhang-Zhao \cite{Zha2010} obtained optimal decay estimates of the solution when the initial perturbation is sufficiently small and bounded in some $L^p$. Li \cite{Lih2018} established the global existence and uniqueness of strong solutions provided the initial velocity and current density are both suitably small in a 3D bounded domain with slip boundary condition. However, there are no works about the global existence of the strong (classical) solution to the initial-boundary-value problem $\eqref{MHD}$-$\eqref{ydxw}$ in a 3D exterior domain with initial density containing vacuum, at least to the best of our knowledge. In fact, this is the main aim of this paper.


Before stating the main results, we first explain the notations and conventions used throughout this paper. Set
$$
\int f \mathrm{~d} x \triangleq \int_{\Omega} f \mathrm{~d} x \text {. }
$$
Next, for $1 \leq r \leq \infty, k \geq 1$, the standard homogeneous Sobolev spaces are defined as follows:
$$
\left\{\begin{array}{l}
L^r=L^r(\Omega), \quad W^{k, r}=W^{k, r}(\Omega), \quad H^k=W^{k, 2}, \\
\|\cdot\|_{B_1 \cap B_2}=\|\cdot\|_{B_1}+\|\cdot\|_{B_2}, \text { for two Banach spaces } B_1 \text { and } B_2, \\
D^{k, r}=D^{k, r}(\Omega)=\{v \in L_{\text {loc }}^1(\Omega) \mid \nabla^k v \in L^r(\Omega)\}, \\
D^1=\{v \in L^6(\Omega) \mid \nabla v \in L^2(\Omega)\}, \\
C_{0, \sigma}^{\infty}=\{f \in C_0^{\infty} \mid \operatorname{div} f=0\}, \quad D_{0, \sigma}^1=\overline{C_{0, \sigma}^{\infty}} \text { closure in the norm of } D^1, \\
H_{0, \sigma}^1=\overline{C_{0, \sigma}^{\infty}} \text { closure in the norm of } H^1.
\end{array}\right.
$$

Now, we can give our main results of this paper, which indicate the global existence and large-time behavior of strong solution to the system $\eqref{MHD}$-$\eqref{ydxw}$ in the exterior of a simply connected bounded smooth domain $D$ in $\mathbb{R}^3$.
\begin{theorem}
Let $\Omega$ be the exterior of a simply connected bounded domain $D$ in $\mathbb{R}^3$ and its boundary $\partial\Omega$ is smooth. For constants $\bar{\rho}>0, q \in(3, \infty)$, $p\in[1,12/7)$, assume that the initial data $\left(\rho_0, m_0, B_0\right)$ satisfy
\begin{equation}\label{cztj}
\begin{cases}
0 \leqslant \rho_0 \leqslant \bar{\rho}, \quad \rho_0 \in L^{3 / 2} \cap D^1, \quad \nabla \mu(\rho_0) \in L^q, \\
u_0 \in D_{0, \sigma}^1, \quad B_0 \in L^{p} \cap H_{0, \sigma}^1, \quad m_0=\rho_0 u_0 .
\end{cases}
\end{equation}
Then for
$$
\underline{\mu} \triangleq \min _{\rho \in[0, \bar{\rho}]} \mu(\rho), \quad \bar{\mu} \triangleq \max _{\rho \in[0, \bar{\rho}]} \mu(\rho), \quad M \triangleq\|\nabla \mu(\rho_0)\|_{L^q},
$$
there exists some small positive constant $\varepsilon_0$ depending only on $p, q, \bar{\rho}, \underline{\mu}, \bar{\mu}, \nu, \Omega, \|\rho_0\|_{L^{3 / 2}},\\
\|B_0\|_{L^{p}}$ and $M$ such that if
\begin{align}\label{du0b}
\|\nabla u_0\|_{L^2}+\|\nabla B_0\|_{L^2} \leqslant \varepsilon_0,
\end{align}
the system $\eqref{MHD}$-$\eqref{ydxw}$ admits a unique global strong solution $(\rho, u, B, P)$ satisfying that for any $0<\tau<T<\infty$, and any $p\in[2,s)$ with $s \triangleq min\{6,q\}$,
\begin{equation}\label{jdxw}
\begin{cases}
0 \leqslant \rho \in C([0, T] ; L^{3 / 2} \cap H^1)\cap L^\infty(\Omega\times(0,T)),\quad \nabla \mu(\rho) \in C([0, T] ; L^q), \\
\nabla u \in L^\infty(0,T;L^2)\cap L^\infty(\tau,T;W^{1,s})\cap C([\tau,T];H^1\cap W^{1,p}),\\
P \in L^\infty(\tau,T;W^{1,s})\cap C(\tau,T;H^1\cap W^{1,p}),\\
B \in L^{\infty}(\tau, T ; H^2) \cap L^2(\tau, T ; D^1\cap D^2) \cap C([\tau, T] ; H^2),\\
\rho^{1/2}u_t \in L^2(0,T;L^2)\cap L^\infty(\tau,T;L^2), \quad P_t \in L^2(\tau, T ; L^2\cap L^s),\\
B_t \in L^2(\tau,T;H^2)\cap L^\infty(\tau,T;H^1),\quad B_{tt} \in L^2(\tau,T;L^2),\\
\nabla B_t \in L^{\infty}(\tau, T ; L^2) \cap L^2(\tau, T ; H^1),\\
\nabla u_t \in L^{\infty}(\tau, T ; L^2) \cap L^2(\tau, T ; L^s),\quad (\rho u_t)_t\in L^2(\tau,T;L^2).
\end{cases}
\end{equation}
Moreover, it holds that
\begin{align}\label{dmidj}
\sup _{0 \leqslant t<\infty}\|\nabla \rho\|_{L^2} \leqslant 2\|\nabla \rho_0\|_{L^2}, \quad\sup _{0 \leqslant t<\infty}\|\nabla \mu(\rho)\|_{L^q} \leqslant 2\|\nabla \mu(\rho_0)\|_{L^q},
\end{align}
and that there exists some positive constant $C$ depending only on $\delta, p, q, \bar{\rho}, \underline{\mu}, \bar{\mu}, \nu, \Omega,$ $\|\rho_0\|_{L^{3 / 2} \cap D^1}, \|B_0\|_{L^{p}}$ and $M$ such that for all $t \geqslant 1$, for any $0<\delta<1/8$,
\begin{equation}\label{dutgui}
\begin{cases}
t^{\frac{6}{p}-\delta}\|\nabla u_t(\cdot, t)\|_{L^2}^2+t^{\frac{12-3p}{2p}-\delta}\|\nabla u(\cdot, t)\|_{H^1\cap W^{1,s}}^2+t^{\frac{12-3p}{2p}-\delta}\|P(\cdot, t)\|_{H^1\cap W^{1,s}}^2\leqslant C,\\
t^{\frac{6+3p}{2p}-\delta}\|\nabla B_t(\cdot, t)\|_{L^2}^2+t^{\frac{6+p}{2p}-\delta}\|B_t(\cdot, t)\|_{L^2}^2+t^{\frac{6-p}{2p}-\delta}\|\nabla B(\cdot, t)\|_{H^1}^2+t^{\frac{6-3p}{2p}}\|B(\cdot, t)\|_{L^2}^2 \leqslant C .
\end{cases}
\end{equation}
\end{theorem}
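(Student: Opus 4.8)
\section*{Proof proposal}

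The plan is to combine a local existence result with global-in-time a priori estimates and a continuation argument. Local existence and uniqueness of a strong solution on a maximal interval $[0,T^*)$ in the class \eqref{jdxw} follows from the linearization–iteration scheme of Song \cite{Son2018} for the density-dependent viscous MHD system allowing vacuum; hence it suffices to prove that, under \eqref{du0b}, all the norms in \eqref{jdxw} stay bounded on $[0,T]$ by constants independent of $T$, which forces $T^*=\infty$. The backbone is a bootstrap: introduce the functional
\[
\mathcal{A}(T)\triangleq \sup_{0\le t\le T}\big(\|\nabla u\|_{L^2}^2+\|\nabla B\|_{L^2}^2\big)+\int_0^T\big(\|\sqrt{\rho}\,u_t\|_{L^2}^2+\|\nabla^2 u\|_{L^2}^2+\|B_t\|_{L^2}^2+\|\nabla^2 B\|_{L^2}^2\big)\,dt ,
\]
supplemented by the decay-carrying quantities built on $\|B\|_{L^p\cap L^2}$, and show that the a priori hypothesis $\mathcal{A}(T)\le 2\varepsilon_0^{1/2}$ can be improved to $\mathcal{A}(T)\le\varepsilon_0^{1/2}$ once $\|\nabla u_0\|_{L^2}+\|\nabla B_0\|_{L^2}\le\varepsilon_0$ with $\varepsilon_0$ small in the indicated parameters.

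First I would record the basic estimates: the transport equation gives $0\le\rho(\cdot,t)\le\bar\rho$, $\|\rho(t)\|_{L^r}=\|\rho_0\|_{L^r}$ for every $r$, and differentiating it yields $\frac{d}{dt}\|\nabla\mu(\rho)\|_{L^q}\le C\|\nabla u\|_{L^\infty}\|\nabla\mu(\rho)\|_{L^q}$ and likewise for $\|\nabla\rho\|_{L^2}$. Testing the momentum equation by $u$ and the induction equation by $B$ (using $\div u=\div B=0$ and the boundary conditions \eqref{ubj}–\eqref{bbj} to kill boundary terms) gives the energy balance for $\int(\rho|u|^2+|B|^2)$ with $\int_0^T(\underline\mu\|\nabla u\|_{L^2}^2+\nu\|\nabla B\|_{L^2}^2)\,dt$ controlled; the exterior-domain Sobolev inequality $\|u\|_{L^6}\lesssim\|\nabla u\|_{L^2}$ (valid on $D^1_{0,\sigma}$) is used throughout. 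The new ingredient is the decay of $\|B\|_{L^p}$: since $\div u=0$, the induction equation is a linear parabolic equation for $B$ with divergence-free drift $u$ and forcing $B\cdot\nabla u$, so Duhamel's formula against the Stokes (equivalently, heat) semigroup on $\Omega$ together with the smoothing bounds $\|e^{t\Delta}f\|_{L^2}\lesssim t^{-\frac32(\frac1p-\frac12)}\|f\|_{L^p}$ yields $\|B(t)\|_{L^2}\lesssim(1+t)^{-\frac{6-3p}{4p}}$ and the companion rates for $\|\nabla B\|_{L^2}$ and $\|B_t\|_{L^2}$; here the restriction $p<12/7$ is exactly what makes these rates strong enough for the later hierarchy.

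Next come the first-order estimates. Using $\div u=0$ to write $\div(2\mu(\rho)d)=\mu(\rho)\Delta u+2d\cdot\nabla\mu(\rho)$, I would test the momentum equation with $u_t$ and the induction equation with $\Delta B$ (or $B_t$), integrating by parts and absorbing the cross terms $\int B\cdot\nabla B\cdot u_t$, $\int d\cdot\nabla\mu(\rho)\cdot u_t$ and the convective terms via the smallness of $\|\nabla u\|_{L^2}+\|\nabla B\|_{L^2}$, the bound $M$, and Gronwall; the stationary Lamé/Stokes elliptic estimate with slip boundary condition \eqref{ubj} then converts $\|\sqrt\rho u_t\|_{L^2}$ and the magnetic forcing into $\|\nabla^2u\|_{L^s}$ and $\|\nabla P\|_{L^s}$. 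The crucial point is closing the loop for $\|\nabla\mu(\rho)\|_{L^q}$ and $\|\nabla\rho\|_{L^2}$: from the transport estimates above, \eqref{dmidj} holds as soon as $\int_0^T\|\nabla u\|_{L^\infty}\,dt$ is small, and this integral is split as $\int_0^1$, handled by $\sqrt t$-type time-weighted parabolic estimates near $t=0$, plus $\int_1^\infty$, handled by the decay rates of the previous paragraph bootstrapped to control $\|\nabla u\|_{L^\infty}$ through $\|\nabla^2 u\|_{L^s}$ and a logarithmic Sobolev/interpolation inequality on the unbounded domain. I expect this closing of the $\|\nabla\mu(\rho)\|_{L^q}$–$\|\nabla u\|_{L^\infty}$ coupling to be the main obstacle, because it simultaneously uses elliptic regularity for the non-constant-coefficient operator, a borderline $L^\infty$ interpolation on an exterior domain, and the whole time-decay budget.

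Finally, with the global bounds and \eqref{dmidj} established, I would run the time-weighted higher-order estimates: multiplying the equations for $u_t$ and $B_t$ by suitable powers $t^{\alpha}$ and integrating, then feeding the decay of $\|B\|_{L^2}$ up the chain, yields the rates \eqref{dutgui} for $\|\nabla u_t\|_{L^2}$, $\|\nabla u\|_{H^1\cap W^{1,s}}$, $\|P\|_{H^1\cap W^{1,s}}$, $\|\nabla B_t\|_{L^2}$, $\|B_t\|_{L^2}$, $\|\nabla B\|_{H^1}$ and $\|B\|_{L^2}$, with the $\delta>0$ loss absorbing the endpoint interpolation exponents. Uniqueness follows from a Gronwall argument on the difference of two solutions in a low-regularity norm, using the a priori bounds already in hand, and the continuity-in-time assertions in \eqref{jdxw} follow by the standard Aubin–Lions/Lions–Magenes machinery. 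The continuation argument then closes the proof: the uniform bounds prevent blow-up at any finite $T^*$, so $T^*=\infty$.
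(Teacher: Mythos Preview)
Your overall architecture—local existence from \cite{Son2018}, a bootstrap/continuation argument driven by smallness of $\|\nabla u_0\|_{L^2}+\|\nabla B_0\|_{L^2}$, control of $\|\nabla\mu(\rho)\|_{L^q}$ via $\int_0^T\|\nabla u\|_{L^\infty}\,dt$, and a time-weighted hierarchy for the decay rates—matches the paper. There are, however, two substantive divergences worth flagging.

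\textbf{The decay of $B$.} You obtain the $L^p\!\to\! L^2$ decay of $B$ by Duhamel against the heat/Stokes semigroup on $\Omega$ with the smoothing bound $\|e^{t\Delta}f\|_{L^2}\lesssim t^{-\frac32(\frac1p-\frac12)}\|f\|_{L^p}$. This is exactly the step the paper singles out as the main novelty and where it \emph{departs} from the Cauchy-problem treatment of Zhang \cite{Zha2020}: on the exterior domain with the perfect-conductor conditions $B\cdot n=0$, $\operatorname{curl}B\times n=0$, the authors do not appeal to fundamental-solution/semigroup estimates at all. Instead they use a pure energy argument: multiply $\eqref{MHD}_3$ by $B|B|^{p-2}$ and invoke the pointwise inequality of Lemma~2.7,
\[
-\int_\Omega \Delta B\cdot B|B|^{p-2}\,dx \;\ge\; \frac{2(p-1)}{p^2}\int_\Omega\big|\nabla|B|^{p/2}\big|^2\,dx - C\|\nabla B\|_{L^2}^p,
\]
whose boundary term is handled by extending $n$ compactly and using a trace/absorption trick. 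This yields $\sup_t\|B\|_{L^p}^p+\int_0^T\|\nabla|B|^{p/2}\|_{L^2}^2\,dt\le C$, and interpolation then gives $\int_0^T\|B\|_{L^2}^{4p/(6-3p)}\,dt\le C$, from which $t^{\frac{6-3p}{2p}}\|B\|_{L^2}^2\le C$ follows by a weighted Gr\"onwall. Your Duhamel route is not obviously wrong, but you would need $L^p$--$L^2$ smoothing for the relevant semigroup on an exterior domain with these boundary conditions and a closure for the nonlinear forcing $B\cdot\nabla u-u\cdot\nabla B$; none of this is standard, and the paper's energy method sidesteps it entirely.

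\textbf{The bootstrap quantities.} Your functional $\mathcal A(T)$ is not what the paper bootstraps. Proposition~3.1 assumes and improves the three specific bounds
\[
\int_0^T\|\nabla u\|_{L^\infty}\,dt\le \tfrac{\ln 4}{q},\qquad \int_0^T\|\nabla B\|_{L^2}^p\,dt\le 2,\qquad \int_0^T\|\nabla u\|_{L^2}^4\,dt\le 2(\|\nabla u_0\|_{L^2}^2+\|\nabla B_0\|_{L^2}^2),
\]
to the same inequalities with $\tfrac{\ln 2}{q}$, $1$, and $1$ times the data, respectively. The first is tailored so that Gr\"onwall on $\frac{d}{dt}\|\nabla\mu(\rho)\|_{L^q}\le q\|\nabla u\|_{L^\infty}\|\nabla\mu(\rho)\|_{L^q}$ immediately gives the factor $2$ in \eqref{dmidj}; the second feeds the $L^p$-energy estimate for $B$ above; the third closes the Gr\"onwall loops in the $\|\nabla B\|_{L^2}$ and $\|\nabla u\|_{L^2}$ estimates. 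The closure of $\int_0^T\|\nabla u\|_{L^\infty}$ uses the Stokes estimate (Lemma~2.6) to bound $\|\nabla^2 u\|_{L^r}$ for a fixed $r\in(3,\min\{q,6\})$ and then $\|\nabla u\|_{L^\infty}\le C\|\nabla u\|_{L^2}+C\|\nabla^2 u\|_{L^r}$; no logarithmic inequality is needed. Your $\mathcal A(T)$ does not by itself control $\int_0^T\|\nabla u\|_{L^\infty}$, so as written the closure of \eqref{dmidj} in your scheme is the gap you yourself identify; the paper's choice of bootstrap quantities is precisely what resolves it.
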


Moreover, if the viscosity $\mu(\rho)=\mu$ is a positive constant, we can also use our method to obtain the following global existence and large-time behavior of the strong solutions.
\begin{theorem}
For constants $\bar{\rho}>0$ and $\mu>0$, assume that $\mu(\rho) \equiv \mu$ in $\eqref{MHD}$ and the initial data $(\rho_0, u_0, B_0)$ satisfy $\eqref{cztj}$ except $B_0 \in L^{p}$. Then, there exists a unique global strong solution to the system $\eqref{MHD}$-$\eqref{ydxw}$ satisfying $\eqref{jdxw}$  with $s=6$ provided $\|\nabla u_0\|_{L^2}+\|\nabla B_0\|_{L^2} \leqslant \mu \varepsilon$. Moreover, it holds that
\begin{align}\label{cmgj}
\sup _{0 \leqslant t<\infty}\|\nabla \rho\|_{L^2} \leqslant 2\|\nabla \rho_0\|_{L^2},
\end{align}
and that there exists some positive constant C depending only on $\bar{\rho}, \mu, \nu,\|\rho_0\|_{L^{3 / 2}\cap D^1}$ and $\|B_0\|_{L^2}$  such that for all $t \geqslant 1$,
\begin{equation}\label{gdutgu}
\begin{cases}
t^{3}\|\nabla u_t(\cdot, t)\|_{L^2}^2+t^{\frac{3}{2}}\|\nabla u(\cdot, t)\|_{H^1\cap W^{1,6}}^2+t^{\frac{3}{2}}\|P(\cdot, t)\|_{H^1\cap W^{1,6}}^2\leqslant C,\\
t^3\|\nabla B_t(\cdot, t)\|_{L^2}^2+t^2\|B_t(\cdot, t)\|_{L^2}^2+t\|\nabla B(\cdot, t)\|_{H^1}^2 \leqslant C .
\end{cases}
\end{equation}
\end{theorem}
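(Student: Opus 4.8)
The plan is to follow the same scheme as the proof of Theorem 1.1, taking advantage of the simplifications afforded by a constant viscosity. When $\mu(\rho)\equiv\mu$, the identity $\div(2\mu d)=\mu\Delta u+\mu\nabla\div u=\mu\Delta u$ (using $\div u=0$) turns the momentum equation of $\eqref{MHD}$ into $(\rho u)_t+\div(\rho u\otimes u)-\mu\Delta u+\nabla P=B\cdot\nabla B$, so the density no longer enters the dissipation. Consequently every commutator of the form $\nabla\mu(\rho)\cdot(\cdots)$ occurring in the higher-order estimates of Theorem 1.1 disappears, the evolution bound for $\|\nabla\mu(\rho)\|_{L^q}$ is no longer needed, and the only transport bound required on the density is $\eqref{cmgj}$. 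The three-step structure is then: (i) the local existence of a unique strong solution with the regularity $\eqref{jdxw}$ (with $s=6$) on a maximal interval $[0,T^*)$, the local theory being the same as for Theorem 1.1; (ii) the a priori estimates described below, valid once $\|\nabla u_0\|_{L^2}+\|\nabla B_0\|_{L^2}\leqslant\mu\varepsilon$ with $\varepsilon$ small; (iii) $T^*=\infty$ by the continuation criterion.

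For the a priori estimates, since $\div u=0$ the density is purely transported, hence $0\leqslant\rho\leqslant\bar\rho$; differentiating the transport equation and using Gronwall gives $\sup_t\|\nabla\rho\|_{L^2}\leqslant 2\|\nabla\rho_0\|_{L^2}$ provided $\int_0^T\|\nabla u\|_{L^\infty}$ turns out to be small. The basic energy identity — testing the momentum equation by $u$ and the equation for $B$ by $B$, using the cancellation $\int(B\cdot\nabla B)\cdot u+\int(B\cdot\nabla u-u\cdot\nabla B)\cdot B=0$ after integration by parts with $\eqref{ubj}$, $\eqref{bbj}$, and Korn's inequality to pass from $\|d\|_{L^2}$ to $\|\nabla u\|_{L^2}$ — yields $\sup_t(\|\sqrt\rho u\|_{L^2}^2+\|B\|_{L^2}^2)+\int_0^T(\mu\|\nabla u\|_{L^2}^2+\nu\|\nabla B\|_{L^2}^2)\leqslant C_0$. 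Next, testing the momentum equation by $u_t$ and the equation for $B$ by $-\Delta B$ and by $B_t$, combined with the Stokes elliptic estimate on $\Omega$ bounding $\|\nabla^2u\|_{L^2}+\|\nabla P\|_{L^2}$ by $\|\rho u_t+\rho u\cdot\nabla u-B\cdot\nabla B\|_{L^2}$, the analogous estimate $\|\nabla^2B\|_{L^2}\lesssim\|\Delta B\|_{L^2}$ under $\eqref{bbj}$, the embeddings $\|f\|_{L^6}\lesssim\|\nabla f\|_{L^2}$ valid on $\Omega$, and the smallness of $\|\nabla u_0\|_{L^2}$, $\|\nabla B_0\|_{L^2}$ together with $C_0$ to absorb the cubic terms, closes a bound keeping $\sup_t(\|\nabla u\|_{L^2}^2+\|\nabla B\|_{L^2}^2)$ comparable to $\mu\varepsilon$ and controlling $\int_0^T(\|\sqrt\rho u_t\|_{L^2}^2+\|\nabla^2u\|_{L^2}^2+\|B_t\|_{L^2}^2+\|\nabla^2B\|_{L^2}^2)$; in particular $\int_0^T\|\nabla u\|_{L^\infty}$ is small, so the density estimate is consistent. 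The remaining regularity in $\eqref{jdxw}$ on any $(\tau,T)$ — the quantities $\|\nabla u_t\|_{L^2}$, $\|\nabla u\|_{H^1\cap W^{1,6}}$, $\|P\|_{H^1\cap W^{1,6}}$, $\|\nabla B_t\|_{L^2}$, $\|B_{tt}\|_{L^2}$, and so on — follows by differentiating the equations in $t$, testing against $u_{tt}$ and $B_{tt}$, multiplying by suitable powers of $t$, and absorbing via the preceding small bounds.

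The decay estimates $\eqref{gdutgu}$ are obtained exactly as in Theorem 1.1 with $p=2$. One first upgrades the basic energy to algebraic decay: since $m_0=\rho_0u_0\in L^{6/5}$ (because $\rho_0\in L^{3/2}$ and $u_0\in D^1_{0,\sigma}\subset L^6$), the $L^2$–$L^{6/5}$ decay of the Stokes semigroup on the exterior domain, Duhamel's formula, and the smallness bounds combine to give an algebraic decay rate for $\|\sqrt\rho u(t)\|_{L^2}$ and hence, by parabolic smoothing, for $\|\nabla u(t)\|_{L^2}$, whereas $\|B(t)\|_{L^2}$ stays merely bounded since only $B_0\in L^2$ is available — which is precisely why no positive power of $t$ multiplies $\|B\|_{L^2}^2$ in $\eqref{gdutgu}$, in contrast with $\eqref{dutgui}$. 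Feeding these rates into time-weighted versions of the first- and higher-order estimates (with the weights $t,t^{3/2},t^2,t^3$ read off from $\eqref{gdutgu}$, and using parabolic smoothing such as $\|\nabla B(t)\|_{L^2}^2\lesssim t^{-1}\|B(t)\|_{L^2}^2+\cdots$) and iterating produces $\eqref{gdutgu}$; the $\delta$-loss present in $\eqref{dutgui}$ originates from the $\nabla\mu(\rho)$-terms, so in the constant-viscosity case the clean exponents survive. The main obstacle is exactly this last bootstrap: the time weights must be chosen so that every nonlinear contribution is dominated either by the dissipation or by an already-controlled quantity with a strictly faster rate, and the exterior-domain geometry rules out the usual Poincaré inequality, forcing one to substitute the $L^{3/2}$-bound on $\rho$ and the explicit semigroup decay; moreover, keeping the constant $C$ in $\eqref{gdutgu}$ independent of $T$ requires all these absorptions to be uniform in $T$, which is the delicate point.
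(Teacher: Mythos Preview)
Your overall strategy---local existence, a priori estimates under smallness, continuation---and the simplifications you identify when $\mu(\rho)\equiv\mu$ (disappearance of the $\nabla\mu(\rho)$-commutators, Stokes regularity with $s=6$, no need to track $\|\nabla\mu(\rho)\|_{L^q}$) are in line with the paper's very brief proof, which simply says to modify Lemmas~3.2--3.9 slightly.

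There is, however, a genuine gap in the decay step. You propose to obtain algebraic decay of $\|\sqrt\rho\,u(t)\|_{L^2}$ from the $L^2$--$L^{6/5}$ decay of the Stokes semigroup on the exterior domain together with Duhamel's formula. But the initial density is allowed to contain vacuum: the momentum equation $\rho u_t-\mu\Delta u+\nabla P=-\rho u\cdot\nabla u+B\cdot\nabla B$ cannot be rewritten as $u_t=-Au+\cdots$ for the Stokes operator $A$, since one cannot divide by $\rho$, and hence there is no Duhamel representation for $u$ (or for $\sqrt\rho\,u$). The observation $m_0=\rho_0 u_0\in L^{6/5}$ is correct but does not rescue the argument. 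The paper's mechanism is entirely different and is already set up in Section~3: it is purely energy-based time-weighting. One starts from Lemma~3.3 (which uses only $B_0\in L^2$) to get $\sup_t t\|\nabla B\|_{L^2}^2+\int_0^T t(\|B_t\|_{L^2}^2+\|\Delta B\|_{L^2}^2)\,dt\leqslant C$, and then runs the time-weighted versions of Lemmas~3.5--3.9 with the $\nabla\mu(\rho)$-terms deleted and with the weights $t,\,t^{3/2},\,t^2,\,t^3$ of $\eqref{gdutgu}$; no semigroup decay is invoked anywhere.

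A smaller inaccuracy: your phrase ``exactly as in Theorem~1.1 with $p=2$'' is only formal (the exponents do match if one sets $p=2$), since $p=2$ lies outside the range $[1,12/7)$ and the $L^p$-bootstrap of Lemma~3.4 is simply unavailable when only $B_0\in L^2$ is assumed. Relatedly, the $\delta$-loss in $\eqref{dutgui}$ enters directly in that bootstrap (through the integrability requirement $\int_1^\infty t^{-1-\delta}\,dt<\infty$ after using the $\|B\|_{L^2}$-decay of~\eqref{usj}), not in any $\nabla\mu(\rho)$-commutator; the constancy of $\mu$ matters only in that it lets the scheme close without ever invoking Lemma~3.4.
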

\begin{remark}
By comparison, we find that the decay rates for the solution $\|B\|_{L^2}$ and $\|\nabla B\|_{L^2}$ are the same as those for the linearized Navier-Stokes system (see \cite[Theorem 1.3]{TK1999}) and the heat equation (see \cite{Tre1975}). Therefore, these decay estimates are optimal.

Let's explain the range of $p$, for technical reasons, we need $\|B\|_{L^2}^{1/2}\|\nabla B\|_{L^2}^{3/2}\in L^1(0,T)$. 
Thanks to article \cite[Theorem 1.3]{TK1999}, roughly speaking, for the initial condition $B_0\in L^p$, 
the decay of $\|B\|_{L^2}^{1/2}\|\nabla B\|_{L^2}^{3/2}$ is $t^{-\frac{12-3p}{4p}}$, let $\frac{12-3p}{4p}>1$ we can get $p<\frac{12}{7}$.
\end{remark}
\begin{remark}
Compared with the results of Zhang \cite{Zha2020}, where he obtained the global strong solutions for the system $\eqref{MHD}$ on the Cauchy problem, we not only increase the algebraic decay rate of $\|\nabla u_t\|_{L^2}$, $\|\nabla u\|_{H^1\cap W^{1,s}}$, $\|P\|_{H^1\cap W^{1,s}}$, $\|\nabla B_t\|_{L^2}$ and $\|\nabla^2 B\|_{L^2}$,  but also get the new decay of $\|B\|_{L^2}$.
\end{remark}

\begin{remark}
The results obtained above hold for $\eqref{MHD}$ with the boundary conditions $B\cdot n=0, \curl B\times n=0$ on $\partial\Omega$ replaced by $B=0$ on $\partial\Omega$.
\end{remark}

We now make some comments on the analysis in this paper. The idea mainly comes from the article Cai-L\"u-Peng \cite{Cai2022} and Zhang \cite{Zha2020}. In this paper, 
the main difficulty is to obtain the suitable decay estimates of $B$ for exterior problems. Compared to the results by Zhang \cite{Zha2020}, instead of using the fundamental solutions of parabolic equations for Cauchy problem, 
on exterior domain we use method of energy integration $($see Lemma 2.7$)$ to obtain the decay of $B$.

The rest of this paper is organized as follows. Some facts and elementary inequalities are collected in Section 2. Section 3 is devoted to deriving a priori estimates of the system (1.1). Finally, we will give the proofs of Theorems 1.1 and 1.2 in Section 4.
\section{Auxiliary lemma}
In this section, we list some auxiliary lemmas which will be used later.

First of all, we start with the local existence of strong solutions which has been proved in \cite{Son2018}.
\begin{lemma}
Assume that $(\rho_0, u_0, B_0)$ satisfies $\eqref{cztj}$ except $B_0 \in L^{p}$. Then there exists a small time $T_0>0$ such that the problem $\eqref{MHD}$-$\eqref{ydxw}$ has a unique strong solution $(\rho, u, B, P)$ in $\Omega \times(0, T_0)$ satisfying $\eqref{jdxw}$.
\end{lemma}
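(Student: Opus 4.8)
The statement is precisely the local existence theorem of \cite{Son2018}, so rather than reprove it I outline the scheme I would follow. Observe first that the regularity class $\eqref{jdxw}$ asks for the ``parabolic'' bounds on $\nabla u$, $\nabla u_t$, $B$, $B_t$, $P$ only on intervals $[\tau,T]$ with $\tau>0$, while at $t=0$ one is given merely the energy-level data $u_0\in D^1_{0,\sigma}$, $B_0\in H^1_{0,\sigma}$, together with $0\le\rho_0\le\bar\rho$, $\rho_0\in L^{3/2}\cap D^1$ and $\nabla\mu(\rho_0)\in L^q$. Hence no higher-order compatibility condition on the data is needed; the missing regularity is recovered by time-weighted estimates, and the possible vacuum is dealt with at the level of $\rho^{1/2}u_t$.

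The plan is the classical linearization-and-iteration argument for nonhomogeneous incompressible flows with vacuum. After regularizing the data (in particular so that $\rho_0$ is smooth and, on compact sets, bounded below, a restriction lifted at the end by a limiting argument), set $(\rho^0,u^0,B^0)=(\rho_0,u_0,B_0)$ and, given $(\rho^k,u^k,B^k)$, define $(\rho^{k+1},u^{k+1},B^{k+1})$ by successively solving: the linear transport equation $\rho^{k+1}_t+\div(\rho^{k+1}u^k)=0$, $\rho^{k+1}|_{t=0}=\rho_0$, which (since $\div u^k=0$) conserves $\|\rho^{k+1}\|_{L^{3/2}}$ and the pointwise bounds, and propagates $\nabla\rho^{k+1}\in L^2$ and $\nabla\mu(\rho^{k+1})\in L^q$ on a short interval by differentiating along characteristics; the linear Stokes-type system with variable viscosity
$$\rho^{k+1}u^{k+1}_t+\rho^{k+1}u^k\cdot\nabla u^{k+1}-\div\!\big(2\mu(\rho^{k+1})d^{k+1}\big)+\nabla P^{k+1}=B^k\cdot\nabla B^k,\quad \div u^{k+1}=0,$$
under $u^{k+1}\cdot n=0$, $(d^{k+1}\cdot n)_{tan}=0$ on $\partial\Omega$ and $u^{k+1}\to0$ at infinity; and the linear parabolic problem $B^{k+1}_t+u^k\cdot\nabla B^{k+1}-B^{k+1}\cdot\nabla u^k-\nu\Delta B^{k+1}=0$, $\div B^{k+1}=0$, with $B^{k+1}\cdot n=0$, $\curl B^{k+1}\times n=0$ on $\partial\Omega$.

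The core step is to show that for some $T_0>0$ independent of $k$ the iterates stay in a fixed bounded subset of the space $\eqref{jdxw}$ and form a Cauchy sequence in the weaker norm $C([0,T_0];L^2)\times\big(L^2(0,T_0;L^2)\big)^2$; passing to the limit then yields a strong solution, the regularization is removed, and uniqueness follows from an $L^2$-estimate for the difference of two solutions (controlling the density difference via the transport equation). The uniform bounds rest on three ingredients: the transport estimates above; $L^r$-regularity $(1<r\le q)$ for the Lam\'e/Stokes operator with $W^{1,q}$-coefficient $\mu(\rho^{k+1})$ on the exterior domain under the slip boundary conditions --- here $q>3$ is exactly what makes $\mu(\rho^{k+1})\in W^{1,q}\hookrightarrow C^{0,1-3/q}$, so a frozen-coefficient perturbation argument closes --- together with the de Rham/Bogovskii theory on $\Omega$ to recover $\nabla P^{k+1}$; and parabolic regularity for $B^{k+1}$, using $\|\nabla B\|_{L^r}\lesssim\|\curl B\|_{L^r}+\|\div B\|_{L^r}$ valid under $\eqref{bbj}$ and the associated $H^2$-estimate. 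The velocity and magnetic energy estimates, and their time-differentiated versions, are multiplied by suitable powers of $t$ to absorb the low-regularity data and the vacuum; in particular $u_t$ is estimated only through $\rho^{1/2}u_t$, and the term $\int B\cdot\nabla B\cdot u_t$ is handled by integration by parts in $t$ since $B$ does not vanish on the vacuum set.

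The main obstacle is the interaction of vacuum with the variable viscosity on an unbounded domain: the momentum equation cannot be divided by $\rho$, forcing all time-derivative estimates to be weighted and routed through $\rho^{1/2}u_t$, while simultaneously the variable-coefficient Stokes estimates with slip boundary conditions must be established on the exterior domain in the homogeneous Sobolev setting, with the correct decay at infinity and the right mapping properties of the Bogovskii operator. This is the delicate analytic input supplied by \cite{Son2018}, which I would invoke rather than redo.
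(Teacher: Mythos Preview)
Your proposal is correct and matches the paper's approach exactly: the paper does not prove this lemma at all but simply records it as a citation to \cite{Son2018}. Your additional outline of the linearization--iteration scheme with time-weighted estimates is a reasonable sketch of what that reference contains, but goes beyond what the paper itself provides.
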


Next, the following well-known Gagliardo-Nirenberg inequality will be used later frequently (see \cite{Ada2003,Lad1968}).
\begin{lemma}\textbf{(Gagliardo-Nirenberg)} 
Assume that $\Omega$ is the exterior of a simply connected domain $D$ in $\mathbb{R}^3$. For $p \in[2,6], q\in(1, \infty)$ and $r \in(3, \infty)$, there exists some generic constant $C>0$ which may depend on $p, q$ and $r$ such that for any $f \in H^1(\Omega)$ and $g \in L^q(\Omega) \cap D^{1, r}(\Omega)$, we have
\begin{gather}
\|f\|_{L^p(\Omega)} \leq C\|f\|_{L^2}^{\frac{6-p}{2 p}}\|\nabla f\|_{L^2}^{\frac{3 p-6}{2 p}},\label{G1}\\
\|g\|_{C(\bar{\Omega})} \leq C\|g\|_{L^q}^{q(r-3) /(3 r+q(r-3))}\|\nabla g\|_{L^r}^{3 r /(3 r+q(r-3))}.\label{G2}
\end{gather}
\end{lemma}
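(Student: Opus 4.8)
The plan is to reduce both estimates to the classical Gagliardo--Nirenberg inequalities on the whole space $\mathbb{R}^3$ by means of a bounded extension operator, which is available because $\partial\Omega=\partial D$ is a smooth bounded hypersurface. Concretely, fix $R>0$ with $\bar D\subset B_R$ and a cut-off $\chi\in C_0^\infty(B_{2R})$ equal to $1$ on $B_R$; the extension $E$ acts as the identity on $\Omega\setminus B_{2R}$ and, on the collar $\Omega\cap B_{2R}$ together with the obstacle $D$, is a trace-matching extension of the restricted function. The one point that needs attention is that $E$ must be arranged so that the \emph{homogeneous} seminorms are preserved up to a constant, i.e. $\|\nabla Ef\|_{L^2(\mathbb{R}^3)}\le C\|\nabla f\|_{L^2(\Omega)}$ and $\|\nabla Eg\|_{L^r(\mathbb{R}^3)}\le C\|\nabla g\|_{L^r(\Omega)}$; this is achieved by subtracting a local average of the function over the collar before multiplying by $\chi$, so that no lower-order term ($\|f\|_{L^2}$ or $\|g\|_{L^q}$) is produced by differentiating the cut-off.

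For \eqref{G1}, I would first use the elementary interpolation inequality (log-convexity of $L^s$-norms): for $p\in[2,6]$ one has $\tfrac1p=\tfrac{1-\theta}{2}+\tfrac{\theta}{6}$ with $\theta=\tfrac{3p-6}{2p}$, hence
$$\|f\|_{L^p(\Omega)}\le\|f\|_{L^2(\Omega)}^{\frac{6-p}{2p}}\,\|f\|_{L^6(\Omega)}^{\frac{3p-6}{2p}},$$
so it remains to prove the endpoint Sobolev inequality $\|f\|_{L^6(\Omega)}\le C\|\nabla f\|_{L^2(\Omega)}$. By a routine truncation ($f_N=\max(\min(f,N),-N)$, which satisfies $|f_N|\le|f|$, $|\nabla f_N|\le|\nabla f|$ and lies in $L^2\cap L^\infty\subset L^6$) we may assume $f\in L^6(\Omega)$; then $\tilde f=Ef$ belongs to $\dot H^1(\mathbb{R}^3)$, the classical Sobolev inequality gives $\|\tilde f\|_{L^6(\mathbb{R}^3)}\le C\|\nabla\tilde f\|_{L^2(\mathbb{R}^3)}\le C\|\nabla f\|_{L^2(\Omega)}$, and restricting back together with Fatou in $N$ yields the claim.

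For \eqref{G2}, note first that since $g\in D^{1,r}(\Omega)$ with $r>3$, Morrey's embedding and the smoothness of $\partial\Omega$ give a representative $g\in C(\bar\Omega)$ (which moreover tends to $0$ at infinity because $g\in L^q$ and $\nabla g\in L^r$). Extend $g$ to $\tilde g=Eg$ on $\mathbb{R}^3$ with $\|\tilde g\|_{L^q(\mathbb{R}^3)}\le C\|g\|_{L^q(\Omega)}$ and $\|\nabla\tilde g\|_{L^r(\mathbb{R}^3)}\le C\|\nabla g\|_{L^r(\Omega)}$. Applying the dilation $\tilde g(\cdot)\mapsto\tilde g(\cdot/\lambda)$ to a putative inequality $\|\tilde g\|_{L^\infty}\le C\|\tilde g\|_{L^q}^{a}\|\nabla\tilde g\|_{L^r}^{1-a}$ forces $a=\tfrac{q(r-3)}{3r+q(r-3)}$ and $1-a=\tfrac{3r}{3r+q(r-3)}$; and this inequality does hold on $\mathbb{R}^3$ --- it follows from Morrey's embedding $W^{1,r}(\mathbb{R}^3)\hookrightarrow L^\infty(\mathbb{R}^3)$ (valid for $r>3$) combined with an $L^q$--$L^\infty$ interpolation and optimization over $\lambda$. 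Since $\|g\|_{C(\bar\Omega)}\le\|\tilde g\|_{L^\infty(\mathbb{R}^3)}$, \eqref{G2} follows.

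The only genuinely delicate ingredient is the construction of the extension operator with gradient bounds that are free of lower-order norms, so that the resulting inequalities are truly homogeneous in the derivatives; this is classical for exterior domains with smooth boundary and is exactly what underlies the references \cite{Ada2003,Lad1968}, so in a write-up I would either cite those directly or carry out the averaged cut-off construction indicated above. Everything else is the standard Euclidean Gagliardo--Nirenberg inequality together with Hölder's inequality.
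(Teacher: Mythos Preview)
Your proof sketch is correct. The paper itself does not supply a proof of this lemma; it simply states the inequalities as ``well-known'' and cites \cite{Ada2003,Lad1968}. Your route---building a mean-corrected extension operator across the bounded obstacle so that the \emph{homogeneous} gradient seminorms are preserved, reducing to the classical Sobolev and Gagliardo--Nirenberg inequalities on $\mathbb{R}^3$, and then reading off the exponents in \eqref{G2} by scaling---is exactly the standard way to transfer these estimates to exterior domains, and you have correctly identified the one delicate point (avoiding a lower-order $\|f\|_{L^2}$ or $\|g\|_{L^q}$ term when differentiating the cut-off). There is nothing further to compare against in the paper.
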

Generally, $\eqref{G1}$ and $\eqref{G2}$ are called Gagliardo-Nirenberg's inequalities.

The following two lemmas are given in \cite[Theorem 3.2]{Wahl1992} and \cite[Theorem 5.1]{Lou2016}.
\begin{lemma}
Assume that $\Omega$ is an exterior domain of some simply connected domain $\mathbb{D}$ in $\mathbb{R}^3$ with $C^{1,1}$ boundary. For $v \in D^{1, q}(\Omega)$ with $v \cdot n=0$ on $\partial \Omega$, it holds that
\begin{align}\label{dvgja}
\|\nabla v\|_{L^q(\Omega)} \leq C\left(\|\operatorname{div}v\|_{L^q(\Omega)}+\|\operatorname{curl} v\|_{L^q(\Omega)}\right) \text { for any } 1<q<3,
\end{align}
and
$$
\|\nabla v\|_{L^q(\Omega)} \leq C\left(\|\operatorname{div} v\|_{L^q(\Omega)}+\|\operatorname{curl} v\|_{L^q(\Omega)}+\|\nabla v\|_{L^2(\Omega)}\right) \text { for any } 3 \leq q<+\infty .
$$
\end{lemma}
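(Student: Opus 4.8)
\noindent\textit{Sketch of proof.} The plan is to reduce the estimate to a second order elliptic system and then localize, separating the contribution of the obstacle $\mathbb{D}$ from the behaviour at infinity. The starting point is the identity $-\lap v=\curl\curl v-\na\div v$, which exhibits $v$ as the solution of an elliptic system whose right-hand side is a first order expression in $\div v$ and $\curl v$. First I would fix a large ball $B_R\supset\mathbb{D}$, pick a cut-off $\zeta\in C_0^\infty(\mr^3)$ with $\zeta\equiv1$ near $\mathbb{D}$ and $\mathrm{supp}\,\zeta\subset B_R$, and split $v=\zeta v+(1-\zeta)v=:v_1+v_2$.

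For the far part $v_2$, extended by zero to $\mr^3$, one has $-\lap v_2=f_2$ on $\mr^3$, where $f_2$ collects $\div v$, $\curl v$ and the commutator terms built from $\na\zeta,\na^2\zeta$, all supported in the annulus $A:=B_R\setminus\mathbb{D}$; boundedness of the Riesz transforms on $L^q(\mr^3)$ for $1<q<\infty$ then gives $\|\na v_2\|_{L^q}\le C(\|\div v\|_{L^q}+\|\curl v\|_{L^q}+\|v\|_{L^q(A)}+\|\na v\|_{L^q(A)})$. For the near part $v_1$, which is supported in $\overline A$, I would view it as a field on the bounded smooth domain $B_{2R}\setminus\overline{\mathbb{D}}$, with $v_1\cdot n=0$ on $\p\mathbb{D}$ and $v_1\equiv0$ near $\p B_{2R}$, and invoke the classical bounded-domain div--curl inequality (itself a consequence of the Agmon--Douglis--Nirenberg $L^q$ theory for the elliptic system $(\div,\curl,\,\cdot\,n)$), which yields a bound of the same type. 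Adding the two and re-running the argument on two nested annuli so that the $\|\na v\|_{L^q(A)}$ contribution is absorbed, I expect to reach the localized inequality
\[
\|\na v\|_{L^q(\O)}\le C\big(\|\div v\|_{L^q(\O)}+\|\curl v\|_{L^q(\O)}+\|v\|_{L^q(A)}\big).
\]

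The hard part is removing the lower-order term $\|v\|_{L^q(A)}$, the only point where non-compactness of $\O$ really matters. For $1<q<3$ I would argue by contradiction: a sequence $v_k\in D^{1,q}(\O)$ with $v_k\cdot n=0$, $\|\na v_k\|_{L^q}=1$ and $\|\div v_k\|_{L^q}+\|\curl v_k\|_{L^q}\to0$ is, by the homogeneous Sobolev embedding $D^{1,q}(\O)\hookrightarrow L^{3q/(3-q)}(\O)$, bounded in $W^{1,q}$ on a slightly larger annulus, so Rellich gives $v_k\to v$ in $L^q(A)$ along a subsequence, and the localized bound forces $\|v\|_{L^q(A)}>0$. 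On the other hand the limit satisfies $\div v=\curl v=0$ in $\O$, $v\cdot n=0$ on $\p\O$, and $v\in L^{3q/(3-q)}(\O)$ so $v\to0$ at infinity; the topological hypothesis on $\mathbb{D}$ turns $\curl v=0$ into $v=\na\phi$ with $\lap\phi=0$, $\p_n\phi=0$ on $\p\O$ and $\na\phi\to0$ at infinity, whence $v\equiv0$ --- a contradiction. For $q\ge3$ the embedding $D^{1,q}\hookrightarrow L^{3q/(3-q)}$ is unavailable, so $\na v\in L^q$ alone gives no decay at infinity; this is exactly why the term $\|\na v\|_{L^2(\O)}$ must be kept on the right, and with it $v_k$ is bounded in $L^6(\O)$, decay is recovered, and the same compactness-and-uniqueness scheme closes the case. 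Since the elliptic $L^q$ estimates on the bounded piece and on $\mr^3$ are standard, essentially all the work sits in this compactness/Liouville step; this is the form in which the statement is established in \cite{Wahl1992,Lou2016}.
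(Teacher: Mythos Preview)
Your sketch is a reasonable outline of the standard localization-plus-compactness route to div--curl estimates on exterior domains, and the references you invoke at the end are exactly the ones the paper relies on. Note, however, that the paper does \emph{not} supply its own proof of this lemma: it simply records the statement and attributes it to \cite[Theorem~3.2]{Wahl1992} and \cite[Theorem~5.1]{Lou2016}. So there is no ``paper's proof'' to compare against; your proposal is in effect a sketch of what one expects those references to contain, and in that sense it is appropriate.

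A couple of minor points on the sketch itself. First, the absorption of the $\|\nabla v\|_{L^q(A)}$ term by ``re-running the argument on two nested annuli'' is a bit vague; in practice this term is usually handled together with $\|v\|_{L^q(A)}$ in the compactness step rather than absorbed beforehand, since both are lower-order on a bounded set. Second, in the $q\ge 3$ case you correctly identify why the extra $\|\nabla v\|_{L^2}$ is needed, but you should be explicit that the contradiction sequence is then normalized by $\|\nabla v_k\|_{L^q}+\|\nabla v_k\|_{L^2}=1$ (or similar), otherwise the $L^6$ bound does not follow. These are presentational rather than substantive gaps.
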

\begin{lemma}
Assume that $\Omega$ is an exterior domain of some simply connected domain $\mathbb{D}$ in $\mathbb{R}^3$ with $C^{1,1}$ boundary. For any $v \in W^{1, q}(\Omega)(1<q<+\infty)$ with $v \times n=0$ on $\partial \Omega$, it holds that
$$
\|\nabla v\|_{L^q(\Omega)} \leq C\left(\|v\|_{L^q(\Omega)}+\|\operatorname{div} v\|_{L^q(\Omega)}+\|\operatorname{curl}v\|_{L^q(\Omega)}\right) .
$$
\end{lemma}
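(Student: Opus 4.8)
The plan is to reduce this boundary div--curl estimate to the classical inequality on the whole space $\mathbb{R}^3$ by a localization argument, treating separately a neighbourhood of the compact boundary $\partial\Omega$ and a neighbourhood of infinity. The basic ingredient is the pointwise Hodge identity $\Delta v=\nabla(\operatorname{div}v)-\operatorname{curl}(\operatorname{curl}v)$, together with the whole-space bound: for any vector field $w$ on $\mathbb{R}^3$ with $w,\operatorname{div}w,\operatorname{curl}w\in L^q(\mathbb{R}^3)$ and $1<q<\infty$,
\[
\|\nabla w\|_{L^q(\mathbb{R}^3)}\le C_q\big(\|\operatorname{div}w\|_{L^q(\mathbb{R}^3)}+\|\operatorname{curl}w\|_{L^q(\mathbb{R}^3)}\big),
\]
which follows on the Fourier side from $-|\xi|^2\widehat{w}=i\xi\,\widehat{\operatorname{div}w}-i\xi\times\widehat{\operatorname{curl}w}$, hence $\widehat{\partial_l w}=\frac{\xi_l}{|\xi|^2}\big(\xi\,\widehat{\operatorname{div}w}-\xi\times\widehat{\operatorname{curl}w}\big)$, each entry of which is a Mikhlin--H\"ormander multiplier and therefore bounded on $L^q$.

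Next I would fix a finite partition of unity $1=\chi_\infty+\sum_{j=1}^N\chi_j$ on $\overline{\Omega}$ with $\operatorname{supp}\chi_\infty\subset\{|x|>R\}$ for $R$ so large that $\operatorname{supp}\chi_\infty$ does not meet $\partial\Omega$, and with each $\chi_j$ supported in a small coordinate chart $U_j$ that meets $\partial\Omega$. For the piece near infinity, $\chi_\infty v$ extended by zero lies in $W^{1,q}(\mathbb{R}^3)$ with compact support, and from $\operatorname{div}(\chi_\infty v)=\chi_\infty\operatorname{div}v+\nabla\chi_\infty\cdot v$ and $\operatorname{curl}(\chi_\infty v)=\chi_\infty\operatorname{curl}v+\nabla\chi_\infty\times v$ the whole-space bound yields
\[
\|\nabla(\chi_\infty v)\|_{L^q}\le C\big(\|\operatorname{div}v\|_{L^q}+\|\operatorname{curl}v\|_{L^q}+\|v\|_{L^q}\big).
\]
It is precisely the commutator $[\nabla,\chi_\infty]v$, which is of order zero in $v$, that produces the term $\|v\|_{L^q}$ on the right; this term is unavoidable on an exterior domain, since there a field is not controlled by its divergence and curl alone.

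For each boundary chart I would straighten $\partial\Omega\cap U_j$ by a $C^{1,1}$ diffeomorphism onto a flat piece of $\{x_3=0\}$; in the new variables the condition $v\times n=0$ becomes the vanishing of the two tangential components of the transformed field on the flat part, the normal component remaining free. One then extends the field to the full neighbourhood by the Hodge-compatible reflection --- odd extension of the tangential components, even extension of the normal one --- so that the extended field belongs to $W^{1,q}$ across $\{x_3=0\}$ and its divergence and curl are the natural even/odd extensions of $\operatorname{div}v$ and $\operatorname{curl}v$ plus zeroth-order terms carrying the $C^{1,1}$ metric coefficients. Applying the whole-space inequality in the flattened coordinates and transforming back gives the chart estimate $\|\nabla v\|_{L^q(U_j\cap\Omega)}\le C\big(\|v\|_{L^q}+\|\operatorname{div}v\|_{L^q}+\|\operatorname{curl}v\|_{L^q}\big)$, and summing the $N+1$ local bounds yields the lemma.

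The main obstacle is the boundary analysis. One has to verify that the reflection across the flattened boundary is simultaneously compatible with the first-order Hodge operator $v\mapsto(\operatorname{div}v,\operatorname{curl}v)$ and with the boundary condition $v\times n=0$ --- equivalently, that this boundary value problem satisfies the Lopatinski--Shapiro complementing condition, so that full $W^{1,q}$ regularity up to the boundary genuinely holds --- and one has to absorb the error terms produced by a merely $C^{1,1}$, hence only Lipschitz-gradient, change of variables, which forces a freezing-of-coefficients and small-patch absorption argument in place of a direct appeal to smooth elliptic theory. This is exactly the content of \cite[Theorem 5.1]{Lou2016}, which we invoke.
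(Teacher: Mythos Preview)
The paper does not prove this lemma at all; it merely states it and cites \cite[Theorem~5.1]{Lou2016} as its source. Your proposal sketches a plausible localization--reflection strategy for such a div--curl estimate and then, at the end, invokes the very same reference, so in effect you and the paper arrive at the same place: the result is quoted from \cite{Lou2016} rather than proved. Your outline of the argument (whole-space Mikhlin bound, cutoff near infinity producing the lower-order $\|v\|_{L^q}$ term, boundary flattening and parity-compatible reflection for the condition $v\times n=0$) is a reasonable heuristic for why the estimate should hold, but since the paper offers no proof of its own there is nothing further to compare.
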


Moreover, we have the following conclusion (see \cite[Lemma 2.9]{Cai5586}).
\begin{lemma}
Assume that $\Omega$ is an exterior domain of some simply connected domain $\mathbb{D}$ in $\mathbb{R}^3$ with $C^{1,1}$ boundary. For $v \in D^{k+1, p}(\Omega) \cap D^{1,2}(\Omega)$ for some $k \geq 0, p \in[2,6]$ with $v \cdot n=0$ or $v \times n=0$ on $\partial \Omega$ and $v(x, t) \rightarrow 0$ as $|x| \rightarrow \infty$, there exists a positive constant $C=C(q, k, \mathbb{D})$ such that
\begin{align}
\|\nabla v\|_{W^{k, p}(\Omega)} \leq C\left(\|\operatorname{div}v\|_{W^{k, p}(\Omega)}+\|\operatorname{curl} v\|_{W^{k, p}(\Omega)}+\|\nabla v\|_{L^2(\Omega)}\right).
\end{align}
\end{lemma}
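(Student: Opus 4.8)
This estimate is taken from \cite[Lemma 2.9]{Cai5586}; here I only sketch the argument. I would argue by induction on $k$. For $k=0$ the required inequality $\|\nabla v\|_{L^p}\le C(\|\operatorname{div}v\|_{L^p}+\|\operatorname{curl}v\|_{L^p}+\|\nabla v\|_{L^2})$ with $p\in[2,6]$ is, in the case $v\cdot n=0$, precisely Lemma 2.4 (its first part for $p\in[2,3)$ and its second part for $p\in[3,6]$). In the case $v\times n=0$ it follows by combining Lemma 2.5 on a fixed bounded neighbourhood $U$ of $\partial\Omega$ --- where the term $\|v\|_{L^p(U)}$ occurring there is absorbed via $\|v\|_{L^p(U)}\le C\|v\|_{L^6(\Omega)}\le C\|\nabla v\|_{L^2}$, using $D^{1,2}(\Omega)\hookrightarrow L^6(\Omega)$ together with $v(x)\to0$ as $|x|\to\infty$ --- with the scale-invariant whole-space div--curl estimate $\|\nabla w\|_{L^p(\mathbb{R}^3)}\le C(\|\operatorname{div}w\|_{L^p}+\|\operatorname{curl}w\|_{L^p})$ applied to a cut-off of $v$ supported outside a large ball.

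Assume now $k\ge1$ and that the inequality holds at level $k-1$. I would localize via a partition $1=\chi_0+\chi_\infty$ with $\chi_0\in C_c^\infty(\mathbb{R}^3)$ equal to $1$ near $\partial\Omega$ and $\chi_\infty$ supported outside a large ball, and estimate $\nabla^{k+1}v$ in $L^p$ on three overlapping pieces. On a compact set at positive distance from $\partial\Omega$, the identity $-\Delta v=\operatorname{curl}\operatorname{curl}v-\nabla\operatorname{div}v$ together with interior $L^p$ elliptic regularity for the Laplacian controls $\nabla^{k+1}v$ by $\|\operatorname{div}v\|_{W^{k,p}}+\|\operatorname{curl}v\|_{W^{k,p}}$ and lower-order terms. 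On the support of $\chi_0$ I would flatten $\partial\Omega$: then $v\cdot n=0$ becomes the vanishing of the normal component, and $v\times n=0$ the vanishing of the tangential components, on the flat piece $\{x_3=0\}$, and extending the components by the appropriate even/odd reflections turns the problem into a constant-coefficient div--curl system on $\mathbb{R}^3$, to which the Calder\'on--Zygmund estimate applies; the variable coefficients produced by the flattening and by $\chi_0$ contribute only commutator terms of order $\le k$ in $v$. On the support of $\chi_\infty$, the field $\chi_\infty v$ is globally defined on $\mathbb{R}^3$, hence $\|\nabla(\chi_\infty v)\|_{W^{k,p}(\mathbb{R}^3)}\le C(\|\operatorname{div}(\chi_\infty v)\|_{W^{k,p}}+\|\operatorname{curl}(\chi_\infty v)\|_{W^{k,p}})$, and expanding the products leaves error terms supported on a bounded annulus and of order $\le k$ in $v$. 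Summing the three pieces, every residual lower-order term is either $\|\nabla^j v\|_{L^p}$ with $1\le j\le k$ --- controlled by the induction hypothesis, since $\|\operatorname{div}v\|_{W^{k-1,p}}+\|\operatorname{curl}v\|_{W^{k-1,p}}\le\|\operatorname{div}v\|_{W^{k,p}}+\|\operatorname{curl}v\|_{W^{k,p}}$ --- or $\|v\|_{L^p}$ over a bounded set, which is $\le C\|\nabla v\|_{L^2}$ as above; this closes the induction.

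The delicate point is the boundary piece: one must verify that $v\cdot n=0$ and $v\times n=0$ are complementing boundary conditions (in the Agmon--Douglis--Nirenberg sense) for the overdetermined first-order operator $v\mapsto(\operatorname{div}v,\operatorname{curl}v)$ --- equivalently, that the reflection used after flattening genuinely reduces matters to a solvable constant-coefficient whole-space system --- and that all commutators created by the change of variables are of order $\le k$ in $v$, so that no derivative is lost in the induction. One should also bear in mind that on an exterior domain there is a nontrivial finite-dimensional space of harmonic vector fields satisfying the homogeneous boundary condition; the hypotheses $v(x)\to0$ at infinity and the retention of $\|\nabla v\|_{L^2}$ on the right-hand side are exactly what exclude these fields and make the estimate valid without an additional topological correction term.
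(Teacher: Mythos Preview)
Your proposal is essentially aligned with the paper: the paper does \emph{not} prove this lemma at all but simply quotes it from \cite[Lemma~2.9]{Cai5586}, and you begin by citing the same source. The sketch you add (induction on $k$, localization into a boundary piece, a compact interior piece, and a far-field piece, with lower-order terms absorbed via the induction hypothesis and the embedding $D^{1,2}\hookrightarrow L^6$) is a sound outline of how such div--curl estimates are obtained, and goes beyond what the paper itself provides.

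One small but concrete issue: your lemma numbering is shifted by one relative to the paper. What you call ``Lemma~2.4'' (the two-part estimate for $v\cdot n=0$) is the paper's Lemma~2.3, and what you call ``Lemma~2.5'' (the estimate with the extra $\|v\|_{L^q}$ term for $v\times n=0$) is the paper's Lemma~2.4. As written, your reference to ``Lemma~2.5'' is circular, since that is precisely the statement being proved. If this text is to be inserted into the paper, change those references to Lemma~2.3 and Lemma~2.4 respectively.
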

\begin{remark}
Noticing that $B \cdot n=0, \operatorname{curl} B \times n=0$ on $\partial \Omega$, by Lemma 2.3, for any integer $k \geq 1, p \in[2,6]$, we obtain
\begin{equation}
\begin{split}\label{dbkg}
\|\nabla B\|_{W^{k, p}} & \leq C\left(\|\operatorname{curl} B\|_{W^{k, p}}+\|\nabla B\|_{L^2}\right) \\
& \leq C\left(\left\|\operatorname{curl}^2 B\right\|_{W^{k-1, p}}+\|\operatorname{curl} B\|_{L^p}+\|\nabla B\|_{L^2}\right),
\end{split}
\end{equation}
where $\operatorname{curl}^2 B \triangleq$ $\operatorname{curl}\operatorname{curl}$ $B$ and we have used the fact $\operatorname{div}\operatorname{curl}$ $B=0$.
\end{remark}

The following regularity results on the Stokes equations will be useful for our derivation of higher order a priori estimates (see \cite[Lemma 2.11]{Cai2022}).
\begin{lemma}\label{le23}
For positive constants $\underline{\mu}, \bar{\mu}$, and $q \in(3, \infty)$, in addition to $\eqref{vis}$, assume that $\mu(\rho)$ satisfies
\begin{align}
\nabla \mu(\rho) \in L^q, \quad 0<\underline{\mu} \leq \mu(\rho) \leq \bar{\mu}<\infty .
\end{align}
For the problem with the boundary condition $\eqref{ubj}$
\begin{equation}\label{sto}
\begin{cases}
-\operatorname{div}\left(2 \mu(\rho) D(u)\right)+\nabla P=F, & x \in \Omega, \\ \operatorname{div} u=0, & x \in \Omega, \\ u(x) \rightarrow 0, & |x| \rightarrow \infty,
\end{cases}
\end{equation}
we have the following conclusions:

(1) If $F=f \in L^{6 / 5} \cap L^r$ with $r \in[2 q /(q+2), q]$, then there exists some positive constant $C$ depending only on $\underline{\mu}, \bar{\mu}, r$, and $q$ such that the unique weak solution $(u, P) \in$ $D_{0, \mathrm{div}}^1 \times L^2$ satisfies
\begin{gather}
\|\nabla u\|_{L^2}+\|P\|_{L^2} \leq C\|f\|_{L^{6 / 5}}, \label{st1}\\
\|\nabla^2 u\|_{L^r}+\|\nabla P\|_{L^r} \leq C\|f\|_{L^r}+C\left(\|\nabla \mu(\rho)\|_{L^q}^{\frac{q(5 r-6)}{2r(q-3)}}+1\right)\|f\|_{L^{6 / 5}} .
\end{gather}

(2) If $F=\operatorname{div} g$ with $g \in L^2 \cap L^{\tilde{r}}$ for some $\tilde{r} \in(6 q /(q+6), q]$, then there exists a positive constant $C$ depending only on $\underline{\mu}, \bar{\mu}, q$, and $\tilde{r}$ such that the unique weak solution $(u, P) \in D_{0, \mathrm{div}}^1 \times L^2$ to $\eqref{sto}$ satisfies
\begin{align}
\|\nabla u\|_{L^2 \cap L^{\tilde{r}}}+\|P\|_{L^2 \cap L^{\tilde{r}}} \leq C\|g\|_{L^2 \cap L^{\tilde{r}}}+C\|\nabla \mu(\rho)\|_{L^q}^ \frac{3 q(\tilde{r}-2)}{2\tilde{r}(q-3)}\|g\|_{L^2}.
\end{align}
\end{lemma}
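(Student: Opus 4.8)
\noindent\textbf{Proof proposal for Lemma \ref{le23}.} The plan is to treat \eqref{sto} as a perturbation of the constant–coefficient Stokes system, combining a variational argument at the energy level with the classical $L^r$–regularity theory for the Stokes operator on exterior domains. First I would settle existence, uniqueness and the bound \eqref{st1}: the bilinear form $a(u,v)=\int 2\mu(\rho)D(u):D(v)\,dx$ is continuous and, thanks to $\mu(\rho)\ge\underline\mu$ together with Korn's inequality on the exterior domain $\Omega$ for fields with $u\cdot n=0$ on $\partial\Omega$ decaying at infinity, coercive on $D_{0,\mathrm{div}}^1$. The linear functional $v\mapsto\int f\cdot v\,dx$ (resp. $v\mapsto-\int g:\nabla v\,dx$) is bounded by $C\|f\|_{L^{6/5}}\|\nabla v\|_{L^2}$ (resp. $C\|g\|_{L^2}\|\nabla v\|_{L^2}$) via the embedding $D^1\hookrightarrow L^6$, so Lax--Milgram produces a unique $u$ with $\|\nabla u\|_{L^2}\le C\|f\|_{L^{6/5}}$ (resp. $\le C\|g\|_{L^2}$); the pressure $P\in L^2$ is then recovered by de Rham's theorem, its $L^2$–bound coming from a bounded right inverse of the divergence (Bogovskii operator on bounded sub-domains plus the decay at infinity).

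For the higher-order estimate in part (1), I would use the identity $\operatorname{div}(2\mu(\rho)D(u))=\mu(\rho)\Delta u+2\nabla\mu(\rho)\cdot D(u)$, valid since $\operatorname{div}u=0$, to rewrite \eqref{sto} as a Stokes system with continuous coefficient $\mu(\rho)$ (continuity follows from $\nabla\mu(\rho)\in L^q$, $q>3$, by Morrey's embedding) and right-hand side $f+2\nabla\mu(\rho)\cdot D(u)$. Applying the $L^r$–theory for the Stokes operator with continuous coefficients on exterior domains — via a freezing/partition-of-unity reduction to the classical Cattabriga--Galdi--Farwig--Sohr estimates, the exterior-domain price being the extra $\|\nabla u\|_{L^2}$ term — gives
$$\|\nabla^2 u\|_{L^r}+\|\nabla P\|_{L^r}\le C\|f\|_{L^r}+C\|\nabla\mu(\rho)\|_{L^q}\|\nabla u\|_{L^{qr/(q-r)}}+C\|\nabla u\|_{L^2},$$
where Hölder's inequality is used and $r\in[2q/(q+2),q]$ is exactly what makes $qr/(q-r)\in[2,\infty)$, hence the last factor legitimate. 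Interpolating $\|\nabla u\|_{L^{qr/(q-r)}}$ between $\|\nabla u\|_{L^2}$ and $\|\nabla^2 u\|_{L^r}$ by a Gagliardo--Nirenberg inequality of the type \eqref{G1} on $\Omega$, and absorbing the resulting $\varepsilon\|\nabla^2 u\|_{L^r}$ into the left-hand side by Young's inequality, yields a power of $\|\nabla\mu(\rho)\|_{L^q}$ which, after matching the scaling exponents, is precisely $q(5r-6)/(2r(q-3))$; together with \eqref{st1} this closes part (1).

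Part (2) follows the same two steps, but since $g$ lies only in $L^{\tilde r}$ no second derivatives are available, so for the $L^{\tilde r}$–estimate I would instead invoke the divergence-form $L^{\tilde r}$–regularity for the exterior Stokes operator: write the principal part with frozen viscosity and treat $\operatorname{div}\big((2\mu(\rho)-\underline\mu)D(u)\big)$ as a perturbation on small balls, where the oscillation of $\mu(\rho)$ on a ball of radius $\delta$ is $\lesssim\delta^{1-3/q}\|\nabla\mu(\rho)\|_{L^q}$; balancing $\delta$ against $\|\nabla\mu(\rho)\|_{L^q}$, controlling the number of balls, and interpolating the output between $L^2$ and the top exponent produces the power $3q(\tilde r-2)/(2\tilde r(q-3))$, the restriction $\tilde r>6q/(q+6)$ being what keeps the interpolation and Hölder pairings admissible.

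The main obstacle here is not conceptual but technical: one must run the \emph{exterior}-domain Stokes theory (rather than the interior one) with constants that are scaling-sharp and depend on $\mu(\rho)$ only through $\|\nabla\mu(\rho)\|_{L^q}$, and one must secure uniqueness in the unbounded domain by working in the homogeneous spaces $D_{0,\mathrm{div}}^1\times L^2$ (where the pressure is determined and no nontrivial bounded Stokes fields survive). Since all of this is classical and the statement is quoted from \cite[Lemma 2.11]{Cai2022}, I would either carry it out by the freezing-plus-interpolation scheme sketched above or simply refer to that reference.
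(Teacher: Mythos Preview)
Your proposal is appropriate: the paper does not prove this lemma at all but merely cites \cite[Lemma 2.11]{Cai2022}, exactly as you note in your final paragraph. Your sketch of the variational/Lax--Milgram argument for \eqref{st1} followed by the perturbation-of-Stokes plus interpolation/absorption scheme for the higher estimates is the standard route and matches what the cited reference does, so either carrying it out or simply invoking \cite{Cai2022} is fine here.
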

%

Finally, we state the following elementary inequality(see \cite[Lemma 2.2]{Fan2019}), which will be used in the proof of decay estimates  for $\|B\|_{L^2}$. Furthermore, in order to get the boundary estimates, one can extend the unit normal vector $n$ to $R^3$ such that $n\in C_0^\infty(R^3)$ and $n\equiv 0$ outside $B_{2R_0}$ $\big($let $R_0$ sufficiently large such that $R^3\backslash \Omega\subset B_{R_0}$$\big)$.


\begin{lemma}
Let $\Omega$ be a regular exteior domain in $\mathbb{R}^3, b: \Omega \rightarrow \mathbb{R}^3$ be a sufficiently smooth vector field, $b(x) \rightarrow 0$ as $|x| \rightarrow \infty$, and let $1<p<\infty$. Then, there exists a positive constant C depending only on p and $\Omega$ such that for $\sigma=min\{p-1,1\}$,
$$
\begin{aligned}
-\int_{\Omega} \Delta b \cdot b|b|^{p-2} \mathrm{~d} x
\geqslant\frac{2\sigma}{p^2}\int_{\Omega}|\nabla |b|^\frac{p}{2}|^2 \mathrm{~d} x-C\int_{\Omega\cap B_{2R_0}}|b|^p\mathrm{~d} x.
\end{aligned}
$$
In particular, when $1<p\leqslant6$, we have
\begin{align}\label{sx1}
-\int_{\Omega} \Delta b \cdot b|b|^{p-2} \mathrm{~d} x \geqslant\frac{2\sigma}{p^2}\int_{\Omega}|\nabla |b|^\frac{p}{2}|^2 \mathrm{~d} x-C\|\nabla b\|_{L^2}^p.
\end{align}
\end{lemma}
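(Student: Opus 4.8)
\proof The plan is to integrate by parts once and then treat the bulk and boundary contributions separately. Since $b$ is smooth and vanishes at infinity (the behaviour at infinity is handled by first proving the estimate for $b$ compactly supported and then approximating),
$$-\int_\Omega \Delta b\cdot b\,|b|^{p-2}\,dx=\int_\Omega\Big(|b|^{p-2}|\nabla b|^2+(p-2)|b|^{p-2}\big|\nabla|b|\big|^2\Big)dx-\int_{\partial\Omega}(\partial_n b\cdot b)\,|b|^{p-2}\,dS .$$
The point is that the bulk integral already produces more than the asserted multiple of $\int_\Omega\big|\nabla|b|^{p/2}\big|^2\,dx$, so it remains only to absorb the boundary term into the surplus.

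For the bulk term I would use the identities $\nabla|b|=|b|^{-1}\sum_i b_i\nabla b_i$ and $\big|\nabla|b|^{p/2}\big|^2=\tfrac{p^2}{4}|b|^{p-2}\big|\nabla|b|\big|^2$, together with Kato's pointwise inequality $\big|\nabla|b|\big|\le|\nabla b|$. A short case distinction then gives, pointwise, $|b|^{p-2}|\nabla b|^2+(p-2)|b|^{p-2}\big|\nabla|b|\big|^2\ge (p-1)\,|b|^{p-2}\big|\nabla|b|\big|^2$ when $1<p<2$ (using $(p-2)\le 0$ in the second term and then $\big|\nabla|b|\big|\le|\nabla b|$ in the first), and $\ge|b|^{p-2}\big|\nabla|b|\big|^2$ when $p\ge 2$ (discard the extra nonnegative term, $\sigma=1$); in both cases the right-hand side is $\ge\tfrac{4\sigma}{p^2}\big|\nabla|b|^{p/2}\big|^2$. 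Hence the bulk integral is $\ge\tfrac{4\sigma}{p^2}\int_\Omega\big|\nabla|b|^{p/2}\big|^2\,dx$, and it suffices to show the boundary term is $\ge-\tfrac{2\sigma}{p^2}\int_\Omega\big|\nabla|b|^{p/2}\big|^2\,dx-C\int_{\Omega\cap B_{2R_0}}|b|^p\,dx$.

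This boundary estimate is the main obstacle, because $\partial_n b$ is a genuine first-order trace which cannot be dominated by interior $H^1$-quantities alone (pure divergence-theorem manipulations merely reproduce the left-hand side). I would use $\partial_n b\cdot b=\tfrac12\partial_n|b|^2$, so that the integrand is $\tfrac1p\partial_n|b|^p$, and then exploit the boundary conditions available in the application. Extending $n$ to a field $N\in C_0^\infty(\mathbb R^3)$ supported in $B_{2R_0}$ as in the remark preceding the lemma, one differentiates the relations $b\cdot n=0$ and $\curl b\times n=0$ along $\partial\Omega$ (writing $\partial_k b_i=\partial_i b_k-\epsilon_{kil}(\curl b)_l$) to obtain, on $\partial\Omega$, $\partial_n b\cdot b=-\,b\cdot(\nabla N\,b)$, whence $|(\partial_n b\cdot b)\,|b|^{p-2}|\le \|\nabla N\|_\infty\,|b|^p$ there; in the Dirichlet case $b=0$ on $\partial\Omega$ the boundary term vanishes outright. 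Thus $\big|\int_{\partial\Omega}(\partial_n b\cdot b)|b|^{p-2}\,dS\big|\le C\int_{\partial\Omega}|b|^p\,dS=C\big\||b|^{p/2}\big\|_{L^2(\partial\Omega)}^2$, and I would close with the trace–Young inequality on the bounded annular domain $\Omega\cap B_{2R_0}$,
$$\big\||b|^{p/2}\big\|_{L^2(\partial\Omega)}^2\le \varepsilon\,\big\|\nabla|b|^{p/2}\big\|_{L^2(\Omega\cap B_{2R_0})}^2+C_\varepsilon\,\big\||b|^{p/2}\big\|_{L^2(\Omega\cap B_{2R_0})}^2 ,$$
valid since $|b|^{p/2}\in H^1(\Omega\cap B_{2R_0})$. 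Taking $\varepsilon$ small (so that $C\varepsilon<\tfrac{2\sigma}{p^2}$) absorbs the gradient term into the surplus from the bulk term, while the remaining term is precisely $C_\varepsilon\int_{\Omega\cap B_{2R_0}}|b|^p\,dx$; this yields the first inequality.

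Finally, for the ``in particular'' case $1<p\le 6$, Hölder's inequality on the bounded set and the Sobolev inequality $\|b\|_{L^6(\Omega)}\le C\|\nabla b\|_{L^2(\Omega)}$ on the exterior domain (valid because $b\to0$ at infinity) give $\int_{\Omega\cap B_{2R_0}}|b|^p\,dx\le |\Omega\cap B_{2R_0}|^{1-p/6}\|b\|_{L^6(\Omega)}^p\le C\|\nabla b\|_{L^2(\Omega)}^p$, which replaces the last term by $C\|\nabla b\|_{L^2}^p$ and completes the proof. The delicate point throughout is the third step; everything else is elementary algebra and standard embeddings. \endproof
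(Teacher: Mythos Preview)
Your proof is correct and follows essentially the same path as the paper's: integrate by parts once, bound the bulk term via Kato's inequality to get the coefficient $4\sigma/p^2$, convert the boundary integrand using the conditions $b\cdot n=0$ and $\curl b\times n=0$ to obtain $\partial_n b\cdot b=-b\cdot\nabla n\cdot b$ on $\partial\Omega$, then apply a trace--Young inequality to absorb half of the gradient surplus (the paper multiplies by the compactly supported cutoff $|\nabla n|^{1/2}$ before taking the trace rather than working directly on $\Omega\cap B_{2R_0}$, but this is cosmetic). You are right to flag that the boundary conditions are needed but omitted from the lemma statement; the paper uses them silently at exactly the same step.
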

\begin{proof}
Applying the fact that $|\nabla |b|^\frac{p}{2}|^2\leqslant (\frac{p}{2})^2|b|^{p-2}|\nabla b|^2$ gives
\begin{equation}
\begin{split}\nonumber
& -\int_{\Omega} \Delta b \cdot b|b|^{p-2} \mathrm{~d} x \\
= & \int_{\Omega}|b|^{p-2}|\nabla b|^2 \mathrm{~d} x+4 \frac{p-2}{p^2} \int_{\Omega}|\nabla| b|^{\frac{p}{2}}|^2 \mathrm{~d} x-\int_{\partial \Omega}|b|^{p-2}b \cdot \nabla b \cdot n \mathrm{~d} S\\
\geqslant&\sigma\int_{\Omega}|b|^{p-2}|\nabla b|^2 \mathrm{~d} x+\int_{\partial \Omega}|b|^{p-2}b \cdot\nabla n \cdot b \mathrm{~d} S\\
\geqslant&\frac{4\sigma}{p^2}\int_{\Omega}|\nabla |b|^\frac{p}{2}|^2 \mathrm{~d} x-\int_{\partial\Omega}\left(|b|^\frac{p}{2}\cdot|\nabla n|^\frac{1}{2}\right)^2\mathrm{~d} S\\
\geqslant&\frac{4\sigma}{p^2}\int_{\Omega}|\nabla |b|^\frac{p}{2}|^2 \mathrm{~d} x-\varepsilon\int_{\Omega}\left|\nabla \left(|b|^\frac{p}{2}\cdot|\nabla n|^\frac{1}{2}\right)\right|^2 \mathrm{~d} x-C_\varepsilon\int_{\Omega}\left(|b|^\frac{p}{2}\cdot|\nabla n|^\frac{1}{2}\right)^2\mathrm{~d} x\\
\geqslant&\frac{2\sigma}{p^2}\int_{\Omega}|\nabla |b|^\frac{p}{2}|^2 \mathrm{~d} x-C\int_{\Omega\cap B_{2R_0}}|b|^p\mathrm{~d} x.
\end{split}
\end{equation}
\end{proof}

\section{A priori estimates}
In this section, $\Omega$ is always the exterior of a simply connected bounded smooth domain $D$ in $\mathbb{R}^3$. we will establish some necessary a priori bounds of local strong solutions $(\rho, u, B, P)$ to the system $\eqref{MHD}$-$\eqref{ydxw}$ whose existence is guaranteed by Lemma 2.1. Thus, let $T>0$ be a fixed time and $(\rho, u, B, P)$ be the smooth solution to $\eqref{MHD}$-$\eqref{ydxw}$ on $\Omega \times(0, T]$ with smooth initial data $(\rho_0, u_0, B_0)$ satisfying \eqref{cztj}. In particular, we sometimes use $C(\delta)$ to mean that $C$ depends on $\delta$.

We have the following key a priori estimates on $(\rho, u, B, P)$:
\begin{proposition}\label{p0}
There exists some positive constant $\varepsilon_0$ depending only on $p, q, \bar{\rho}, \underline{\mu}, \bar{\mu},\\ \nu, \Omega, \|\rho_0\|_{L^{3/2}}$, $\|B_0\|_{L^{p}}$ and $M$ such that if $(\rho, u, B, P)$ is a smooth solution of $\eqref{MHD}$-$\eqref{ydxw}$ on $\Omega \times(0, T]$ satisfying
\begin{equation}
\begin{split}\label{p1}
&\int_0^T\|\nabla u\|_\infty \mathrm{~d} t \leqslant \frac{\ln 4}{q}, \quad \int_0^T\|\nabla B\|_{L^2}^p\mathrm{~d} t\leqslant 2\text {, }\\
&\int_0^T\|\nabla u\|_{L^2}^4 \mathrm{~d} t \leqslant 2(\|\nabla u_0\|_{L^2}^2+\|\nabla B_0\|_{L^2}^2),
\end{split}
\end{equation}
the following estimates hold:
\begin{equation}
\begin{split}\label{p2}
&\int_0^T\|\nabla u\|_\infty \mathrm{~d} t \leqslant \frac{\ln 2}{q}, \quad \int_0^T\|\nabla B\|_{L^2}^p\mathrm{~d} t\leqslant 1\text {, }\\
&\int_0^T\|\nabla u\|_{L^2}^4 \mathrm{~d} t \leqslant \|\nabla u_0\|_{L^2}^2+\|\nabla B_0\|_{L^2}^2,
\end{split}
\end{equation}
provided that $\|\nabla u_0\|_{L^2}+\|\nabla B_0\|_{L^2}\le \varepsilon_0$.
\end{proposition}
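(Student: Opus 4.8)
We outline the strategy before giving the details.

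The plan is to run a continuation argument: assuming a smooth solution on $\Omega\times(0,T]$ satisfies \eqref{p1}, we derive a closed family of a priori estimates and then verify that, once $\varepsilon_0$ is small enough, each of the three integral quantities in \eqref{p1} is at most half of the stated value, which is \eqref{p2}; the value of $\varepsilon_0$ is fixed only at the very end, after all the finitely many constants produced below have been made explicit. The preliminary bounds are immediate: since $\operatorname{div}u=0$, the transport identities $\rho_t+u\cdot\nabla\rho=0$ and $\mu(\rho)_t+u\cdot\nabla\mu(\rho)=0$ give $0\le\rho\le\bar\rho$ and $\|\rho(\cdot,t)\|_{L^{3/2}}=\|\rho_0\|_{L^{3/2}}$, while differentiating the second identity and using $\int_0^T\|\nabla u\|_{L^\infty}\,dt\le\tfrac{\ln 4}{q}$ from \eqref{p1} yields $\|\nabla\mu(\rho)(\cdot,t)\|_{L^q}\le 4^{1/q}M$ and $\|\nabla\rho(\cdot,t)\|_{L^2}\le 4^{1/q}\|\nabla\rho_0\|_{L^2}$ (both to be sharpened later to the factor $2$ of \eqref{dmidj}). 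Testing the momentum equation with $u$ and the magnetic equation with $B$, adding, and using a Korn-type inequality $\|\nabla u\|_{L^2}^2\le C\|d\|_{L^2}^2$ for divergence-free fields subject to \eqref{ubj}, the identity $-\nu\int\Delta B\cdot B=\nu\|\operatorname{curl}B\|_{L^2}^2\ge c\nu\|\nabla B\|_{L^2}^2$ (Lemma 2.3, using $\operatorname{div}B=0$ and $B\cdot n=0$), and the exact cancellation of the Lorentz and stretching terms, one obtains the basic energy bound
\[
\sup_{0\le t\le T}\big(\|\rho^{1/2}u\|_{L^2}^2+\|B\|_{L^2}^2\big)+\int_0^T\big(\|\nabla u\|_{L^2}^2+\|\nabla B\|_{L^2}^2\big)\,dt\le\|\rho_0^{1/2}u_0\|_{L^2}^2+\|B_0\|_{L^2}^2 .
\]

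Next we extract the time decay of $B$, which is the decisive point on an exterior domain. Testing the magnetic equation with $|B|^{p-2}B$, discarding the convection term via $\operatorname{div}u=0$, treating the stretching term via $\operatorname{div}B=0$, and bounding $-\nu\int\Delta B\cdot B|B|^{p-2}$ from below by $\tfrac{2\sigma\nu}{p^2}\|\nabla|B|^{p/2}\|_{L^2}^2-C\nu\|\nabla B\|_{L^2}^p$ with Lemma 2.7, we get a differential inequality for $\|B\|_{L^p}^p$ whose right-hand side is time-integrable thanks to $\int_0^T\|\nabla B\|_{L^2}^p\,dt\le2$ and $\int_0^T\|\nabla u\|_{L^\infty}\,dt\le\tfrac{\ln 4}{q}$; Gr\"onwall then gives $\sup_t\|B\|_{L^p}\le C$. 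Combining this with the energy bound and the Gagliardo--Nirenberg interpolation $\|B\|_{L^2}\le C\|B\|_{L^p}^{1-\theta}\|\nabla B\|_{L^2}^{\theta}$, $\theta=\tfrac{3(p-2)}{p-6}\in(0,1)$, converts the $L^2$-identity for $B$ (up to a harmless Gr\"onwall factor absorbing the $\operatorname{curl}$-stretching term) into $\tfrac{d}{dt}\|B\|_{L^2}^2\le -c\|B\|_{L^2}^{2/\theta}$, whence $\|B(\cdot,t)\|_{L^2}^2\lesssim t^{-(6-3p)/(2p)}$ and, one half-derivative higher, $\|\nabla B(\cdot,t)\|_{L^2}\lesssim t^{-(6-p)/(4p)}$ for $t\ge1$; since $\|B_0\|_{L^2}\le C\|B_0\|_{L^p}^{1-\beta}\|\nabla B_0\|_{L^2}^{\beta}$ by interpolation, the constants in these bounds are of size $C\varepsilon_0^{a}$ for a positive exponent $a$. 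The constraint $p<12/7$ is precisely what makes $\int_1^\infty\|\nabla B\|_{L^2}^p\,dt<\infty$ and $\|B\|_{L^2}^{1/2}\|\nabla B\|_{L^2}^{3/2}\in L^1(0,\infty)$ (cf. Remark 1.1), the latter being the exact integrability needed to handle the Lorentz force in the first-order estimates.

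For the first-order estimates, testing the momentum equation with $u_t$ and the magnetic equation with $B_t$ and adding gives, after treating the boundary terms through \eqref{ubj}--\eqref{bbj},
\[
\frac{d}{dt}\Big(\big\|\sqrt{\mu(\rho)}\,d\big\|_{L^2}^2+\tfrac{\nu}{2}\|\nabla B\|_{L^2}^2\Big)+\|\rho^{1/2}u_t\|_{L^2}^2+\|B_t\|_{L^2}^2\le\mathcal R,
\]
where $\mathcal R$ collects $\int\rho u\cdot\nabla u\cdot u_t$, $\int B\cdot\nabla B\cdot u_t$ and $\int(B\cdot\nabla u-u\cdot\nabla B)\cdot B_t$; these are estimated by Gagliardo--Nirenberg (Lemma 2.2), the Stokes regularity of Lemma~\ref{le23} applied with $F=-\rho\dot u+B\cdot\nabla B$ (which turns the a priori bound on $\|\nabla\mu(\rho)\|_{L^q}$ into estimates for $\|\nabla^2 u\|_{L^r}$ and $\|\nabla P\|_{L^r}$), and the div--curl estimates of Remark 2.1 for $B$. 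One thereby closes a differential inequality $\phi'+c\big(\|\rho^{1/2}u_t\|_{L^2}^2+\|B_t\|_{L^2}^2\big)\le C\phi^2+h$ with $\phi=\|\nabla u\|_{L^2}^2+\|\nabla B\|_{L^2}^2$ and $h\in L^1(0,\infty)$ of size $C\varepsilon_0^{a}$, so that $\phi(t)=O(\varepsilon_0^2)$ uniformly and $\int_0^T(\phi+\phi^2)\,dt=O(\varepsilon_0^2)$; multiplying by suitable powers of $t$ and feeding back the decay of $B$ yields the algebraic rates \eqref{dutgui}. The bootstrap then closes: $\int_0^T\|\nabla u\|_{L^2}^4\,dt\le\big(\sup_t\|\nabla u\|_{L^2}^2\big)\int_0^T\|\nabla u\|_{L^2}^2\,dt\le C\varepsilon_0^2\big(\|\nabla u_0\|_{L^2}^2+\|\nabla B_0\|_{L^2}^2\big)$; $\int_0^T\|\nabla B\|_{L^2}^p\,dt$ is bounded by $C\varepsilon_0^p$ on $[0,\min\{1,T\}]$ (H\"older and the energy bound) plus the convergent, $C\varepsilon_0^{a}$-sized decay tail on $[1,T]$; and $\int_0^T\|\nabla u\|_{L^\infty}\,dt$ is controlled through $\int_0^T\|\nabla u\|_{W^{1,s}}\,dt$ with $s=\min\{6,q\}$ (Lemma~\ref{le23} and \eqref{G2}), which after the time-weighted estimates is $\le C\varepsilon_0^{a}$. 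Taking $\varepsilon_0$ below each of the finitely many resulting thresholds gives \eqref{p2}.

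The main obstacle is precisely the bound on $\int_0^T\|\nabla u\|_{L^\infty}\,dt$: it is not controlled by the $H^1$-level energy, which forces one to run Lemma~\ref{le23} at the $W^{1,s}$-level with $s>3$, bringing in the $\|\nabla\mu(\rho)\|_{L^q}$-dependent constants, and then to manage the circular dependence — propagating $\|\nabla\mu(\rho)\|_{L^q}$ uses $\int_0^T\|\nabla u\|_{L^\infty}\,dt\le\tfrac{\ln 4}{q}$, whereas bounding $\int_0^T\|\nabla u\|_{L^\infty}\,dt$ uses the propagated $W^{1,s}$ estimate. This is kept under control only because the extra time integrability furnished by $B_0\in L^p$ with $p<12/7$ tames the otherwise borderline Lorentz-force contributions; without it, the coupling terms would fail to be integrable in time and the scheme would not close. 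A secondary but genuinely new technical point is obtaining the $L^2$-decay of $B$ on the unbounded domain $\Omega$ without recourse to a parabolic fundamental solution, for which Lemma 2.7 is the substitute tool.
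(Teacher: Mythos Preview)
Your overall plan is the paper's plan: propagate $\|\nabla\mu(\rho)\|_{L^q}$ via the transport equation under the bootstrap hypothesis \eqref{p1}; obtain the $L^p$--based decay of $B$ through Lemma~2.7 and interpolation (your ODE argument $y'+cy^{1/\theta}\le0$ is a clean variant of the paper's route via \eqref{4bgj}--\eqref{bdgg} and gives the identical rate $\|B\|_{L^2}^2\lesssim t^{-(6-3p)/(2p)}$); then close the three integrals in \eqref{p2} with smallness. The identification of the two genuine difficulties --- time integrability of the Lorentz terms forced by $p<12/7$, and the circular $\|\nabla u\|_{L^\infty}\leftrightarrow\|\nabla\mu(\rho)\|_{L^q}$ loop --- is exactly right.

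There is, however, one real gap in the chain. From your first--order inequality
\[
\phi'+c\big(\|\rho^{1/2}u_t\|_{L^2}^2+\|B_t\|_{L^2}^2\big)\le C\phi^2+h,\qquad \phi=\|\nabla u\|_{L^2}^2+\|\nabla B\|_{L^2}^2,
\]
even after multiplying by powers of $t$, you only obtain weighted $L^2_t$--control of $\rho^{1/2}u_t$; this is the content of the paper's Lemma~\ref{lemma3.5}. It does \emph{not} yield $\|\nabla u_t\|_{L^2}$, and hence it cannot give the rates \eqref{dutgui} nor the bound on $\int_0^T\|\nabla u\|_{L^\infty}\,dt$. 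Indeed, to run Lemma~\ref{le23} at $L^r$ with $r>3$ one needs $\|\rho u_t\|_{L^r}$, and the only way to reach that from the available quantities is the interpolation
\[
\|\rho u_t\|_{L^r}\le C\|\rho^{1/2}u_t\|_{L^2}^{(6-r)/2r}\|\nabla u_t\|_{L^2}^{(3r-6)/2r},
\]
which already requires $\|\nabla u_t\|_{L^2}$. The paper supplies this missing level in Lemma~3.6: differentiate $\eqref{MHD}_2$ in $t$, test with $u_t$, and derive \eqref{tmuh}, which after multiplication by $t$ (resp.\ $t^{(6-p)/p-\delta}$) gives $\sup_t t\|\rho^{1/2}u_t\|_{L^2}^2+\int_0^T t\|\nabla u_t\|_{L^2}^2\,dt\le C\varepsilon_0^2$ and its weighted analogue \eqref{t3mg}. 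Only then can one write, as in Lemma~3.7 with $r=\tfrac12\min\{q+3,9\}\in(3,\min\{q,6\})$,
\[
\|\nabla u\|_{L^\infty}\le C\|\rho^{1/2}u_t\|_{L^2}^{(6-r)/2r}\|\nabla u_t\|_{L^2}^{(3r-6)/2r}+\text{(lower order)},
\]
split the time integral at $t=1$, and absorb the tail via the decay of $B$ to get $\int_0^T\|\nabla u\|_{L^\infty}\,dt\le C\varepsilon_0^\beta$. Inserting this additional time--differentiated step (and the parallel one for $B$, testing \eqref{btt} with $B_t$) into your outline makes the argument complete; as written, the passage ``multiplying by suitable powers of $t$ \ldots\ yields the algebraic rates \eqref{dutgui}'' skips precisely this level.
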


Before proving Proposition 3.1, we establish some necessary a priori estimates, see Lemmas 3.2-3.7.

We begin with the following estimates:
\begin{lemma}
Let $(\rho, u, B, P)$ be a smooth solution to $\eqref{MHD}$-$\eqref{ydxw}$ satisfying $\eqref{p1}$. Then there exists a generic positive constant $C$ depending only on $p, \underline\mu, \nu, \|\rho_0\|_{L^{3/2}}$ and $\|B_0\|_{L^{p}}$ such that
\begin{align}
\sup _{t \in [0, T]}\left(\|\rho^{1 / 2} u\|_{L^2}^2+\|B\|_{L^2}^2\right)+\int_0^T\left(\|\nabla u\|_{L^2}^2+\|\nabla B\|_{L^2}^2\right) \mathrm{~d} t \leqslant C,\label{xy}
\end{align}
provided $\|\nabla u_0\|_{L^2}+||\nabla B_0\|_{L^2}\leqslant 1$.
\end{lemma}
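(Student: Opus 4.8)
The estimate \eqref{xy} is essentially the basic energy identity of \eqref{MHD}, so the plan is to test the momentum equation against $u$ and the induction equation against $B$, add the two, and integrate in time; the a priori bounds \eqref{p1} play no role here, only the far-field behaviour \eqref{ydxw} and the regularity of a smooth solution (which justify the integrations by parts). Multiplying the second equation of \eqref{MHD} by $u$ and integrating, I would use the mass equation $\rho_t+\div(\rho u)=0$ together with $\div u=0$ and $u\cdot n=0$ to collapse $\int[(\rho u)_t+\div(\rho u\otimes u)]\cdot u\,dx$ to $\frac{d}{dt}\tfrac12\|\rho^{1/2}u\|_{L^2}^2$; integration by parts gives $-\int\div(2\mu(\rho)d)\cdot u\,dx=\int 2\mu(\rho)|d|^2\,dx$, the boundary term $\int_{\partial\Omega}2\mu(\rho)(d\cdot n)\cdot u\,dS$ dropping out because $u$ is tangential while $(d\cdot n)_{\text{tan}}=0$ by \eqref{ubj}; the pressure term vanishes by $\div u=0,\ u\cdot n=0$; and the Lorentz term is rewritten, using $\div B=0$ and $B\cdot n=0$, as $\int(B\cdot\na B)\cdot u\,dx=-\int(B\cdot\na u)\cdot B\,dx$.

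Testing the third equation of \eqref{MHD} by $B$, the transport term $\int(u\cdot\na B)\cdot B\,dx=\tfrac12\int u\cdot\na|B|^2\,dx$ vanishes by $\div u=0,\ u\cdot n=0$, while for the diffusion I would use $-\Delta B=\curl\curl B$ (as $\div B=0$) and $\int_\Omega\curl\curl B\cdot B\,dx=\|\curl B\|_{L^2}^2+\int_{\partial\Omega}(\curl B\times n)\cdot B\,dS=\|\curl B\|_{L^2}^2$, the boundary term vanishing by \eqref{bbj}. Adding the two identities, the magnetic stretching term cancels the rewritten Lorentz term and
$$\frac{d}{dt}\Big(\tfrac12\|\rho^{1/2}u\|_{L^2}^2+\tfrac12\|B\|_{L^2}^2\Big)+2\int\mu(\rho)|d|^2\,dx+\nu\|\curl B\|_{L^2}^2=0.$$
Since $2\int\mu(\rho)|d|^2\,dx\ge2\underline\mu\|d\|_{L^2}^2\ge c\,\underline\mu\|\na u\|_{L^2}^2$ by Korn's inequality for $u\in D^1_{0,\sigma}$ and $\|\curl B\|_{L^2}^2\ge c\|\na B\|_{L^2}^2$ by Lemma 2.3 (\eqref{dvgja} with $q=2$, using $\div B=0$, $B\cdot n=0$), integrating over $[0,T]$ yields
$$\sup_{t\in[0,T]}\big(\|\rho^{1/2}u\|_{L^2}^2+\|B\|_{L^2}^2\big)+\int_0^T\big(\|\na u\|_{L^2}^2+\|\na B\|_{L^2}^2\big)\,dt\le C(\underline\mu,\nu)\big(\|\rho_0^{1/2}u_0\|_{L^2}^2+\|B_0\|_{L^2}^2\big).$$

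It then remains to control the initial energy by the admissible quantities. By H\"older and Sobolev, $\|\rho_0^{1/2}u_0\|_{L^2}^2\le\|\rho_0\|_{L^{3/2}}\|u_0\|_{L^6}^2\le C\|\rho_0\|_{L^{3/2}}\|\na u_0\|_{L^2}^2$; and by the interpolation $\|B_0\|_{L^2}\le\|B_0\|_{L^p}^{1-\te}\|B_0\|_{L^6}^{\te}\le C\|B_0\|_{L^p}^{1-\te}\|\na B_0\|_{L^2}^{\te}$ with $\te=\frac{6-3p}{6-p}\in(0,1)$, which is valid since $p\in[1,12/7)\subset(0,2)$. Using the smallness $\|\na u_0\|_{L^2}+\|\na B_0\|_{L^2}\le1$ to bound the factors $\|\na u_0\|_{L^2}$ and $\|\na B_0\|_{L^2}^{\te}$ by $1$, we obtain \eqref{xy} with $C=C(p,\underline\mu,\nu,\|\rho_0\|_{L^{3/2}},\|B_0\|_{L^p})$.

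The computation itself is routine; the only points needing care are the exact cancellation of the magnetic stretching and Lorentz terms and the vanishing of every boundary integral, which hinge on the slip and perfect-conductor conditions \eqref{ubj}--\eqref{bbj} together with $\div u=\div B=0$, and --- for the stated dependence of $C$ on the data --- the observation that $\|B_0\|_{L^2}$ can only be recovered from $\|B_0\|_{L^p}$ and $\|\na B_0\|_{L^2}$ by interpolation, which is precisely where the restriction $p<2$ (and hence $p<12/7$) enters.
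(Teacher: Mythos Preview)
Your proof is correct and follows essentially the same route as the paper: derive the basic energy identity by testing the momentum and induction equations against $u$ and $B$, use $\eqref{dvgja}$ (together with a Korn-type inequality for the symmetric gradient) to pass from $\|d\|_{L^2}$ and $\|\curl B\|_{L^2}$ to $\|\nabla u\|_{L^2}$ and $\|\nabla B\|_{L^2}$, and then bound the initial energy via H\"older/Sobolev for $\rho_0^{1/2}u_0$ and $L^p$--$L^6$ interpolation for $B_0$. One small remark: the parenthetical ``$p<2$ (and hence $p<12/7$)'' is backwards---the interpolation only needs $p<2$, and the stronger constraint $p<12/7$ is imposed elsewhere in the paper (Remark~1.1) for reasons unrelated to this lemma.
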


\begin{proof}
Multiplying $\eqref{MHD}_2$ and $\eqref{MHD}_3$ by $u$ and $B$, respectively, and then integrating the resulting equality by parts over $\Omega$ leads to
$$
\frac{1}{2} \frac{d}{d t}\left(\|\rho^{1 / 2} u\|_{L^2}^2+\|B\|_{L^2}^2\right)+\int\left(2 \mu(\rho)|d|^2+\nu|\operatorname{curl} B|^2\right) \mathrm{~d} x=0,
$$
by Lemma 2.2, one easily deduces that
\begin{equation}
\begin{cases}\label{ub0}
&\|\rho_0^{1/2}u_0\|_{L^2}^2\leqslant\|\rho_0\|_{L^{3/2}}\|\nabla u_0\|_{L^2}^2,\\
&\|B_0\|_{L^2}^2\leqslant\|B_0\|_{L^{p}}^\frac{4p}{6-p}\|\nabla B_0\|_{L^2}^\frac{12-6p}{6-p},
\end{cases}
\end{equation}
thus it follows from $\eqref{dvgja}$ and $\eqref{ub0}$ that $\eqref{xy}$ holds provided $\|\nabla u_0\|_{L^2}+||\nabla B_0\|_{L^2}\leqslant 1$. The proof of Lemma 3.2 is finished.
\end{proof}

\begin{lemma}
Let $(\rho, u, B, P)$ be a smooth solution to $\eqref{MHD}$-$\eqref{ydxw}$ satisfying $\eqref{p1}$. Then there exists a generic positive constant $C$ depending only on $p, \underline{\mu}, \nu, \|\rho_0\|_{L^{3/2}}$ and $\|B_0\|_{L^{p}}$ such that
\begin{align}
&\sup _{t \in \mid 0, T]}\|\nabla B\|_{L^2}^2+\int_0^T\left(\|B_t\|_{L^2}^2+\|\Delta B\|_{L^2}^2\right)\mathrm{~d} t \leqslant C\|\nabla B_0\|_{L^2}^2 ,\label{dbnlg}\\
&\sup _{t \in[0, T]}t\|\nabla B\|_{L^2}^2+\int_0^T t\left(\|B_t\|_{L^2}^2+\|\Delta B\|_{L^2}^2\right)\mathrm{~d} t \leqslant C ,\label{dbnlt}
\end{align}
provided $\|\nabla u_0\|_{L^2}+||\nabla B_0\|_{L^2}\leqslant 1$.
\end{lemma}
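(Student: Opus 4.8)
The plan is to run a standard two-level energy estimate on the magnetic field equation $\eqref{MHD}_3$, using the boundary condition $\eqref{bbj}$ to control the boundary terms and the already-established basic energy bound $\eqref{xy}$. First I would multiply $\eqref{MHD}_3$ by $B_t$ and integrate by parts over $\Omega$. The diffusion term $-\nu\Delta B$ produces, after integration by parts using $\curl B\times n=0$ on $\partial\Omega$, the quantity $\frac{\nu}{2}\frac{d}{dt}\|\curl B\|_{L^2}^2$ (up to controllable boundary contributions coming from the extension of $n$, handled as in Lemma 2.7), and by Lemma 2.3 (see Remark 2.3) this is comparable to $\|\nabla B\|_{L^2}^2$. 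The troublesome terms are $\int (u\cdot\nabla B)\cdot B_t\,dx$ and $\int (B\cdot\nabla u)\cdot B_t\,dx$; both are bounded by $\varepsilon\|B_t\|_{L^2}^2$ plus terms of the form $C\|u\|_{L^6}^2\|\nabla B\|_{L^3}^2$ and $C\|B\|_{L^6}^2\|\nabla u\|_{L^3}^2$, which after Gagliardo--Nirenberg \eqref{G1} and the $H^2$-elliptic estimate $\|\nabla^2 B\|_{L^2}\le C(\|\Delta B\|_{L^2}+\|\nabla B\|_{L^2})$ (again Lemma 2.3) interpolate into a small multiple of $\|\Delta B\|_{L^2}^2$ plus $C\|\nabla u\|_{L^2}^2\|\nabla B\|_{L^2}^2$ and lower-order terms. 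Absorbing the small terms on the left gives
\[
\frac{d}{dt}\|\nabla B\|_{L^2}^2+\|B_t\|_{L^2}^2+\|\Delta B\|_{L^2}^2\le C\big(\|\nabla u\|_{L^2}^4+\|\nabla u\|_{L^2}^2\big)\|\nabla B\|_{L^2}^2 + (\text{l.o.t.}).
\]

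Then I would apply Gr\"onwall's inequality. By $\eqref{p1}$ the quantity $\int_0^T\|\nabla u\|_{L^2}^4\,dt$ is bounded (by $2(\|\nabla u_0\|_{L^2}^2+\|\nabla B_0\|_{L^2}^2)$), and $\int_0^T\|\nabla u\|_{L^2}^2\,dt$ is bounded by $\eqref{xy}$, so the integrating factor is bounded; this yields $\eqref{dbnlg}$ with constant $C\|\nabla B_0\|_{L^2}^2$, after checking that the lower-order terms (those involving $\|\nabla B\|_{L^2}^2$ without extra smallness, and the $B_{2R_0}$ boundary-region terms from Lemma 2.7-type estimates) are likewise absorbed or Gr\"onwall-controlled using $\int_0^T\|\nabla B\|_{L^2}^2\,dt\le C$. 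For the time-weighted estimate $\eqref{dbnlt}$, I would multiply the same differential inequality by $t$, use $\frac{d}{dt}(t\|\nabla B\|_{L^2}^2)=\|\nabla B\|_{L^2}^2+t\frac{d}{dt}\|\nabla B\|_{L^2}^2$, integrate in time, and control the extra $\int_0^T\|\nabla B\|_{L^2}^2\,dt$ term by $\eqref{xy}$; Gr\"onwall again closes the argument.

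The main obstacle I anticipate is the careful treatment of the boundary terms: the condition $\eqref{bbj}$ is $\curl B\times n=0$ rather than $\nabla B=0$, so integrating $\Delta B$ against $B_t$ does not kill the boundary contribution outright. One must rewrite $\Delta B = \nabla\div B - \curl\curl B = -\curl\curl B$ and integrate by parts so that the boundary integral involves $\curl B\times n$, which vanishes; the residual curvature-type boundary terms (arising when one cannot avoid derivatives of $n$) are then handled exactly as in the proof of Lemma 2.7, via the extension of $n$ to a compactly supported field and an $\varepsilon$-Young trick converting the $\partial\Omega$ integral into a small multiple of $\|\nabla(\cdots)\|_{L^2}^2$ plus a term supported in $\Omega\cap B_{2R_0}$, the latter being dominated by $\|\nabla B\|_{L^2}^2$. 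A secondary technical point is making sure every application of Gagliardo--Nirenberg \eqref{G1} stays within the valid exponent range $[2,6]$ and that the resulting powers of $\|\Delta B\|_{L^2}$ are strictly less than $2$ so they can be absorbed; this forces the splitting of, e.g., $\|\nabla B\|_{L^3}\le C\|\nabla B\|_{L^2}^{1/2}\|\nabla^2 B\|_{L^2}^{1/2}$ and a corresponding Young inequality with the right conjugate exponents.
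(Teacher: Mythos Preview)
Your overall strategy---test $\eqref{MHD}_3$ against $B_t$, close with Gr\"onwall using $\eqref{p1}$ and $\eqref{xy}$, then repeat with the weight $t$---is the right one and matches the paper. Two points need fixing, however.

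First, the paper does not multiply by $B_t$ alone; it expands $\int|B_t-\nu\Delta B|^2\,dx=\int|B\cdot\nabla u-u\cdot\nabla B|^2\,dx$ directly from $\eqref{MHD}_3$. This simultaneously produces $\|B_t\|_{L^2}^2$, $\nu^2\|\Delta B\|_{L^2}^2$, and (via $-\Delta B=\curl\curl B$ and the boundary condition $\curl B\times n=0$) the term $\nu\frac{d}{dt}\|\curl B\|_{L^2}^2$ on the left. Your scheme, by contrast, only generates $\|B_t\|_{L^2}^2+\frac{\nu}{2}\frac{d}{dt}\|\curl B\|_{L^2}^2$ on the left; there is no $\|\Delta B\|_{L^2}^2$ into which the ``small multiple of $\|\Delta B\|_{L^2}^2$'' can be absorbed. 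You would have to recover $\|\Delta B\|_{L^2}^2$ a posteriori from the equation, $\nu\|\Delta B\|_{L^2}\le \|B_t\|_{L^2}+\|B\cdot\nabla u-u\cdot\nabla B\|_{L^2}$, and close the loop that way. The squaring trick avoids this.

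Second, your H\"older split $\|B\cdot\nabla u\|_{L^2}\le\|B\|_{L^6}\|\nabla u\|_{L^3}$ does not work here: at this stage (prior to Lemma~\ref{lemma3.5}) there is no control on $\|\nabla^2 u\|_{L^2}$, hence none on $\|\nabla u\|_{L^3}$. The correct split (and the one implicit in the paper) is $\|B\cdot\nabla u\|_{L^2}\le\|B\|_{L^\infty}\|\nabla u\|_{L^2}$ together with $\|B\|_{L^\infty}\le C\|\nabla B\|_{L^2}^{1/2}\|\nabla^2 B\|_{L^2}^{1/2}$ and the elliptic bound from~\eqref{dbkg}; this yields a small multiple of $\|\Delta B\|_{L^2}^2$ plus $C\|\nabla u\|_{L^2}^4\|\nabla B\|_{L^2}^2$, exactly as needed. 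Finally, your worry about curvature-type boundary terms is unnecessary: since $-\Delta B=\curl\curl B$ and $\curl B\times n=0$ on $\partial\Omega$, the boundary integral in $\int(-\Delta B)\cdot B_t\,dx=\int\curl B\cdot\curl B_t\,dx-\int_{\partial\Omega}(\curl B\times n)\cdot B_t\,dS$ vanishes exactly; no Lemma~2.7 machinery is needed here.
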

\begin{proof}
Thanks to $\eqref{MHD}_3$, Lemma 2.2 and $\eqref{dbkg}$, using integration by parts, we derive that
$$
\begin{aligned}
&\nu \frac{\mathrm{d}}{\mathrm{d} t}\|\operatorname{curl} B\|_{L^2}^2+\|B_t\|_{L^2}^2+\nu^2\|\Delta B\|_{L^2}^2 \\
& =\int|B_t-\nu \Delta B|^2 \mathrm{~d} x=\int|B \cdot \nabla u-u \cdot \nabla B|^2 \mathrm{~d} x \\
& \leqslant \frac{\nu^2}{2}\|\Delta B\|_{L^2}^2+C\left(\|\nabla u\|_{L^2}^4+\|\nabla u\|_{L^2}^2\right)\|\nabla B\|_{L^2}^2,
\end{aligned}
$$
which, together with $\eqref{dvgja}$, gives
\begin{align}\label{dbg}
\nu\frac{\mathrm{d}}{\mathrm{d} t}\|\operatorname{curl} B\|_{L^2}^2+\|B_t\|_{L^2}^2+\frac{\nu^2}{2}\|\Delta B\|_{L^2}^2\leqslant C\left(\|\nabla u\|_{L^2}^4+\|\nabla u\|_{L^2}^2\right)\|\operatorname{curl} B\|_{L^2}^2,
\end{align}
this combined with Gr\"onwall's inequality, $\eqref{p1}$ and $\eqref{xy}$ yields
\begin{align}
\sup _{t \in \mid 0, T]}\|\nabla B\|_{L^2}^2+\int_0^T\left(\|B_t\|_{L^2}^2+\|\Delta B\|_{L^2}^2\right) \mathrm{~d} t \leqslant C\|\nabla B_0\|_{L^2}^2 .\label{dbnlj}
\end{align}

Furthermore, multiplying $\eqref{dbg}$ by $t$ leads to
$$
\begin{aligned}
&\nu\frac{\mathrm{d}}{\mathrm{d} t}\left(t\|\operatorname{curl} B\|_{L^2}^2\right)+t\Big(\|B_t\|_{L^2}^2+\frac{\nu^2}{2}\|\Delta B\|_{L^2}^2\Big)\\
&\leqslant C\left(\|\nabla u\|_{L^2}^4+\|\nabla u\|_{L^2}^2\right)\left(t\|\operatorname{curl} B\|_{L^2}^2\right)+\nu\|\operatorname{curl} B\|_{L^2}^2,
\end{aligned}
$$
combining this with Gr\"onwall's inequality, $\eqref{dvgja}$, $\eqref{p1}$ and $\eqref{xy}$ shows that
\begin{align}
\sup _{t \in[0, T]}t\|\nabla B\|_{L^2}^2+\int_0^T t\left(\|B_t\|_{L^2}^2+\|\Delta B\|_{L^2}^2\right) \mathrm{~d} t \leqslant C .
\end{align}
The proof of Lemma 3.3 is finished.
\end{proof}

The following decay estimate of $B$ is crucial for the subsequent analysis:
\begin{lemma}
Let $(\rho, u, B, P)$ be a smooth solution to $\eqref{MHD}$-$\eqref{ydxw}$ satisfying $\eqref{p1}$. Then there exists a generic positive constant $C$ depending only on $p, \underline{\mu}, \nu, \Omega, \|\rho_0\|_{L^{3/2}}$ and $\|B_0\|_{L^{p}}$ such that for any $0<\delta<1/8$, one obtain
\begin{align}
&\sup_{t\in[0,T]}t^{\frac{6-p}{2p}-\delta}\|\nabla B\|_{L^2}^2+\int_0^T t^{\frac{6-p}{2p}-\delta}\left(\|B_t\|_{L^2}^2+\|\Delta B\|_{L^2}^2\right)\mathrm{~d} t\leqslant C(\delta),\label{dbsj}\\
&\sup_{t\in [0,T]}\left(t^{\frac{12-5p}{2p}-\delta}\|\rho^{1/2}u\|_{L^2}^2+t^\frac{6-3p}{2p}\|B\|_{L^2}^2\right)+\int_0^T t^{\frac{12-5p}{2p}-\delta}\|\nabla u\|_{L^2}^2\mathrm{~d} t\leqslant C(\delta),\label{usj}
\end{align}
provided $\|\nabla u_0\|_{L^2}+||\nabla B_0\|_{L^2}\leqslant 1$.
\end{lemma}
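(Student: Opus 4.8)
The guiding principle is that on the exterior domain $\Omega$ there is no Poincar\'e inequality, so decay of $B$ in $L^2$ cannot be extracted from the $L^2$ dissipation alone; it must be obtained by interpolating against a \emph{propagated} low-integrability norm of $B$, which is precisely the role of the extra hypothesis $B_0\in L^p$ with $p\in[1,12/7)$. The first step is therefore to propagate a $T$-independent bound $\sup_{[0,T]}\|B\|_{L^p}\le C$. I would test $\eqref{MHD}_3$ with $B|B|^{p-2}$ (after a routine regularization near $\{B=0\}$ and a cutoff at infinity): the convection term drops since $\div u=0$ and $u\cdot n=0$ on $\partial\Omega$, the stretching term is bounded by $\|\nabla u\|_{L^\infty}\|B\|_{L^p}^p$, and the diffusion term is exactly where Lemma 2.7 is used, converting $-\nu\int\Delta B\cdot B|B|^{p-2}\,\mathrm{d}x$ into a nonnegative term plus a lower-order contribution controlled by $\|\nabla B\|_{L^2}^p$. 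Since $\eqref{p1}$ bounds both $\int_0^T\|\nabla u\|_{L^\infty}\,\mathrm{d}t$ and $\int_0^T\|\nabla B\|_{L^2}^p\,\mathrm{d}t$, Gr\"onwall's inequality gives the required bound. Interpolating $L^2$ between $L^p$ and $L^6$, using $\|B\|_{L^6}\le C\|\nabla B\|_{L^2}$ and the uniform $L^p$ bound, then produces the crucial nonlinear inequality $\|\nabla B\|_{L^2}^2\ge c\|B\|_{L^2}^{2\beta}$ with $\beta=\frac{6-p}{6-3p}>1$.

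Feeding this into the basic magnetic energy identity $\frac12\frac{d}{dt}\|B\|_{L^2}^2+\nu\|\curl B\|_{L^2}^2=\int(B\cdot\nabla u)\cdot B\,\mathrm{d}x$ (the convection integral again vanishing, and $\|\curl B\|_{L^2}\sim\|\nabla B\|_{L^2}$ by $\eqref{dvgja}$ with $q=2<3$), bounding the right-hand side by $\|\nabla u\|_{L^\infty}\|B\|_{L^2}^2$, removing the $\|\nabla u\|_{L^\infty}$ factor by a bounded integrating factor, and comparing with the ODE $y'+cy^{\beta}\le0$, I would obtain the $\delta$-free bound $\|B\|_{L^2}^2\le Ct^{-\frac{6-3p}{2p}}$ on $(0,T]$; combined with Lemma 3.2 for small $t$ this is exactly the bound $t^{\frac{6-3p}{2p}}\|B\|_{L^2}^2\le C$ asserted in $\eqref{usj}$. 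Next, starting from $\eqref{dbg}$, I would use the identity $\|\curl B\|_{L^2}^2=-\int B\cdot\Delta B\,\mathrm{d}x\le\|B\|_{L^2}\|\Delta B\|_{L^2}$ — whose boundary term vanishes because $\curl B\times n=0$ forces $\curl B\parallel n$ on $\partial\Omega$ — together with the decay of $\|B\|_{L^2}^2$ to deduce $\|\Delta B\|_{L^2}^2\ge ct^{\frac{6-3p}{2p}}\|\curl B\|_{L^2}^4$; a second super-linear comparison (the $\|\nabla u\|$-factors absorbed into a bounded integrating factor via $\eqref{p1}$ and Lemma 3.2) then gives $\|\nabla B\|_{L^2}^2\le Ct^{-\frac{6-p}{2p}}$. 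Finally, multiplying $\eqref{dbg}$ by $t^{m}$ with $m=\frac{6-p}{2p}-\delta$ and integrating, the only borderline term is $m\,t^{m-1}\|\curl B\|_{L^2}^2$, and it is integrable precisely because the slack $\delta>0$ makes the pointwise rate give $t^{\,m-1-\frac{6-p}{2p}}=t^{-1-\delta}\in L^1(1,\infty)$; this delivers the full $\eqref{dbsj}$ with $C=C(\delta)$.

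For the velocity part of $\eqref{usj}$, I would use the energy estimate $\frac{d}{dt}\|\rho^{1/2}u\|_{L^2}^2+c\|\nabla u\|_{L^2}^2\le C\|B\|_{L^4}^4$, where the Lorentz term is rewritten as $\int(B\cdot\nabla B)\cdot u\,\mathrm{d}x=-\int(B\cdot\nabla u)\cdot B\,\mathrm{d}x$ (using $\div B=0$ and $B\cdot n=0$) and the viscous boundary terms drop by $\eqref{ubj}$ (together with a Korn-type inequality on exterior domains). Mass conservation gives $\|\rho(t)\|_{L^{3/2}}=\|\rho_0\|_{L^{3/2}}$, hence $\|\rho^{1/2}u\|_{L^2}^2\le C\|\nabla u\|_{L^2}^2$, while $\eqref{G1}$ and the previous step give $\|B\|_{L^4}^4\le C\|B\|_{L^2}\|\nabla B\|_{L^2}^3\le Ct^{-\frac{12-3p}{2p}}$. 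A Gr\"onwall/Duhamel estimate followed by multiplication by $t^{\frac{12-5p}{2p}-\delta}$ and integration — where $\delta>0$ again removes a borderline logarithmic divergence, since $\frac{12-5p}{2p}-\frac{12-3p}{2p}=-1$ — then yields the weighted bound for $\|\rho^{1/2}u\|_{L^2}^2$ and for $\int_0^T t^{\frac{12-5p}{2p}-\delta}\|\nabla u\|_{L^2}^2\,\mathrm{d}t$. Throughout, the a priori hypotheses $\eqref{p1}$ and Lemmas 3.2--3.3 keep every constant independent of $T$.

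The main obstacle I anticipate is to identify correctly the super-linear dissipative feedback for $B$ — the exponent $\beta=\frac{6-p}{6-3p}$ and the consequent rates $\frac{6-3p}{2p}$ and $\frac{6-p}{2p}$ — and, since the clean identities $\|\curl B\|_{L^2}^2=-\int B\cdot\Delta B$ and $\|\curl B\|_{L^2}\sim\|\nabla B\|_{L^2}$ both rely on the perfect-conductor conditions $\eqref{bbj}$ (and on $\eqref{dvgja}$ with $q=2<3$), to verify carefully that every boundary term cancels. A secondary difficulty is the bookkeeping of the small loss $\delta$: it must be inserted exactly at the time integrals sitting at the threshold of convergence (the $t^{m-1}$ terms above), while the purely pointwise estimate $t^{\frac{6-3p}{2p}}\|B\|_{L^2}^2\le C$ is kept sharp.
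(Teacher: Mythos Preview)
Your approach is correct and reaches the stated conclusions, but the route is genuinely different from the paper's. Both arguments begin by testing $\eqref{MHD}_3$ with $B|B|^{p-2}$ and invoking Lemma~2.7 together with $\eqref{p1}$ to propagate $\|B\|_{L^p}$. From there, however, the paper proceeds through \emph{time-integral} information: it interpolates to obtain $\int_0^T\|B\|_{L^2}^{4p/(6-3p)}\,\mathrm{d}t\le C$, bootstraps this to $\sup_t t\|B\|_{L^2}^{4p/(6-3p)}\le C$ by multiplying $\eqref{bnlg}$ by $t\|B\|_{L^2}^{(10p-12)/(6-3p)}$, and then derives $\eqref{dbsj}$ by adding $\eqref{dbg}$ and $\eqref{bnlg}$ with two different time-weights $t^{(6-p)/(2p)-\delta}$ and $t^{(6-3p)/(2p)-\delta}$, the decay of $\|B\|_{L^2}^2$ being used to integrate the resulting error term $t^{(6-5p)/(2p)-\delta}\|B\|_{L^2}^2$. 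You instead use two \emph{pointwise super-linear ODE comparisons}: first $y'+cy^{\beta}\le 0$ for $y=\|B\|_{L^2}^2$ via the interpolation inequality, then a second one for $\|\curl B\|_{L^2}^2$ via the Cauchy--Schwarz step $\|\curl B\|_{L^2}^2\le\|B\|_{L^2}\|\Delta B\|_{L^2}$ (whose boundary term indeed vanishes under $\eqref{bbj}$). Your route is somewhat more elementary and actually yields the sharp pointwise rate $\|\nabla B\|_{L^2}^2\le Ct^{-(6-p)/(2p)}$ without any $\delta$-loss, the $\delta$ entering only when you pass to the integrated bound, exactly as you note; the paper's uniform weighted-energy style avoids the extra boundary identity and the ODE machinery. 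For the velocity, the paper multiplies the basic energy identity directly by $t^{(12-5p)/(2p)-\delta}$ and handles the derivative-of-weight term $t^{(12-7p)/(2p)-\delta}\|\nabla u\|_{L^2}^2$ by splitting at $t=1$ (Cauchy--Schwarz against $\int_0^1\|\nabla u\|_{L^2}^4$ from $\eqref{p1}$ for small $t$, absorption into the dissipation for large $t$); your Duhamel argument using $\|\rho^{1/2}u\|_{L^2}^2\le\|\rho_0\|_{L^{3/2}}\|\nabla u\|_{L^2}^2$ achieves the same and in fact gives the slightly stronger pointwise rate $t^{-(12-3p)/(2p)}$ for $\|\rho^{1/2}u\|_{L^2}^2$.
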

\begin{proof}
First, standard arguments (see \cite{Lio1996}) imply that
\begin{align}\label{midu}
0\leqslant\rho\leqslant\overline{\rho},\quad\|\rho\|_{L^{3/2}}=\|\rho_0\|_{L^{3/2}}.
\end{align}

Next, multiplying $\eqref{MHD}_3$ by $B {|B|}^{p-2}$ $($$1<p<\frac{12}{7}$$)$(by interpolation inequality, we only consider the case $p>1$), we obtain after using integration by parts and $\eqref{sx1}$ that
$$
\begin{aligned}
&\frac{1}{p} \frac{d}{d t}\int |B|^{p}\mathrm{~d} x+\frac{2\nu(p-1)}{p^2} \int |\nabla |B|^\frac{p}{2}|^2 \mathrm{~d} x\\
&\leqslant \int(B\cdot\nabla u-u\cdot\nabla B)\cdot B {|B|}^{p-2}\mathrm{~d} x+C\|\nabla B\|_{L^2}^p\\
&\leqslant C\|\nabla u\|_{L^\infty}\int |B|^{p}\mathrm{~d} x+C\|\nabla B\|_{L^2}^p,
\end{aligned}
$$
which, together with Gr\"onwall's inequality and $\eqref{p1}$, yields
\begin{align}
\sup _{t \in[0, T]}\|B\|_{L^{p}}^{p}+\int_0^T\left\|\nabla |B|^\frac{p}{2}\right\|_{L^2}^2\mathrm{~d} t \leqslant C .\label{cs}
\end{align}
Observe that
$$
\|B\|_{L^2}=\||B|^\frac{p}{2}\|_{L^\frac{4}{p}}^\frac{2}{p}\leqslant\|\nabla|B|^\frac{p}{2}\|_{L^2}^\frac{6-3p}{2p}\||B|^
\frac{p}{2}\|_{L^2}^\frac{3p-2}{2p},
$$
which also can deduce
\begin{align}\label{4bgj}
\int_0^T\|B\|_{L^2}^\frac{4p}{6-3p}\mathrm{~d} t\leqslant\int_0^T\left\|\nabla |B|^\frac{p}{2}\right\|_{L^2}^2\|B\|_{L^{p}}^\frac{p(3p-2)}{6-3p}\mathrm{~d} t\leqslant C.
\end{align}

Testing $\eqref{MHD}_3$ by $B$ and using $\eqref{G1}$, we see that
\begin{align}
\frac{d}{d t}\|B\|_{L^2}^2+\nu\|\nabla B\|_{L^2}^2 \leqslant C\|\nabla u\|_{L^2}^4\|B\|_{L^2}^2,\label{bnlg}
\end{align}
multiplying $\eqref{bnlg}$ by $t\|B\|_{L^2}^\frac{10p-12}{6-3p}$, one easily obtain
\begin{align}\label{ch2j}
\frac{d}{dt}\big(t\|B\|_{L^2}^\frac{4p}{6-3p}\big)\leqslant C\|\nabla u\|_{L^2}^4\big(t\|B\|_{L^2}^\frac{4p}{6-3p}\big)+\|B\|_{L^2}^\frac{4p}{6-3p},
\end{align}
which combining with $\eqref{p1}$ and $\eqref{4bgj}$, one obtain
\begin{align}\label{bdgg}
\sup_{t\in[0,T]}t\|B\|_{L^2}^\frac{4p}{6-3p}\leqslant C.
\end{align}

Multiplying $\eqref{dbg}$ and $\eqref{bnlg}$ by $t^{\frac{6-p}{2p}-\delta}$ and $t^{\frac{6-3p}{2p}-\delta}$, respectively, combining those results inequality one obtain
\begin{align}\label{pebg}
& \frac{\mathrm{d}}{\mathrm{d} t}\left(t^{\frac{6-p}{2p}-\delta}\|\operatorname{curl} B\|_{L^2}^2+t^{\frac{6-3p}{2p}-\delta}\|B\|_{L^2}^2\right)+Ct^{\frac{6-p}{2p}-\delta}\left(\|B_t\|_{L^2}^2+\|\Delta B\|_{L^2}^2+t^{-1}\|\nabla B\|_{L^2}^2\right)\nonumber\\
&\leqslant C\left(\|\nabla u\|_{L^2}^4+\|\nabla u\|_{L^2}^2\right)\left(t^{\frac{6-p}{2p}-\delta}\|\operatorname{curl} B\|_{L^2}^2+t^{\frac{6-3p}{2p}-\delta}\|B\|_{L^2}^2\right)+Ct^{\frac{6-5p}{2p}-\delta}\|B\|_{L^2}^2,
\end{align}
observe for any $0<\delta<\frac{1}{8}$, $\frac{6-5p}{2p}-\delta>-1$, hence we have
$$
\begin{aligned}
&\int_0^Tt^{\frac{6-5p}{2p}-\delta}\|B\|_{L^2}^2\mathrm{~d} t\\
&\leqslant \sup_{t\in[0,1]}\|B\|_{L^2}^2\int_0^1t^{\frac{6-5p}{2p}-\delta} \mathrm{~d} t +\sup_{t\in[1,T]}\left(t^\frac{6-3p}{2p}\|B\|_{L^2}^2\right)\int_1^Tt^{-1-\delta}\mathrm{~d} t\\
&\leqslant C(\delta).
\end{aligned}
$$
Thus, integrating $\eqref{pebg}$ over $(0,T)$ and using estimate $\eqref{dvgja}$ and $\eqref{p1}$ and $\eqref{xy}$, we get
\begin{align}\label{dbs1}
\sup_{t\in[0,T]}t^{\frac{6-p}{2p}-\delta}\|\nabla B\|_{L^2}^2+\int_0^T t^{\frac{6-p}{2p}-\delta}\left(\|B_t\|_{L^2}^2+\|\Delta B\|_{L^2}^2+t^{-1}\|\nabla B\|_{L^2}^2\right)\mathrm{~d} t\leqslant C(\delta).
\end{align}

Finally, multiplying $\eqref{MHD}_2$ by $t^{\frac{12-5p}{2p}-\delta}u$, and integrating the resulting equations by parts over $\Omega$ leads to
$$
\frac{d}{dt}\left(t^{\frac{12-5p}{2p}-\delta}\|\rho^{1/2} u\|_{L^2}^2\right)+2\underline\mu t^{\frac{12-5p}{2p}-\delta}\|\nabla u\|_{L^2}^2\leqslant Ct^{\frac{12-5p}{2p}-\delta}\|B\|_{L^2}\|\nabla B\|_{L^2}^3+Ct^{\frac{12-7p}{2p}-\delta}\|\nabla u\|_{L^2}^2,
$$
%
it follows from $\eqref{bdgg}$ and $\eqref{dbs1}$, we have
$$
\begin{aligned}
&\int_0^Tt^{\frac{12-5p}{2p}-\delta}\|B\|_{L^2}\|\nabla B\|_{L^2}^3\mathrm{~d} t\\
&\leqslant \sup_{t\in[0,T]}t^\frac{6-3p}{4p}\|B\|_{L^2}\sup_{t\in[0,T]}t^{\frac{6-p}{4p}-\frac{\delta}{2}}\|\nabla B\|_{L^2}\int_0^Tt^{\frac{6-3p}{2p}-\frac{\delta}{2}}\|\nabla B\|_{L^2}^2\mathrm{~d} t\leqslant C(\delta),
\end{aligned}
$$
for $t^{\frac{12-7p}{2p}-\delta}\|\nabla u\|_{L^2}^2$, when $0<t\leqslant1$,
$$
\int_0^1 t^{\frac{12-7p}{2p}-\delta}\|\nabla u\|_{L^2}^2 \mathrm{~d} t\leqslant \left(\int_0^1 t^{\frac{12-7p}{p}-2\delta}\mathrm{~d} t\right)^{1/2}\left(\int_0^1\|\nabla u\|_{L^2}^4\mathrm{~d} t\right)^{1/2}\leqslant C,
$$
when $1<t\leqslant T$,
$$
\begin{aligned}
\int_1^T t^{\frac{12-7p}{2p}-\delta}\|\nabla u\|_{L^2}^2 \mathrm{~d} t
&\leqslant \underline\mu\int_1^T t^{\frac{12-5p}{2p}-\delta}\|\nabla u\|_{L^2}^2 \mathrm{~d} t+C\int_1^T t^{\frac{12-13p}{2p}-\delta}\|\nabla u\|_{L^2}^2 \mathrm{~d} t\\
&\leqslant \underline\mu\int_1^T t^{\frac{12-5p}{2p}-\delta}\|\nabla u\|_{L^2}^2 \mathrm{~d} t+C,
\end{aligned}
$$
hence we can deduce $\eqref{usj}$. The proof of Lemma 3.4 is finished.
\end{proof}

By Lemma 3.2-3.4, we can now derive the uniform bound for the gradient of velocity on $L^{\infty}\left(0, T ; L^2\right)$:
\begin{lemma}\label{lemma3.5}
Let $(\rho, u, B, P)$ be a smooth solution to $\eqref{MHD}$-$\eqref{ydxw}$ satisfying $\eqref{p1}$. Then there exists a generic positive constant $C$ depending only on $p, q, \bar{\rho}, \underline{\mu}, \bar{\mu}, \nu, \Omega, \|\rho_0\|_{L^{3/2}}$, $\|B_0\|_{L^{p}}$ and $M$ such that for any $0<\delta<1/8$, one obtain
\begin{align}
&\sup _{t \in[0,T]}\|\nabla u\|_{L^2}^2+\int_0^T \|\rho^{1 / 2} u_t\|_{L^2}^2 \mathrm{~d} t \leqslant C\left(\|\nabla u_0\|_{L^2}^2+\|\nabla B_0\|_{L^2}^2\right) ,\label{dux}\\
&\sup _{t \in[0,T]}t^{\frac{6-2p}{p}-\delta}\|\nabla u\|_{L^2}^2+\int_0^T t^{\frac{6-2p}{p}-\delta}\|\rho^{1 / 2} u_t\|_{L^2}^2 \mathrm{~d} t\leqslant C(\delta) ,\label{tdugj}
\end{align}
provided $\|\nabla u_0\|_{L^2}+||\nabla B_0\|_{L^2}\leqslant 1$.
\end{lemma}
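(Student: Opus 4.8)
The plan is to obtain the basic $L^\infty_t L^2_x$ estimate for $\nabla u$ by testing the momentum equation $\eqref{MHD}_2$ with $u_t$, and then to bootstrap this to the weighted version $\eqref{tdugj}$ by multiplying by the time weight $t^{\frac{6-2p}{p}-\delta}$. First I would compute $\int \rho u_t\cdot u_t\,\mathrm{d}x$ from $\eqref{MHD}_2$, integrate by parts in the viscous term to produce $\frac{\mathrm d}{\mathrm dt}\int\mu(\rho)|d|^2\,\mathrm{d}x$ up to a commutator term involving $\mu(\rho)_t = -u\cdot\nabla\mu(\rho)$, and move the convective term $\int\rho u\cdot\nabla u\cdot u_t$ and the Lorentz term $\int B\cdot\nabla B\cdot u_t$ to the right-hand side. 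Using the boundary conditions $\eqref{ubj}$ one handles the boundary integrals produced by integration by parts in the Stokes term; the curvature of $\partial\Omega$ contributes lower-order terms absorbable by the Gagliardo--Nirenberg inequality $\eqref{G1}$ and Lemma 2.3--2.4 (as in the estimate of $\|\nabla u\|_{L^2}$ from $\|\curl u\|,\|\div u\|$).

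The core of the argument is the estimate of the right-hand side terms. The term $\int\rho u\cdot\nabla u\cdot u_t$ is bounded by $\frac14\|\rho^{1/2}u_t\|_{L^2}^2 + C\|\rho^{1/2}\|_{L^\infty}\|u\|_{L^6}^2\|\nabla u\|_{L^3}^2$, and $\|\nabla u\|_{L^3}$ is interpolated between $\|\nabla u\|_{L^2}$ and $\|\nabla^2 u\|_{L^2}$ (or $\|\nabla u\|_{L^6}$), the latter controlled via the Stokes regularity Lemma~\ref{le23} with $F = -\rho u_t - \rho u\cdot\nabla u + B\cdot\nabla B$; this leads to a term like $C\|\nabla u\|_{L^2}^4\|\nabla u\|_{L^2}^2$ plus $C\|\nabla u\|_{L^2}^6$ type contributions which are supercritical and must be absorbed using the smallness of $\|\nabla u\|_{L^2}$ guaranteed by $\eqref{xy}$ (and ultimately $\varepsilon_0$). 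The Lorentz term $\int B\cdot\nabla B\cdot u_t$ is bounded by $\frac14\|\rho^{1/2}u_t\|_{L^2}^2 + C\|\nabla\times(B\cdot\nabla B)\|$ after integrating by parts to avoid $u_t$ without $\rho$-weight, or more simply handled by writing $B\cdot\nabla B = \div(B\otimes B) - \frac12\nabla|B|^2$ and absorbing the pressure-like part into $P$; in any case it reduces to $C\|B\|_{L^6}^2\|\nabla B\|_{L^3}^2 \lesssim \|\nabla B\|_{L^2}^3\|\Delta B\|_{L^2}$, integrable in time by $\eqref{dbnlg}$--$\eqref{dbnlt}$. After Grönwall (using $\int_0^T(\|\nabla u\|_{L^2}^4+\|\nabla u\|_{L^2}^2)\,\mathrm dt\lesssim 1$ from $\eqref{p1}$ and $\eqref{xy}$) one closes $\eqref{dux}$.

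For the weighted estimate $\eqref{tdugj}$, I would multiply the differential inequality obtained above by $t^{\frac{6-2p}{p}-\delta}$, producing an extra term $\big(\tfrac{6-2p}{p}-\delta\big)t^{\frac{6-2p}{p}-\delta-1}\|\nabla u\|_{L^2}^2$ on the right. Splitting $\int_0^T$ into $\int_0^1$ and $\int_1^T$: on $[0,1]$ the weight is bounded (note $\frac{6-2p}{p}-\delta > 0$ when $p<12/7$, hence also $>-1$, so no singularity) and $\int_0^1\|\nabla u\|_{L^2}^2\,\mathrm dt<\infty$; on $[1,T]$ the exponent $\frac{6-2p}{p}-\delta-1 = \frac{6-3p}{p}-\delta$ must be compared with the decay of $\|\nabla u\|_{L^2}^2$ available from $\eqref{usj}$, namely $t^{-(\frac{12-5p}{2p}-\delta)}$ integrated — more precisely one uses $\int_0^T t^{\frac{12-5p}{2p}-\delta}\|\nabla u\|_{L^2}^2\,\mathrm dt\le C$ together with $\sup_t t^{\frac{6-2p}{p}-\delta}\|\nabla u\|_{L^2}^2$ being the quantity we bootstrap, so the surplus $t^{\frac{6-3p}{p}-\delta - (\frac{12-5p}{2p}-\delta)} = t^{-\frac{p}{2p}\cdot(\cdots)}$ exponent works out to be $<-1$, rendering it integrable; the Lorentz term needs the decay of $\|\nabla B\|_{L^2}$ and $\|\Delta B\|_{L^2}$ from $\eqref{dbsj}$, which is exactly why that lemma was proved first. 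The main obstacle is the bookkeeping of exponents to ensure every weighted time integral converges and that the supercritical velocity nonlinearity is genuinely absorbed by smallness rather than merely by Grönwall; getting the power of $t$ in the Stokes-regularity-generated term $\|\nabla u\|_{L^2}^4\cdot t^{\text{weight}}\|\nabla u\|_{L^2}^2$ to be Grönwall-compatible (i.e. the coefficient $\|\nabla u\|_{L^2}^4$ lies in $L^1_t$ uniformly) is where the constraints in $\eqref{p1}$ are used in full.
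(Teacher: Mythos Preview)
Your overall strategy---test $\eqref{MHD}_2$ with $u_t$, use Stokes regularity (Lemma~\ref{le23}) to control $\|\nabla^2 u\|_{L^2}$, close by Gr\"onwall with coefficient $\|\nabla u\|_{L^2}^4$, then multiply by $t^{\frac{6-2p}{p}-\delta}$---matches the paper. But there is a genuine gap in your handling of the Lorentz term $\int B\cdot\nabla B\cdot u_t\,\mathrm dx$, and it is exactly the vacuum issue.

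You propose either to bound it by $\tfrac14\|\rho^{1/2}u_t\|_{L^2}^2 + C\|\cdot\|$ or to integrate by parts in space after writing $B\cdot\nabla B=\div(B\otimes B)-\tfrac12\nabla|B|^2$. Neither works: the term carries no factor of $\rho$, so splitting off $\|\rho^{1/2}u_t\|_{L^2}$ would require a factor $\rho^{-1/2}$ that blows up on the vacuum set; and spatial integration by parts produces $\int (B\otimes B):\nabla u_t$, which needs $\|\nabla u_t\|_{L^2}$---a quantity not yet available at this stage of the bootstrap (it first appears in the \emph{next} lemma). Your assertion that ``in any case it reduces to $C\|B\|_{L^6}^2\|\nabla B\|_{L^3}^2$'' has no mechanism behind it in the presence of vacuum.

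The paper's device is to integrate by parts \emph{in time}: write
\[
\int B\cdot\nabla B\cdot u_t\,\mathrm dx=\frac{\mathrm d}{\mathrm dt}\int B\cdot\nabla B\cdot u\,\mathrm dx-\int\bigl(B_t\cdot\nabla B+B\cdot\nabla B_t\bigr)\cdot u\,\mathrm dx=:\frac{\mathrm d}{\mathrm dt}J_0-J_3.
\]
Then $|J_0|\le \varepsilon\|\nabla u\|_{L^2}^2+C\|B\|_{L^2}\|\nabla B\|_{L^2}^3$ is a harmless boundary-in-time term (absorbed into the left using Korn's inequality $\|\nabla u\|_{L^2}^2\le C_0\|\nabla u+(\nabla u)^T\|_{L^2}^2$), and $J_3$ is estimated via $\|B_t\|_{L^2}$, which \emph{is} available from Lemma~3.3. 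This is the missing idea; once you insert it, the rest of your outline (including the weighted version and the exponent bookkeeping using $\eqref{dbsj}$ and $\eqref{usj}$) goes through as the paper does.
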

\begin{proof}
First, it follows from $\eqref{MHD}_1$ that
$$
[\mu(\rho)]_t+u \cdot \nabla \mu(\rho)=0,
$$
by standard calculations, we have
\begin{align}\label{nqgj}
\frac{\mathrm{d}}{\mathrm{d} t}\|\nabla \mu(\rho)\|_{L^{q}} \leqslant q\|\nabla u\|_{L^{\infty}}\|\nabla \mu(\rho)\|_{L^q},
\end{align}
which together with Gr\"onwall's inequality and $\eqref{p1}$ yields
\begin{align}\label{mmdq}
\sup _{t \in[0, T]}\|\nabla \mu(\rho)\|_{L^q} \leqslant\|\nabla \mu(\rho_0)\|_{L^q} \exp \{q \int_0^T\|\nabla u\|_{L^{\infty}} \mathrm{d} t\}\leqslant 4\|\nabla \mu(\rho_0)\|_{L^q}.
\end{align}

Next, it follows from $\eqref{MHD}_2$, Lemma 2.2, Lemma 2.6, $\eqref{midu}$ and $\eqref{mmdq}$ that
$$
\begin{aligned}
&\|\nabla u\|_{H^1}+\|P\|_{H^1}\\
&\leqslant C\|\rho u_t+\rho u\cdot\nabla u-B\cdot\nabla B\|_{L^{6/5}\cap L^2}\\
&\leqslant C\left(\|\rho\|_{L^{3/2}}^{1/2}+\overline\rho^{1/2}\right)\left(\|\rho^{1/2}u_t\|_{L^2}+\overline\rho^{1/2}\|u\cdot\nabla u\|_{L^2}\right)+C\|B\|_{L^6}\|\nabla B\|_{L^3}+C\|B\|_{L^3}\|\nabla B\|_{L^2}\\
&\leqslant \frac{1}{2}\|\nabla^2 u\|_{L^2}+C\|\rho^{1/2}u_t\|_{L^2}+C\|\nabla u\|_{L^2}^3+C\|\nabla B\|_{L^2}^2+C\left(\|B\|_{L^2}^{1/2}+\|\Delta B\|_{L^2}^{1/2}\right)\|\nabla B\|_{L^2}^{3/2},
\end{aligned}
$$
which directly yields that
\begin{align}
&\|\nabla u\|_{H^1}+\|P\|_{H^1}\nonumber\\
&\leqslant C\|\rho^{1/2}u_t\|_{L^2}+C\|\nabla u\|_{L^2}^3+C\|\nabla B\|_{L^2}^2+C\left(\|B\|_{L^2}^{1/2}+\|\Delta B\|_{L^2}^{1/2}\right)\|\nabla B\|_{L^2}^{3/2}.\label{tht}
\end{align}

Multiplying $\eqref{MHD}_2$ by $u_t$ and integrating the resulting equality by parts leads to
\begin{equation}\label{duh2}
\begin{split}
&\frac{1}{4} \frac{\mathrm{d}}{\mathrm{d} t} \int \mu(\rho)|\nabla u+(\nabla u)^T|^2 \mathrm{~d} x+\int \rho|u_t|^2 \mathrm{~d} x \\
&=\frac{\mathrm{d}}{\mathrm{d} t} \int B \cdot \nabla B \cdot u \mathrm{~d} x + \frac{1}{4} \int \mu(\rho) u \cdot \nabla|\nabla u+(\nabla u)^T|^2 \mathrm{~d}x\\
&\quad -\int \rho u \cdot \nabla u \cdot u_t \mathrm{~d} x- \int \left(B_t \cdot \nabla B \cdot u + B \cdot \nabla B_t \cdot u\right) \mathrm{~d} x \\
& \triangleq \frac{\mathrm{d}}{\mathrm{d} t} J_0+\sum_{i=1}^3 J_i .
\end{split}
\end{equation}


Now, we will use Lemma 2.2, $\eqref{xy}$, $\eqref{dbnlg}$, $\eqref{midu}$ and $\eqref{tht}$ to estimate each term on the right hand of $\eqref{duh2}$ as follows:
$$
\begin{aligned}
|J_1|&=\left|\frac{1}{4}\int \mu(\rho)u\cdot\nabla |\nabla u+(\nabla u)^T|^2 \mathrm{~d} x\right|\\
&\leqslant C\|u\|_{L^6}\|\nabla u\|_{L^3}\|\nabla^2 u\|_{L^2}\\
&\leqslant C\|\nabla u\|_{L^2}^{3/2}\|\nabla^2 u\|_{L^2}^{3/2}\\
&\leqslant\frac{1}{4}\|\rho^{1/2}u_t\|_{L^2}^2+C\|\nabla u\|_{L^2}^6+C\|\nabla u\|_{L^2}^{3/2}\|\nabla B\|_{L^2}^3+C\|\nabla B\|_{L^2}\|\Delta B\|_{L^2}\|\nabla u\|_{L^2}^2\\
&\quad+C\|\nabla B\|_{L^2}^6+C\|\nabla u\|_{L^2}^{3/2}\|B\|_{L^2}^{3/4}\|\nabla B\|_{L^2}^{9/4},
\end{aligned}
$$
$$
\begin{aligned}
|J_2|&=\left|-\int \rho u \cdot \nabla u \cdot u_t \mathrm{~d} x\right|\\
&\leqslant C\|\rho^{1/2}u_t\|_{L^2}\|u\|_{L^6}\|\nabla u\|_{L^3}\\
&\leqslant\frac{1}{4}\|\rho^{1/2}u_t\|_{L^2}^2+C\|\nabla u\|_{L^2}^6+C\|\nabla u\|_{L^2}^3\|\nabla B\|_{L^2}^2+C\|\nabla u\|_{L^2}^3\|\nabla B\|_{L^2}^{3/2}\\
&\quad+C\|\nabla B\|_{L^2}^{3/2}\|\Delta B\|_{L^2}^{1/2}\|\nabla u\|_{L^2}^3\\
&\leqslant\frac{1}{4}\|\rho^{1/2}u_t\|_{L^2}^2+C\|\nabla u\|_{L^2}^6+C\|\nabla u\|_{L^2}^{3/2}\|\nabla B\|_{L^2}^2+C\|\nabla B\|_{L^2}\|\Delta B\|_{L^2}\|\nabla u\|_{L^2}^2\\
&\quad+C\|\nabla B\|_{L^2}^6,
\end{aligned}
$$
and
$$
\begin{aligned}
|J_3|&=\left|\- \int \left(B_t \cdot \nabla B \cdot u + B \cdot \nabla B_t \cdot u\right) \mathrm{~d} x \right|=\left|\- \int \left(B_t \cdot \nabla B \cdot u - B \cdot \nabla u \cdot B_t\right) \mathrm{~d} x \right|\\
&\leqslant C\|B_t\|_{L^2}\|\nabla B\|_{L^3}\|u\|_{L^6}+C\|B\|_{L^\infty}\|\nabla u\|_{L^2}\|B_t\|_{L^2}\\
&\leqslant C\|B_t\|_{L^2}\|\nabla B\|_{L^2}^{1/2}\|\nabla^2 B\|_{L^2}^{1/2}\|\nabla u\|_{L^2}\\
&\leqslant C\|B_t\|_{L^2}\|\nabla B\|_{L^2}^{1/2}\|\Delta B\|_{L^2}^{1/2}\|\nabla u\|_{L^2}+C\|B_t\|_{L^2}\|\nabla B\|_{L^2}\|\nabla u\|_{L^2}.\\
\end{aligned}
$$

Substituting $J_1$, $J_2$ and $J_3$ into $\eqref{duh2}$ gives
\begin{align}
&\frac{1}{4}\frac{\mathrm{d}}{\mathrm{d} t} \int\mu(\rho)\left|\nabla u+(\nabla u)^T\right|^2 \mathrm{d} x+\|\rho^{1 / 2} u_t\|_{L^2}^2 \nonumber\\
&\leqslant \frac{\mathrm{d}}{\mathrm{d} t} J_0+C\left(\|\nabla u\|_{L^2}^4+\|\nabla B\|_{L^2}\|\Delta B\|_{L^2}\right)\|\nabla u\|_{L^2}^2+C\|\nabla u\|_{L^2}^{3/2}\|\nabla B\|_{L^2}^2\nonumber\\
&\quad+C\|\nabla B\|_{L^2}^6+C\|B_t\|_{L^2}\|\nabla B\|_{L^2}^{1/2}\|\Delta B\|_{L^2}^{1/2}\|\nabla u\|_{L^2}+C\|B_t\|_{L^2}\|\nabla B\|_{L^2}\|\nabla u\|_{L^2}\nonumber\\
&\triangleq \frac{\mathrm{d}}{\mathrm{d} t} J_0+C\left(\|\nabla u\|_{L^2}^4+\|\nabla B\|_{L^2}\|\Delta B\|_{L^2}\right)\|\nabla u\|_{L^2}^2+C\sum_{i=1}^4R_i(t).\label{duhb}
\end{align}
It follows from $\eqref{p1}$, $\eqref{xy}$ and $\eqref{dbnlg}$ gives
\begin{equation}
\begin{split}
\int_0^T \left(\|\nabla u\|_{L^2}^4+\|\nabla B\|_{L^2}\|\Delta B\|_{L^2}\right)\mathrm{~d} t\leqslant C\label{dug1}.
\end{split}
\end{equation}
Moreover, choose $\delta=\delta_1=1/16$ in $\eqref{usj}$, hence $-\frac{36-15p}{2p}+3\delta_1<-1$, it follows from $\eqref{p1}$, $\eqref{xy}$, $\eqref{dbnlg}$ and $\eqref{dbsj}$ that
$$
\begin{aligned}
\int_0^TR_1(t)\mathrm{~d} t&=\int_0^T \|\nabla u\|_{L^2}^{3/2}\|\nabla B\|_{L^2}^2\mathrm{~d} t\\
&\leqslant \sup_{t\in[0,T]}\|\nabla B\|_{L^2}^2\bigg[\int_0^1 \|\nabla u\|_{L^2}^{3/2}\mathrm{~d} t\\
&\quad+\left(\int_1^T t^{\frac{12-5p}{2p}-\delta_1}\|\nabla u\|_{L^2}^2\mathrm{~d} t\right)^{3/4}\left(\int_1^Tt^{-\frac{36-15p}{2p}+3\delta_1}\mathrm{~d} t\right)^{1/4}\bigg]\leqslant C\|\nabla B_0\|_{L^2}^2,
\end{aligned}
$$
$$
\begin{aligned}
\int_0^TR_2(t)\mathrm{~d} t=\int_0^T \|\nabla B\|_{L^2}^6\mathrm{~d} t\leqslant \sup_{t\in[0,T]}\|\nabla B\|_{L^2}^4\int_0^T \|\nabla B\|_{L^2}^2\mathrm{~d} t\leqslant C\|\nabla B_0\|_{L^2}^2,\quad\quad\quad
\end{aligned}
$$
$$
\begin{aligned}
\int_0^TR_3(t)\mathrm{~d} t
&=\int_0^T \|B_t\|_{L^2}\|\nabla B\|_{L^2}^{1/2}\|\Delta B\|_{L^2}^{1/2}\|\nabla u\|_{L^2}\mathrm{~d} t\\
&\leqslant \sup_{t\in[0,T]}\|\nabla B\|_{L^2}^{1/2}\left(\int_0^T \|B_t\|_{L^2}^2\mathrm{~d} t\right)^{1/2}\left(\int_0^T \|\Delta B\|_{L^2}^2\mathrm{~d} t\right)^{1/4}\left(\int_0^T \|\nabla u\|_{L^2}^4\mathrm{~d} t\right)^{1/4}\\
& \leqslant C\|\nabla B_0\|_{L^2}^2,
\end{aligned}
$$
and
$$
\begin{aligned}
\int_0^TR_4(t)\mathrm{~d} t&=\int_0^T \|B_t\|_{L^2}\|\nabla B\|_{L^2}\|\nabla u\|_{L^2}\mathrm{~d} t\\
&\leqslant \sup_{t\in[0,T]}\|\nabla B\|_{L^2}\left(\int_0^T \|B_t\|_{L^2}^2\mathrm{~d} t\right)^{1/2}\left(\int_0^T \|\nabla u\|_{L^2}^2\mathrm{~d} t\right)^{1/2}\leqslant C\|\nabla B_0\|_{L^2}^2.
\end{aligned}
$$
It follows from $\eqref{ubj}$ and $\eqref{ydxw}$ that
$$
\|\nabla u\|_{L^2}^2\leqslant C_0\|\nabla u+(\nabla u)^T\|_{L^2}^2,
$$
which combining with $\eqref{G1}$, $\eqref{xy}$ and $\eqref{dbnlg}$, $J_0$ is estimated by
\begin{align}
\sup_{t\in[0,T]}|J_0|&=\sup_{t\in[0,T]}\left|\int B\cdot\nabla B\cdot u \mathrm{~d}x\right|\nonumber\\
&\leqslant\frac{\underline{\mu}}{16C_0}\sup_{t\in[0,T]}\|\nabla u\|_{L^2}^2+C\sup_{t\in[0,T]}\|B\|_{L^2}\|\nabla B\|_{L^2}^3\nonumber\\
&\leqslant\frac{\underline{\mu}}{16C_0}\sup_{t\in[0,T]}\|\nabla u\|_{L^2}^2+C\|\nabla B_0\|_{L^2}^2.\nonumber
\end{align}
Thus, integrating $\eqref{duhb}$ over (0,T) and using estimate $J_0$, $\eqref{dug1}$, $R_1$-$R_4$, we reduce that
\begin{align}\label{dugw}
\sup _{t \in[0,T]} \|\nabla u\|_{L^2}^2+\int_0^T \|\rho^{1 / 2} u_t\|_{L^2}^2\mathrm{~d} t \leqslant C\left(\|\nabla u_0\|_{L^2}^2+\|\nabla B_0\|_{L^2}^2\right) .
\end{align}

In particular, multiplying $\eqref{duhb}$ by $t^{\frac{6-2p}{p}-\delta}$, we get
\begin{align}\label{afdu}
&\frac{\mathrm{d}}{\mathrm{d} t} \int t^{\frac{6-2p}{p}-\delta}\mu(\rho)\left|\nabla u+(\nabla u)^T\right|^2 \mathrm{d} x+t^{\frac{6-2p}{p}-\delta}\|\rho^{1 / 2} u_t\|_{L^2}^2 \nonumber\\
&\leqslant \frac{\mathrm{d}}{\mathrm{d} t} \left(t^{\frac{6-2p}{p}-\delta}J_0\right)+C_1\left(\|\nabla u\|_{L^2}^4+\|\nabla B\|_{L^2}\|\Delta B\|_{L^2}\right)t^{\frac{6-2p}{p}-\delta}\|\nabla u\|_{L^2}^2\nonumber\\
&\quad+C\sum_{i=1}^4t^{\frac{6-2p}{p}-\delta}R_i(t)+Ct^{\frac{6-3p}{p}-\delta}\left(\|\nabla u\|_{L^2}^2+|J_0|\right).
\end{align}
Similar to the above estimate of $R_1$-$R_4$, combining with $\eqref{xy}$, $\eqref{dbnlg}$, $\eqref{dbsj}$ and $\eqref{usj}$, gives
$$
\begin{aligned}
&\int_0^Tt^{\frac{6-2p}{p}-\delta}R_1(t)\mathrm{~d} t\\
&=\int_0^T t^{\frac{6-2p}{p}-\delta}\|\nabla u\|_{L^2}^{3/2}\|\nabla B\|_{L^2}^2\mathrm{~d} t\\
&\leqslant\sup_{t\in[0,T]} t^{\frac{3+p}{4p}-\frac{\delta}{4}}\|\nabla B\|_{L^2}^{3/2}\left(\int_0^T t^{\frac{6-3p}{2p}-\delta}\|\nabla B\|_{L^2}^2\mathrm{~d} t\right)^{1/4}\left(\int_0^T t^{\frac{12-5p}{2p}-\frac{2\delta}{3}}\|\nabla u\|_{L^2}^2\mathrm{~d} t\right)^{3/4}\\
& \leqslant C(\delta),
\end{aligned}
$$
$$
\begin{aligned}
&\int_0^Tt^{\frac{6-2p}{p}-\delta}R_2(t)\mathrm{~d} t\\
&=\int_0^T t^{\frac{6-2p}{p}-\delta}\|\nabla B\|_{L^2}^6\mathrm{~d} t\\
&\leqslant \sup_{t\in[0,1]}\left(t^{\frac{6-2p}{p}-\delta}\|\nabla B\|_{L^2}^4\right)\int_0^1\|\nabla B\|_{L^2}^2\mathrm{~d} t+\sup_{t\in[1,T]}\left(t^{\frac{6-p}{2p}-\frac{\delta}{3}}\|\nabla B\|_{L^2}^2\right)^3\int_1^T t^{-\frac{6+p}{2p}}\mathrm{~d} t\quad\\
& \leqslant C(\delta),
\end{aligned}
$$
$$
\begin{aligned}
&\int_0^Tt^{\frac{6-2p}{p}-\delta}R_3(t)\mathrm{~d} t\\
&=\int_0^T t^{\frac{6-2p}{p}-\delta}\|B_t\|_{L^2}\|\nabla B\|_{L^2}^{1/2}\|\Delta B\|_{L^2}^{1/2}\|\nabla u\|_{L^2}\mathrm{~d} t\\
&\leqslant \sup_{t\in[0,T]}t^{\frac{6-2p}{2p}-\frac{\delta}{2}}\|\nabla u\|_{L^2}\left(\int_0^T t^{\frac{6-p}{2p}-\frac{\delta}{2}}\|B_t\|_{L^2}^2\mathrm{~d} t\right)^{1/2}\left(\int_0^T t^{\frac{6-3p}{2p}-\frac{\delta}{2}}\|\nabla B\|_{L^2}^2\mathrm{~d} t\right)^{1/4}\quad\quad\\
&\quad\quad\quad \ \times\left(\int_0^T t^{\frac{6-3p}{2p}-\frac{\delta}{2}}\|\Delta B\|_{L^2}^2\mathrm{~d} t\right)^{1/4}\\
& \leqslant \frac{\underline{\mu}}{16C_0}\sup_{t\in[0,T]}t^{\frac{6-2p}{p}-\delta}\|\nabla u\|_{L^2}^2+C(\delta),
\end{aligned}
$$
and
$$
\begin{aligned}
&\int_0^Tt^{\frac{6-2p}{p}-\delta}R_4(t)\mathrm{~d} t\\
&=\int_0^T t^{\frac{6-2p}{p}-\delta}\|B_t\|_{L^2}\|\nabla B\|_{L^2}\|\nabla u\|_{L^2}\mathrm{~d} t\\
&\leqslant C\sup_{t\in[0,T]}t^{\frac{6-p}{4p}-\frac{\delta}{2}}\|\nabla B\|_{L^2}\left(\int_0^Tt^{\frac{6-p}{2p}-\frac{\delta}{2}}\|B_t\|_{L^2}^2\mathrm{~d} t\right)^{1/2}\left(\int_0^T t^{\frac{12-5p}{2p}-\frac{\delta}{2}}\|\nabla u\|_{L^2}^2\mathrm{~d} t\right)^{1/2}\quad\quad\\
& \leqslant C(\delta).
\end{aligned}
$$
It follows from $\eqref{xy}$, $\eqref{dbnlg}$, $\eqref{dbsj}$ and $\eqref{usj}$, $J_0$ is estimated by
\begin{align}
&\sup_{t\in[0,T]}t^{\frac{6-2p}{p}-\delta}|J_0|\nonumber\\
&\leqslant\frac{\underline{\mu}}{16C_0}\sup_{t\in[0,T]}t^{\frac{6-2p}{p}-\delta}\|\nabla u\|_{L^2}^2+C\sup_{t\in[0,T]}t^{\frac{6-2p}{p}-\delta}\|B\|_{L^2}\|\nabla B\|_{L^2}^3\nonumber\\
&\leqslant\frac{\underline{\mu}}{16C_0}\sup_{t\in[0,T]}t^{\frac{6-2p}{p}-\delta}\|\nabla u\|_{L^2}^2+C\sup_{t\in[0,T]}t^{\frac{6-3p}{4p}-\frac{\delta}{2}}\|B\|_{L^2}\sup_{t\in[0,T]}t^{\frac{18-5p}{4p}-\frac{\delta}{2}}\|\nabla B\|_{L^2}^3\nonumber\\
&\leqslant \frac{\underline{\mu}}{16C_0}\sup_{t\in[0,T]}t^{\frac{6-2p}{p}-\delta}\|\nabla u\|_{L^2}^2+C(\delta),\nonumber
\end{align}
and
$$
\begin{aligned}
&\int_0^Tt^{\frac{6-3p}{p}-\delta}\left(\|\nabla u\|_{L^2}^2+|J_0|\right)\mathrm{~d} t\\
&\leqslant C\int_0^T t^{\frac{6-3p}{p}-\delta}\left(\|\nabla u\|_{L^2}^2+\|B\|_{L^2}\|\nabla B\|_{L^2}^3\right)\mathrm{~d} t\\
&\leqslant C(\delta)+C\sup_{t\in[0,T]}t^\frac{6-3p}{4p}\|B\|_{L^2}\sup_{t\in[0,T]}t^{\frac{6-3p}{4p}-\frac{\delta}{2}}\|\nabla B\|_{L^2}\int_0^Tt^{\frac{6-3p}{2p}-\frac{\delta}{2}}\|\nabla B\|_{L^2}^2\mathrm{~d} t\\
&\leqslant C(\delta).
\end{aligned}
$$
Hence, integrating $\eqref{afdu}$ over (0,T) and using estimate $J_0$, $\eqref{dug1}$, $R_1$-$R_4$, we reduce that
\begin{align}
\sup _{t \in[0,T]}t^{\frac{6-2p}{p}-\delta}\|\nabla u\|_{L^2}^2+\int_0^T t^{\frac{6-2p}{p}-\delta}\|\rho^{1 / 2} u_t\|_{L^2}^2 \mathrm{~d} t \leqslant C(\delta) .\label{thu2g}
\end{align}
The proof of Lemma 3.5 is finished.
\end{proof}

\begin{lemma}
Let $(\rho, u, B, P)$ be a smooth solution to $\eqref{MHD}$-$\eqref{ydxw}$ satisfying $\eqref{p1}$. Then there exists a generic positive constant $C$ depending only on $p, q, \bar{\rho}, \underline{\mu}, \bar{\mu}, \nu, \Omega, \|\rho_0\|_{L^{3/2}}$, $\|B_0\|_{L^{p}}$ and $M$ such that for any $0<\delta<1/8$, one obtain
\begin{align}
& \sup_{t \in[0,T]} t\left(\|\rho^{1 / 2} u_t\|_{L^2}^2+\|B_t\|_{L^2}^2+\|\Delta B\|_{L^2}^2\right)+\int_0^T t\left(\|\nabla u_t\|_{L^2}^2+\|\nabla B_t\|_{L^2}^2\right) \mathrm{~d} t \nonumber\\
&\leqslant C\left(\|\nabla u_0\|_{L^2}^2+\|\nabla B_0\|_{L^2}^2\right),\label{ddb}\\
&\sup _{t \in[0, T]} t^{\frac{6-p}{p}-\delta}\|\rho^{1 / 2} u_t\|_{L^2}^2+\int_0^T t^{\frac{6-p}{p}-\delta}\|\nabla u_t\|_{L^2}^2 \mathrm{~d} t \leqslant C(\delta) ,\label{t3mg}\\
&\sup _{t \in[0, T]} t^{\frac{6+p}{2p}-\delta}\left(\|B_t\|_{L^2}^2+\|\Delta B\|_{L^2}^2\right)+\int_0^T t^{\frac{6+p}{2p}-\delta}\|\nabla B_t\|_{L^2}^2 \mathrm{~d} t \leqslant C(\delta),\label{t2mg}
\end{align}
provided $\|\nabla u_0\|_{L^2}+||\nabla B_0\|_{L^2}\leqslant 1$.
\end{lemma}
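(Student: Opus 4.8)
The strategy is the standard one for this type of problem: differentiate $\eqref{MHD}_2$ and $\eqref{MHD}_3$ in time and run a weighted energy estimate for the pair $(u_t,B_t)$, bootstrapping on Lemmas 3.2--3.5. First I would differentiate the momentum equation in $t$, use the mass equation in the forms $\rho_t=-\operatorname{div}(\rho u)$ and $[\mu(\rho)]_t=-u\cdot\nabla\mu(\rho)$, test against $u_t$, and integrate by parts (the slip condition $\eqref{ubj}$ and the Korn-type inequality $\|\nabla u_t\|_{L^2}^2\leqslant C_0\|\nabla u_t+(\nabla u_t)^T\|_{L^2}^2$ kill/recover the boundary terms), producing a schematic identity
$$
\frac12\frac{d}{dt}\|\rho^{1/2}u_t\|_{L^2}^2+c\,\|\nabla u_t\|_{L^2}^2
\leqslant \Big|\int\rho_t\big(|u_t|^2+u\cdot\nabla u\cdot u_t\big)\,dx\Big|
+\Big|\int\rho u_t\cdot\nabla u\cdot u_t\,dx\Big|
+\Big|\int (u\cdot\nabla\mu(\rho))\,\nabla u:\nabla u_t\,dx\Big|
+\Big|\int (B\cdot\nabla B)_t\cdot u_t\,dx\Big| .
$$
Each term on the right is bounded by Hölder, the Gagliardo--Nirenberg inequality $\eqref{G1}$, the bound $\sup_t\|\nabla u\|_{L^2}\leqslant C(\|\nabla u_0\|_{L^2}+\|\nabla B_0\|_{L^2})$ of $\eqref{dux}$, and the elliptic bound $\eqref{tht}$ for $\|\nabla u\|_{H^1}+\|P\|_{H^1}$; for the viscosity term one uses $\|\nabla\mu(\rho)\|_{L^q}\leqslant 4M$ from $\eqref{mmdq}$ and interpolation to absorb a small multiple of $\|\nabla u_t\|_{L^2}^2$. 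In parallel I would differentiate $\eqref{MHD}_3$ in $t$, test against $B_t$, and use $\operatorname{div}B_t=0$, the boundary conditions and the div--curl inequality of Remark 2.1 (together with $\eqref{dbkg}$) to get
$$
\frac12\frac{d}{dt}\|B_t\|_{L^2}^2+\tfrac{\nu}{2}\|\nabla B_t\|_{L^2}^2
\leqslant \Big|\int\big(B_t\cdot\nabla u-u_t\cdot\nabla B+B\cdot\nabla u_t\big)\cdot B_t\,dx\Big| ,
$$
whose right side is controlled via $\|\nabla u\|_{L^2}$, $\|\nabla B\|_{L^2}$, $\|\Delta B\|_{L^2}$ and a small fraction of $\|\nabla u_t\|_{L^2}^2+\|\nabla B_t\|_{L^2}^2$.

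Adding the two inequalities (with a suitable weight on the second) and absorbing the small terms produces a differential inequality of the form
$$
\frac{d}{dt}\big(\|\rho^{1/2}u_t\|_{L^2}^2+\|B_t\|_{L^2}^2\big)+c\big(\|\nabla u_t\|_{L^2}^2+\|\nabla B_t\|_{L^2}^2\big)\leqslant G(t)\big(\|\rho^{1/2}u_t\|_{L^2}^2+\|B_t\|_{L^2}^2\big)+H(t),
$$
where $\int_0^T(G+H)\,dt<\infty$ by Lemmas 3.2--3.5. Multiplying by $t$ and invoking Grönwall (the extra term $\|\rho^{1/2}u_t\|_{L^2}^2+\|B_t\|_{L^2}^2$ created by $\frac{d}{dt}(t\,\cdot)$ being integrable by $\eqref{dux}$) yields the bounds on $\sup_t t(\|\rho^{1/2}u_t\|_{L^2}^2+\|B_t\|_{L^2}^2)$ and $\int_0^T t(\|\nabla u_t\|_{L^2}^2+\|\nabla B_t\|_{L^2}^2)\,dt$ in $\eqref{ddb}$; the $\sup_t t\|\Delta B\|_{L^2}^2$ bound then follows from the elliptic estimate $\|\Delta B\|_{L^2}\leqslant C(\|B_t\|_{L^2}+\|B\cdot\nabla u\|_{L^2}+\|u\cdot\nabla B\|_{L^2})$ applied to $\eqref{MHD}_3$, with $\eqref{xy}$ and $\eqref{dbnlg}$. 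For the faster-decay estimates the weights differ between the velocity and magnetic parts, so I would proceed in two stages: first multiply the combined inequality by $t^{\frac{6+p}{2p}-\delta}$ to obtain $\eqref{t2mg}$ (the $\Delta B$ part again via the elliptic estimate); then redo the $u_t$-estimate alone, multiplied by the larger weight $t^{\frac{6-p}{p}-\delta}$, now using $\eqref{t2mg}$ to absorb the magnetic source terms, to get $\eqref{t3mg}$. In each stage the weight-derivative term is of lower order, integrable by $\eqref{tdugj}$ and $\eqref{dbsj}$, and every weighted source term is integrated exactly as in Lemma 3.5 using $\eqref{dbsj}$--$\eqref{usj}$ and $\eqref{tdugj}$, after checking that the resulting exponents stay above $-1$ for $p\in[1,12/7)$ and $\delta<1/8$, as in the proofs of Lemmas 3.4--3.5.

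The main obstacle will be the mixed coupling terms $\int(B\cdot\nabla B)_t\cdot u_t\,dx$ and $\int(u_t\cdot\nabla B+B\cdot\nabla u_t)\cdot B_t\,dx$ carrying the correct time weights: one must peel off $\|\nabla u_t\|_{L^2}$ and $\|\nabla B_t\|_{L^2}$ with arbitrarily small constants and route the remaining factors through $\eqref{G1}$ so that the leftover powers of $t$ are dominated by the decay rates of $\|\nabla B\|_{L^2}$, $\|\Delta B\|_{L^2}$ and $\|\nabla u\|_{L^2}$ already established — near $t=0$ one relies on the unweighted bound $\eqref{ddb}$ and on $\int_0^T\|\nabla u\|_{L^2}^4\,dt<\infty$ from $\eqref{p1}$. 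A secondary difficulty is the viscosity term $\int(u\cdot\nabla\mu(\rho))\,\nabla u:\nabla u_t\,dx$, which is special to the density-dependent setting and requires the $L^q$-control of $\nabla\mu(\rho)$ from $\eqref{mmdq}$ together with a careful interpolation of $\nabla u$ supplied by $\eqref{tht}$.
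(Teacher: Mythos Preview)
Your proposal is correct and follows the same overall strategy as the paper: differentiate \eqref{MHD}$_2$ and \eqref{MHD}$_3$ in time, test against $u_t$ and $B_t$, and run time-weighted Gr\"onwall arguments bootstrapped on Lemmas 3.2--3.5. The treatment of the viscosity term via $\|\nabla\mu(\rho)\|_{L^q}$ and of the coupling terms via \eqref{G1} is exactly what the paper does.

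The one structural difference is the \emph{order} in which you obtain \eqref{t3mg} and \eqref{t2mg}. You propose to first run the \emph{combined} $(u_t,B_t)$ inequality with the smaller weight $t^{\frac{6+p}{2p}-\delta}$ to get \eqref{t2mg}, and only then isolate the $u_t$ inequality with the larger weight $t^{\frac{6-p}{p}-\delta}$, using \eqref{t2mg} to absorb the magnetic source terms. The paper does the opposite: it observes that in the $u_t$ differential inequality (their \eqref{tmuh}) the magnetic coupling $K_5=\int(B\cdot\nabla B)_t\cdot u_t\,dx$ can be estimated so that only $\|B_t\|_{L^2}^2$ (never $\|\nabla B_t\|_{L^2}$) appears on the right, hence \eqref{tmuh} is already decoupled from the $B_t$ energy and can be closed at the higher weight $t^{\frac{6-p}{p}-\delta}$ directly from Lemmas 3.2--3.5, yielding \eqref{t3mg} first. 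The $B_t$ inequality \eqref{bttg}, whose dominant source is the naked $\|\nabla u_t\|_{L^2}^2$, is then closed at weight $t^{\frac{6+p}{2p}-\delta}$ using \eqref{t3mg}. Both orderings work; the paper's is slightly cleaner because it avoids having to carry the full combined system at the intermediate weight, while yours has the advantage that the absorption of $\|\nabla u_t\|_{L^2}^2$ into the left side is automatic once the two inequalities are added.
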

\begin{proof}
First, operating $\partial_t$ to $\eqref{MHD}_2$ yields that
\begin{equation}\label{tmiu}
\begin{split}
& \rho u_{tt}+\rho u \cdot \nabla u_t-\operatorname{div}\left(2 \mu(\rho)\cdot d_t\right)+\nabla P_t \\
&=-\rho_t u_t-(\rho u)_t \cdot \nabla u+\operatorname{div}\left(2\mu(\rho)_t\cdot d\right)+(B \cdot \nabla B)_t .
\end{split}
\end{equation}

On the one hand, multiplying the above equality by $u_t$, we obtain after using integration by parts and $\eqref{MHD}_1$ that
\begin{equation}\label{tmus}
\begin{split}
& \frac{1}{2} \frac{\mathrm{d}}{\mathrm{d} t} \int \rho|u_t|^2 \mathrm{~d} x+2 \int \mu(\rho)|d_t|^2 \mathrm{~d} x \\
&=-2 \int \rho u \cdot \nabla u_t \cdot u_t \mathrm{~d} x-\int \rho u \cdot \nabla\left(u \cdot \nabla u \cdot u_t\right) \mathrm{d} x-\int \rho u_t \cdot \nabla u \cdot u_t \mathrm{~d} x \\
&\quad+\int 2 u \cdot \nabla \mu(\rho) d \cdot \nabla u_t \mathrm{~d} x+\int(B \cdot \nabla B)_t \cdot u_t \mathrm{~d} x \\
&\triangleq \sum_{i=1}^5 K_i .
\end{split}
\end{equation}
Now, we will use $\eqref{G1}$ and $\eqref{midu}$ to estimate each term on the right hand of $\eqref{tmus}$ as follows:
$$
\begin{aligned}
|K_1|+|K_3| & \leqslant C\|\rho^{1/2} u_t\|_{L^3}\|\nabla u_t\|_{L^2}\|u\|_{L^6}+C\|\rho^{1/2} u_t\|_{L^3}\|\nabla u\|_{L^2}\|u_t\|_{L^6} \nonumber\\
& \leqslant C\|\rho^{1/2} u_t\|_{L^2}^{1/2}\|\nabla u_t\|_{L^2}^{3/2}\|\nabla u\|_{L^2} \\
& \leqslant \frac{1}{8} \underline{\mu}\|\nabla u_t\|_{L^2}^2+C\|\rho^{1/2} u_t\|_{L^2}^4+C\|\nabla u\|_{L^2}^8 .\nonumber
\end{aligned}
$$
Next by Lemma 2.2, $\eqref{midu}$, $\eqref{mmdq}$ and $\eqref{tht}$
$$
\begin{aligned}
|K_2|
&\leqslant C\int \rho |u||u_t|\left(|\nabla u|^2+|u||\nabla^2 u|\right)\mathrm{~d} x+\int\rho|u|^2|\nabla u||\nabla u_t|\mathrm{~d} x \nonumber\\
&\leqslant C\|u\|_{L^6}\|u_{t}\|_{L^6}\left(\|\nabla u\|_{L^3}^2+\|u\|_{L^6}\|\nabla^2 u\|_{L^2}\right)+C\|u\|_{L^6}^2\|\nabla u\|_{L^6}\|\nabla u_t\|_{L^2} \nonumber\\
& \leqslant C\|\nabla u_t\|_{L^2}\|\nabla^2 u\|_{L^2}\|\nabla u\|_{L^2}^2 \\
& \leqslant \frac{1}{8} \underline\mu\|\nabla u_t\|_{L^2}^2+C\|\nabla^2 u\|_{L^2}^2\|\nabla u\|_{L^2}^4 \nonumber\\
& \leqslant \frac{1}{8} \underline\mu\|\nabla u_t\|_{L^2}^2+C\|\rho^{1/2} u_t\|_{L^2}^4+C\|\nabla u\|_{L^2}^{10}+C\|\nabla u\|_{L^2}^8+C\|B\|_{L^2}^2\|\nabla B\|_{L^2}^6\nonumber\\
&\quad+C\|\nabla B\|_{L^2}^8+C\|\nabla B\|_{L^2}^6\|\Delta B\|_{L^2}^2, \nonumber
\end{aligned}
$$
$$
\begin{aligned}
|K_4|
&\leqslant C\|u\|_{L^\infty}\|\nabla\mu(\rho)\|_{L^q}\|\nabla u\|_{L^{2q/(q-2)}}\|\nabla u_t\|_{L^2}\nonumber\\
&\leqslant C\|u\|_{L^6}^{1/2}\|\nabla u\|_{L^6}^{1/2}\|\nabla u\|_{L^2}^{(q-3)/q}\|\nabla^2 u\|_{L^2}^{3/q}\|\nabla u_t\|_{L^2}\nonumber\\
& \leqslant \frac{1}{8} \underline\mu\|\nabla u_t\|_{L^2}^2+C\|\nabla u\|_{L^2}^4+\|\nabla u\|_{L^2}\|\nabla^2 u\|_{L^2}^3 \\
& \leqslant \frac{1}{8} \underline\mu\|\nabla u_t\|_{L^2}^2+C\|\nabla u\|_{L^2}^4+C\|\rho^{1 / 2} u_t\|_{L^2}^4+C\|\nabla u\|_{L^2}^{10}+C\|B\|_{L^2}^2\|\nabla B\|_{L^2}^6\nonumber\\
&\quad+C\|\nabla B\|_{L^2}^8+C\|\nabla B\|_{L^2}^6\|\Delta B\|_{L^2}^2,\nonumber
\end{aligned}
$$
and
$$
\begin{aligned}
|K_5| \leqslant \frac{1}{8} \underline\mu\|\nabla u_t\|_{L^2}^2+C\|\nabla B\|_{L^2}\|\Delta B\|_{L^2}\|B_t\|_{L^2}^2+C\|\nabla B\|_{L^2}^2\|B_t\|_{L^2}^2 .
\end{aligned}
$$
Substituting $K_1$, $K_2$, $K_3$, $K_4$ and $K_5$ into $\eqref{tmus}$ gives
\begin{align}\label{tmuh}
& \frac{\mathrm{d}}{\mathrm{d} t}\|\rho^{1 / 2} u_t\|_{L^2}^2+\underline\mu\|\nabla u_t\|_{L^2}^2\nonumber\\
& \leqslant C\|\rho^{1 / 2} u_t\|_{L^2}^4+C\|\nabla u\|_{L^2}^4+C\|\nabla B\|_{L^2}^6\|\Delta B\|_{L^2}^2+C\|\nabla B\|_{L^2}^6\\
& \quad+C\|\nabla B\|_{H^1}^2\|B_t\|_{L^2}^2.\nonumber
\end{align}
Hence, multiplying $\eqref{tmuh}$ by $t$, integrating it over $(0,T)$, and taking $\eqref{xy}$, $\eqref{dbnlg}$, $\eqref{dbnlt}$, $\eqref{usj}$ and $\eqref{dux}$ into account, we immediately obtain
\begin{align}\label{utg3}
\sup_{t \in[0,T]} t\|\rho^{1 / 2} u_t\|_{L^2}^2+\int_0^T t\|\nabla u_t\|_{L^2}^2 \mathrm{~d} t\leqslant C\left(\|\nabla u_0\|_{L^2}^2+\|\nabla B_0\|_{L^2}^2\right).
\end{align}
Furthermore, we multiplying $\eqref{tmuh}$ by $t^{\frac{6-p}{p}-\delta}$, which together with $\eqref{dbnlg}$, $\eqref{dbsj}$, $\eqref{usj}$, $\eqref{dux}$ and $\eqref{tdugj}$ yield
\begin{align}\label{putt}
&\sup _{t \in[0, T]} t^{\frac{6-p}{p}-\delta}\|\rho^{1 / 2} u_t\|_{L^2}^2+\int_0^T t^{\frac{6-p}{p}-\delta}\|\nabla u_t\|_{L^2}^2 \mathrm{~d} t \leqslant C(\delta) .
\end{align}

On the other hand, operating $\partial_t$ to $\eqref{MHD}_3$ yields that
\begin{align}\label{btt}
B_{t t}-\nu \Delta B_t=(B \cdot \nabla u-u \cdot \nabla B)_t,
\end{align}
multiplying $\eqref{btt}$ by $B_t$, we obtain after using integration by parts, $\eqref{dvgja}$ and the H\"older's inequality that
\begin{align}\label{bttg}
&\frac{\mathrm{d}}{\mathrm{d} t}\|B_t\|_{L^2}^2+2\nu\|\operatorname{curl} B_t\|_{L^2}^2 \nonumber\\
& \leqslant C\|B_t\|_{L^3}\|\nabla u\|_{L^2}\|B_t\|_{L^6}+C\|B\|_{L^\infty}\|\nabla u_t\|_{L^2}\|B_t\|_{L^2}+C\|u_t\|_{L^6}\|\nabla B\|_{L^3}\|B_t\|_{L^2} \nonumber\\
& \leqslant C\|\nabla u_t\|_{L^2}^2+C\left(\|\nabla u\|_{L^2}^4+\|\nabla B\|_{H^1}^2\right)\|B_t\|_{L^2}^2.
\end{align}
It follows from Eq. $\eqref{MHD}_3$ and Lemma 2.2 that
\begin{align}
\|\Delta B\|_{L^2}^2 \leqslant C\|B_t\|_{L^2}^2+C\|\nabla u\|_{L^2}^4\|\nabla B\|_{L^2}^2 .
\end{align}
Hence, multiplying $\eqref{bttg}$ by $t$, integrating it over $(0,T)$, and taking $\eqref{p1}$, $\eqref{xy}$, $\eqref{dbnlg}$ and $\eqref{utg3}$ into account, we immediately obtain
$$
\sup_{t \in[0,T]} t\left(\|B_t\|_{L^2}^2+\|\Delta B\|_{L^2}^2\right)+\int_0^T t\|\nabla B_t\|_{L^2}^2 \mathrm{~d} t\leqslant C\left(\|\nabla u_0\|_{L^2}^2+\|\nabla B_0\|_{L^2}^2\right).
$$
Furthermore, multiplying $\eqref{bttg}$ by $t^{\frac{6+p}{2p}-\delta}$, which together with $\eqref{p1}$, $\eqref{xy}$, $\eqref{dbnlg}$, $\eqref{dbsj}$, $\eqref{usj}$, $\eqref{dux}$, $\eqref{tdugj}$ and $\eqref{putt}$ yield
$$
\begin{aligned}
\sup _{t \in[0, T]} t^{\frac{6+p}{2p}-\delta}\left(\|B_t\|_{L^2}^2+\|\Delta B\|_{L^2}^2\right)+\int_0^T t^{\frac{6+p}{2p}-\delta}\|\nabla B_t\|_{L^2}^2 \mathrm{~d} t \leqslant C(\delta).
\end{aligned}
$$
The proof of Lemma 3.6 is finished.
\end{proof}

\begin{lemma}
Let $(\rho, u, B, P)$ be a smooth solution to $\eqref{MHD}$-$\eqref{ydxw}$ satisfying $\eqref{p1}$. Then there exists a generic positive constant $C$ depending only on $p, q, \overline{\rho}, \underline\mu, \overline{\mu}, \nu, \Omega, \|\rho_0\|_{L^{3/2}}$, $\|B_0\|_{L^{p}}$ and $M$ such that for $\beta\triangleq\frac{12-7p}{12-2p}$, one obtain
\begin{align}\label{duwq}
\int_0^T\|\nabla u\|_\infty \mathrm{~d} t \leqslant C\left(\|\nabla u_0\|_{L^2}+\|\nabla B_0\|_{L^2}\right)^{\beta},
\end{align}
provided $\|\nabla u_0\|_{L^2}+||\nabla B_0\|_{L^2}\leqslant 1$.
\end{lemma}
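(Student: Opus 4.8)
The plan is to bound $\|\nabla u\|_{L^\infty}$ through the Stokes regularity estimate of Lemma~\ref{le23} together with the Gagliardo--Nirenberg inequality \eqref{G2}, and then integrate in time using the weighted estimates already obtained in Lemmas 3.2--3.6. First I would view $\eqref{MHD}_2$ as the stationary Stokes system $-\div(2\mu(\rho)d)+\nabla P = F$ with $F = -\rho u_t-\rho u\cdot\nabla u + B\cdot\nabla B$, and apply part (1) of Lemma~\ref{le23} with an exponent $r\in(3,s)$ (recall $s=\min\{6,q\}$) to get
$$
\|\nabla^2 u\|_{L^r}+\|\nabla P\|_{L^r}\leqslant C\|F\|_{L^r}+C\big(\|\nabla\mu(\rho)\|_{L^q}^{\frac{q(5r-6)}{2r(q-3)}}+1\big)\|F\|_{L^{6/5}},
$$
and then use \eqref{G2} with a suitable $q$ in the form $\|\nabla u\|_{C(\bar\Omega)}\leqslant C\|\nabla u\|_{L^2}^{\theta}\|\nabla^2 u\|_{L^r}^{1-\theta}$ for some $\theta=\theta(r)\in(0,1)$. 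The right-hand side $\|F\|_{L^{6/5}\cap L^r}$ is controlled, exactly as in the derivation of \eqref{tht}, by $\|\rho^{1/2}u_t\|_{L^2}$, $\|\nabla u\|_{L^2}$, $\|\nabla B\|_{L^2}$, $\|\Delta B\|_{L^2}$ and $\|\nabla u_t\|_{L^2}$ (the last coming in once we also want the $L^r$ norm of $u_t$, via $\|u_t\|_{L^r}\leqslant C\|\nabla u_t\|_{L^2}$ for $r\le 6$), together with \eqref{mmdq} to absorb $\|\nabla\mu(\rho)\|_{L^q}$ into the constant.

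Next I would split the time integral as $\int_0^T = \int_0^1 + \int_1^T$. On $(0,1)$ the weights $t^{\alpha}$ appearing in \eqref{dbnlt}, \eqref{ddb}, \eqref{t3mg}, \eqref{t2mg} are harmless, so one uses the non-weighted bounds \eqref{xy}, \eqref{dbnlg}, \eqref{dux} together with the $t$-weighted bounds near $t=0$ (e.g. $\sup t\|\rho^{1/2}u_t\|_{L^2}^2\le C$, $\sup t\|\Delta B\|_{L^2}^2\le C$) and Hölder in $t$ to control $\int_0^1\|\nabla u\|_{L^\infty}\,dt$ by a power of $\|\nabla u_0\|_{L^2}+\|\nabla B_0\|_{L^2}$; note that \eqref{dux}, \eqref{dbnlg} already carry the factor $\|\nabla u_0\|_{L^2}^2+\|\nabla B_0\|_{L^2}^2$, which after taking square roots and interpolating produces the exponent $\beta$. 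On $(1,T)$ one instead feeds in the decay estimates \eqref{dbsj}, \eqref{usj}, \eqref{tdugj}, \eqref{t3mg}, \eqref{t2mg}: each ingredient of $\|F\|$ decays like a negative power of $t$, and after raising to the relevant power in the Gagliardo--Nirenberg exponent the integrand is $O(t^{-1-\eta})$ for some $\eta>0$ (this is precisely where $p<12/7$ and the specific exponents $\frac{6-p}{p}-\delta$, $\frac{12-2p}{p}-\delta$ etc.\ are tuned), hence the integral over $(1,\infty)$ converges to a constant, and this constant can be made small with $\varepsilon_0$; absorbing it into the $(0,1)$-part gives the overall bound $C(\|\nabla u_0\|_{L^2}+\|\nabla B_0\|_{L^2})^\beta$ with $\beta=\frac{12-7p}{12-2p}$.

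The main obstacle I expect is bookkeeping of exponents: one must choose the Stokes exponent $r$ and the interpolation parameter in \eqref{G2} so that simultaneously (i) the time integral near $t=0$ is finite against the singular weights $t^{-\alpha}$ implicit in the $t$-weighted a priori bounds, (ii) the time integral near $t=\infty$ converges, using the decay rates which themselves degrade as $p\to 12/7$, and (iii) the total power of $\|\nabla u_0\|_{L^2}+\|\nabla B_0\|_{L^2}$ that emerges after Hölder and interpolation is exactly $\beta>0$, so that the right-hand side is genuinely small when $\varepsilon_0$ is small. The coupling with the magnetic field enters only through the already-established bounds on $\|\nabla B\|_{L^2}$, $\|\Delta B\|_{L^2}$, $\|B\|_{L^2}$ and their weighted versions, so no new estimate for $B$ is needed here; the work is to verify that every term in $\|F\|_{L^{6/5}\cap L^r}$, when combined with the Gagliardo--Nirenberg exponent, yields a time-integrable and suitably small contribution.
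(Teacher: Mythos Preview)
Your approach is essentially the same as the paper's: view $\eqref{MHD}_2$ as a Stokes system, apply Lemma~\ref{le23} with an exponent $r\in(3,\min\{6,q\})$, control $\|\nabla u\|_{L^\infty}$ by $\|\nabla u\|_{L^2}+\|\nabla^2u\|_{L^r}$, and split $\int_0^T=\int_0^1+\int_1^T$. One correction to your narrative: the critical exponent $\beta=\frac{12-7p}{12-2p}$ does \emph{not} come from the $(0,1)$-piece (which in the paper yields the exponent $1$), but from the $(1,T)$-integral of the magnetic contribution $\|B\|_{L^2}^{1/2}\|\nabla B\|_{L^2}^{3/2}$ in the Stokes forcing; one pulls out $\sup_{t\geqslant 1}\|\nabla B\|_{L^2}^{(12-7p)/(12-2p)}\leqslant C\|\nabla B_0\|_{L^2}^{\beta}$ via \eqref{dbnlg} and uses the weighted decay \eqref{dbsj}, \eqref{usj} to make the remaining time integral converge. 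The other terms on $(1,T)$ (involving $\|\nabla u_t\|_{L^2}$, $\|\nabla u\|_{L^2}^3$, $\|\Delta B\|_{L^2}^2$) produce exponents $\geqslant 1/2$, so $\beta$ is the bottleneck since $\beta\leqslant 1/2$ for $p\in[1,12/7)$.
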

\begin{proof}
First, it follows from Lemma $2.2$ that for any $\alpha \in[2, \min \{6, q\}]\cap[2,6)$,
\begin{align}
& \|\rho u_t+\rho u \cdot \nabla u-B \cdot \nabla B\|_{L^\alpha} \nonumber\\
& \leqslant C\|\rho^{1 / 2} u_t\|_{L^2}^{(6-\alpha) / 2 \alpha}\|\rho^{1 / 2} u_t\|_{L^6}^{(3\alpha-6)/ 2 \alpha}+C\|u\|_{L^6}\|\nabla u\|_{L^{6\alpha/(6-\alpha)}}+C\|B\|_{L^\infty}\|\nabla B\|_{L^\alpha} \nonumber\\
& \leqslant C\|\rho^{1 / 2} u_t\|_{L^2}^{(6-\alpha) / 2 \alpha}\|\nabla u_t\|_{L^2}^{(3 \alpha-6) / 2 \alpha}+C\|\nabla u\|_{L^2}^{6(\alpha-1) /(5 \alpha-6)}\|\nabla^2 u\|_{L^\alpha}^{(4 \alpha-6) /(5 \alpha-6)} \nonumber\\
& \quad+C\|\nabla B\|_{L^2}^{3 / \alpha}\|\nabla^2 B\|_{L^2}^{(2 \alpha-3) / \alpha} \nonumber\\
& \leqslant \varepsilon\|\nabla^2 u\|_{L^\alpha}+C\|\rho^{1 / 2} u_t\|_{L^2}^{(6-\alpha) / 2 \alpha}\|\nabla u_t\|_{L^2}^{(3\alpha-6) / 2 \alpha}+C_\varepsilon\|\nabla u\|_{L^2}^{6(\alpha-1) / \alpha} \nonumber\\
& \quad+C\|\nabla B\|_{L^2}^2+C\|\nabla^2 B\|_{L^2}^2,\nonumber
\end{align}
which together with $\eqref{tht}$ gives
\begin{align}\label{du2p}
&\|\nabla^2 u\|_{L^\alpha}+\|\nabla u\|_{L^2}+\|\nabla P\|_{L^\alpha} \nonumber\\
&\leqslant C\|\rho u_t+\rho u \cdot \nabla u-B \cdot \nabla B\|_{L^{6 / 5}\cap L^\alpha}\nonumber\\
& \leqslant C\|\rho^{1 / 2} u_t\|_{L^2}^{(6-\alpha) / 2 \alpha}\|\nabla u_t\|_{L^2}^{(3 \alpha-6) / 2 \alpha}+C\|\nabla u\|_{L^2}^3+C\|\rho^{1 / 2} u_t\|_{L^2} +C\|\Delta B\|_{L^2}^2\\
& \quad+C\|\nabla B\|_{L^2}^2+C\|B\|_{L^2}^{1 / 2}\|\nabla B\|_{L^2}^{3 / 2} ,\nonumber
\end{align}
where we use $\frac{6(\alpha-1)}{\alpha}\geqslant 3$.

Then, setting
$$
r \triangleq \frac{1}{2} \min \left\{q+3, 9\right\} \in\left(3, \min \{q, 6\}\right),
$$
one derives from the $\eqref{G1}$ and $\eqref{du2p}$ that
\begin{align}\label{duwqg}
\|\nabla u\|_{L^\infty} & \leqslant C\|\nabla u\|_{L^2}+C\|\nabla^2 u\|_{L^r} \nonumber\\
& \leqslant C\|\rho^{1 / 2} u_t\|_{L^2}^{(6-r) / 2 r}\|\nabla u_t\|_{L^2}^{(3 r-6) / 2 r}+C\|\nabla u\|_{L^2}^3+C\|\rho^{1 / 2} u_t\|_{L^2} \\
&\quad+C\|\Delta B\|_{L^2}^2+C\|\nabla B\|_{L^2}^2+C\|B\|_{L^2}^{1 / 2}\|\nabla B\|_{L^2}^{3 / 2} .\nonumber
\end{align}

On the one hand, it follows from $\eqref{xy}$, $\eqref{dbnlg}$, $\eqref{dux}$ and $\eqref{ddb}$ that for $t \in(0,1]$,
$$
\begin{aligned}
\|\nabla u\|_{L^\infty} \leqslant & C\left(\|\nabla u_0\|_{L^2}+\|\nabla B_0\|_{L^2}\right)^{(6-r) / 2 r} t^{-1/2}\left(t\|\nabla u_t\|_{L^2}^2\right)^{(3 r-6) / 4r} \\
& +C\left(\|\nabla u_0\|_{L^2}+\|\nabla B_0\|_{L^2}\right)^3 +C\left(\|\nabla u_0\|_{L^2}+\|\nabla B_0\|_{L^2}\right) t^{-1/2} \\
& +C\|\Delta B\|_{L^2}^2+C\left(\|\nabla u_0\|_{L^2}+\|\nabla B_0\|_{L^2}\right)^2+C\left(\|\nabla u_0\|_{L^2}+\|\nabla B_0\|_{L^2}\right)^{3/2},
\end{aligned}
$$
which together with $\eqref{dbnlg}$ and $\eqref{ddb}$ gives
\begin{align}\label{xjduw}
& \int_0^1\|\nabla u\|_{L^\infty} \mathrm{~d} t \nonumber\\
& \leqslant\left(\|\nabla u_0\|_{L^2}+\|\nabla B_0\|_{L^2}\right)^{(6-r)/2r}\left(\int_0^1 t^{-2r/(r+6)} \mathrm{~d} t\right)^{(r+6) / 4 r}\left(\int_0^1 t\|\nabla u_t\|_{L^2}^2\mathrm{~d} t\right)^{(3r-6)/4r} \nonumber\\
& \quad+C\left(\|\nabla u_0\|_{L^2}+\|\nabla B_0\|_{L^2}\right) \nonumber\\
& \leqslant C\left(\|\nabla u_0\|_{L^2}+\|\nabla B_0\|_{L^2}\right).
\end{align}

On the other hand, using the H\"older inequality and $\eqref{midu}$, we obtain that for $t \in[1, T]$,
$$
\begin{aligned}
\|\nabla u\|_{L^\infty} &\leqslant C\|\rho\|_{L^{3 / 2}}^{(6-r) / 4r}\|\nabla u_t\|_{L^2}+C\|\nabla u\|_{L^2}^3 +C\|\rho\|_{L^{3 / 2}}\|\nabla u_t\|_{L^2}+C\|\Delta B\|_{L^2}^2\\
&\quad+C\|\nabla B\|_{L^2}^2+C\|B\|_{L^2}^{1 / 2}\|\nabla B\|_{L^2}^{3 / 2} \\
& \leqslant C\|\nabla u_t\|_{L^2}+C\|\nabla u\|_{L^2}^3+C\|\Delta B\|_{L^2}^2+C\|\nabla B\|_{L^2}^2+C\|B\|_{L^2}^{1 / 2}\|\nabla B\|_{L^2}^{3 / 2},
\end{aligned}
$$
and thus
\begin{align}\label{dduwq}
&\int_1^T\|\nabla u\|_{L^\infty}\mathrm{d} t \nonumber\\
&\leqslant C \int_1^T\left(\|\nabla u_t\|_{L^2}+\|\nabla u\|_{L^2}^3+\|\Delta B\|_{L^2}^2+\|\nabla B\|_{L^2}^2+\|B\|_{L^2}^{1/2}\|\nabla B\|_{L^2}^{3/2}\right) \mathrm{~d} t,
\end{align}
choose $\delta=\delta_2=\frac{1}{16}$ in $\eqref{t3mg}$, hence $-\frac{3}{p}+\frac{\delta_2}{2}={-\frac{96-p}{32p}}<-1$, and using $\eqref{xy}$, $\eqref{dbnlg}$, $\eqref{dux}$ and $\eqref{ddb}$, we get by direct calculations that
\begin{align}
& \int_1^T\left(\|\nabla u_t\|_{L^2}+\|\nabla u\|_{L^2}^3+\|\Delta B\|_{L^2}^2\right) \mathrm{~d} t \nonumber\\
& \leqslant \left(\int_1^Tt\|\nabla u_t\|_{L^2}^2 \mathrm{~d} t\right)^{1 / 4}\left(\int_1^T t^{\frac{6-p}{p}-\delta_2}\|\nabla u_t\|_{L^2}^2 d t\right)^{1 / 4}\left(\int_1^T t^{-\frac{3}{p}+\frac{\delta_2}{2}} \mathrm{~d} t\right)^{1 / 2} \nonumber\\
&\quad +\sup_{t\in[1,T]}\|\nabla u\|_{L^2}\int_1^T\|\nabla u\|_{L^2}^2 \mathrm{~d} t+\int_1^T\|\Delta B\|_{L^2}^2\mathrm{~d} t\label{djduw}\\
& \leqslant C\left(\|\nabla u_0\|_{L^2}+\|\nabla B_0\|_{L^2}\right)^{1 / 2},\nonumber
\end{align}
choose $\delta=\delta_3=\min\{\frac{(6-p)(12-7p)}{4p(12+3p)},\frac{1}{8}\}$ in $\eqref{dbsj}$, hence ${-\frac{12+p}{8p}+\frac{12+3p}{4(6-p)}\delta_3}={-\frac{12+9p}{16p}}<-1$,
and using $\eqref{dbnlg}$, $\eqref{dbsj}$ and $\eqref{usj}$, one obtain
\begin{align}
& \int_1^T\left(\|\nabla B\|_{L^2}^2+\|B\|_{L^2}^{1 / 2}\|\nabla B\|_{L^2}^{3 / 2}\right) \mathrm{~d} t \nonumber\\
& \leqslant C\sup _{t \in[1, T]}\|\nabla B\|_{L^2}^\frac{12-7p}{12-2p}\sup _{t \in[1, T]} t^{\frac{6-3p}{8p}-\frac{\delta_3}{4}}\left(\| B\|_{L^2}^{1/2}+\|\nabla B\|_{L^2}^{1/2}\right)\nonumber\\
&\quad\times\sup_{t\in[1,T]}\left(t^{\frac{6-p}{2p}-\delta_3}\|\nabla B\|_{L^2}^2\right)^\frac{3+2p}{12-2p}\int_1^T t^{-\frac{12+p}{8p}+\frac{12+3p}{4(6-p)}\delta_3} \mathrm{~d} t \nonumber\\
& \leqslant C\left(\|\nabla B_0\|_{L^2}\right)^\frac{12-7p}{12-2p},\nonumber
\end{align}
which together with $\eqref{xjduw}$-$\eqref{djduw}$ gives $\eqref{duwq}$ and finishes the proof of Lemma 3.7.
\end{proof}

With Lemma 3.2-3.7 at hand, we are in a position to prove Proposition 3.1.\\
\textbf{proof of proposition 3.1}
First, it from $\eqref{duwq}$ that
\begin{align}\label{uwqg1}
\int_0^T\|\nabla u\|_\infty \mathrm{~d} t \leqslant C_1\left(\|\nabla u_0\|_{L^2}+\|\nabla B_0\|_{L^2}\right)^{\beta} \leqslant \frac{\ln 2}{q},
\end{align}
provided that
$$
\|\nabla u_0\|_{L^2}+\|\nabla B_0\|_{L^2} \leqslant \varepsilon_1 \triangleq \left\{1,\left(\frac{\ln 2}{qC_1}\right)^{\frac{1}{\beta}}\right\},
$$
similar to $\eqref{mmdq}$, which also can deduce
$$
\sup _{0 \leqslant t<\infty}\|\nabla \mu(\rho)\|_{L^q} \leqslant 2\|\nabla \mu(\rho_0)\|_{L^q}.
$$

Next, choose $\delta=\delta_4=\min\{\frac{7-4p}{16p},1/8\}$ in $\eqref{dbsj}$, hence ${(\frac{\delta_4}{2}-\frac{6-p}{4p})\frac{16p}{17}}={\frac{4p-41}{34}}<-1$, it follows from $\eqref{xy}$ and $\eqref{dbnlg}$ that
\begin{align}\label{bpgp}
\int_0^T\|\nabla B\|_{L^2}^p\mathrm{~d} t&\leqslant\sup_{t\in[0,1]}\|\nabla B\|_{L^2}^\frac{p}{17}\int_0^1\|\nabla B\|_{L^2}^\frac{16p}{17}\mathrm{~d} t\nonumber\\
&\quad+\sup_{t\in[1,T]}\|\nabla B\|_{L^2}^\frac{p}{17}\Big(\sup_{t\in[1,T]}t^{(\frac{6-p}{4p}-\frac{\delta_4}{2})}\|\nabla B\|_{L^2}\Big)^\frac{16p}{17}\int_1^Tt^{(\frac{\delta_4}{2}-\frac{6-p}{4p})\frac{16p}{17}}\mathrm{~d} t\nonumber\\
&\leqslant C_2\|\nabla B_0\|_{L^2}^\frac{p}{17}\leqslant 1,
\end{align}
provided that
$$
\begin{aligned}
\|\nabla u_0\|_{L^2}+\|\nabla B_0\|_{L^2}\leqslant \varepsilon_2 \triangleq \left\{1,C_2^{-\frac{17}{p}}\right\}.
\end{aligned}
$$
In particular, it from $\eqref{tht}$ that
$$
\begin{aligned}
&\|\nabla u\|_{H^1}+\|P\|_{H^1} \\& \leqslant C\|\rho^{1 / 2} u_t\|_{L^2}+C\|\nabla u\|_{L^2}^3+C\|\nabla B\|_{L^2}^{3/2}\|\Delta B\|_{L^2}^{1/2}+C\|B\|_{L^2}^{1/2}\|\nabla B\|_{L^2}^{3/2},
\end{aligned}
$$
which, together with $\eqref{xy}$, $\eqref{dbnlg}$, $\eqref{dbsj}$, $\eqref{usj}$, $\eqref{dux}$, $\eqref{tdugj}$, $\eqref{ddb}$ and $\eqref{t3mg}$, gives
\begin{align}\label{du5g}
\sup_{t\in[0,T]} t^{\frac{12-3p}{2p}-\delta}\left(\|\nabla u\|_{H^1}^2+\|P\|_{H^1}^2\right)\leqslant C.
\end{align}

Then, choose $\delta=\delta_5=\min\{\frac{12-7p}{4p},1/8\}$ in $\eqref{du5g}$, hence $\frac{\delta_5}{2}-\frac{12-3p}{4p}\leqslant -\frac{12+p}{8p}<-1$, it follows from $\eqref{xy}$ and $\eqref{dux}$ that
\begin{equation}\label{4du}
\begin{split}
&\int_0^T\|\nabla u\|_{L^2}^4\mathrm{~d} t\\
&\leqslant \sup_{t\in[0,T]}\|\nabla u\|_{L^2}^3\left(\int_0^1\|\nabla u\|_{L^2}\mathrm{~d} t+\sup_{t\in[1,T]}t^{\frac{12-3p}{4p}-\frac{\delta_5}{2}}\|\nabla u\|_{L^2}\int_1^Tt^{\frac{\delta_5}{2}-\frac{12-3p}{4p}}\mathrm{~d} t\right)\\
&\leqslant C_3\left(\|\nabla u_0\|_{L^2}+\|\nabla B_0\|\right)\left(\|\nabla u_0\|_{L^2}^2+\|\nabla B_0\|_{L^2}^2\right)\\
&\leqslant \|\nabla u_0\|_{L^2}^2+\|\nabla B_0\|_{L^2}^2,
\end{split}
\end{equation}
provided that
$$
\begin{aligned}
\|\nabla u_0\|_{L^2}+\|\nabla B_0\|_{L^2}\leqslant \varepsilon_3 \triangleq \left\{1,C_3^{-1}\right\}.
\end{aligned}
$$

Choosing $\varepsilon_0 \triangleq \min \left\{\varepsilon_1, \varepsilon_2, \varepsilon_3 \right\}$, we directly obtain $\eqref{p2}$ from $\eqref{uwqg1}$, $\eqref{bpgp}$ and $\eqref{4du}$. The proof of Proposition $3.1$ is completed.
\hfill$\square$

The following Lemma 3.8-3.9 are concerned with further estimates on the higher-order derivatives of the strong solution $(\rho, u, B, P)$:
\begin{lemma}
Let $(\rho, u, B, P)$ be a smooth solution to $\eqref{MHD}$-$\eqref{ydxw}$ satisfying $\eqref{p1}$. Then there exists a generic positive constant $C$ depending only on $p, q, \bar{\rho}, \underline{\mu}, \bar{\mu}, \nu, \Omega, \|\rho_0\|_{L^{3/2}}$, $\|B_0\|_{L^{p}}$ and $M$ such that for any $0<\delta<1/8$, one obtain
\begin{align}\label{dut3}
\sup _{t \in[0, T]} t^{{\frac{6+3p}{2p}-\delta}}\|\nabla B_t\|_{L^2}^2+\int_0^T t^{{\frac{6+3p}{2p}-\delta}}\|B_{t t}\|_{L^2}^2+t^{{\frac{6+p}{2p}-\delta}}\|\nabla^2 B_t\|_{L^2}^2 \mathrm{~d} t \leqslant C(\delta),
\end{align}
provided $\|\nabla u_0\|_{L^2}+\|\nabla B_0\|_{L^2}\le \varepsilon_0$.
\end{lemma}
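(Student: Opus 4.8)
The plan is to differentiate the magnetic equation $\eqref{MHD}_3$ once in $t$ (as already recorded in $\eqref{btt}$), test the resulting equation with $B_{tt}$, and control the right-hand side using the bounds on $\nabla u$, $B$, $\nabla B$, $B_t$, $u_t$ and $\nabla u_t$ already available from Lemmas 3.2--3.7 and Proposition 3.1, together with the elliptic regularity $\|\nabla^2 B_t\|_{L^2}\le C(\|B_{tt}\|_{L^2}+\text{lower order})$ coming from $\eqref{MHD}_3$ differentiated in time and Lemma 2.2 (via $\eqref{dbkg}$, since $\curl B\times n=0$ on $\partial\Omega$). The two weights $t^{\frac{6+3p}{2p}-\delta}$ and $t^{\frac{6+p}{2p}-\delta}$ are chosen so that the time integrals of all the source terms converge; these exponents are exactly one and two powers of $t$ above the weight $t^{\frac{6+p}{2p}-\delta}$ used for $\|\nabla B_t\|_{L^2}^2$ in $\eqref{t2mg}$, which is the natural bootstrap.

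**Key steps, in order.** First, I would take $L^2$ inner product of $\eqref{btt}$ with $B_{tt}$ and integrate by parts in the dissipative term, using the boundary condition on $B_t$ (inherited from $\eqref{bbj}$ and Lemma 2.3) to write $-\int \Delta B_t\cdot B_{tt}=\frac{1}{2}\frac{d}{dt}\|\operatorname{curl} B_t\|_{L^2}^2$ plus a harmless boundary term of the type handled in Lemma 2.7; this yields
\begin{align*}
\|B_{tt}\|_{L^2}^2+\frac{\nu}{2}\frac{d}{dt}\|\operatorname{curl} B_t\|_{L^2}^2\le C\int\big(|B_t||\nabla u|+|B||\nabla u_t|+|u_t||\nabla B|+|u||\nabla B_t|\big)|B_{tt}|\,dx.
\end{align*}
Second, I would estimate each term on the right: $\|B_t\|_{L^3}\|\nabla u\|_{L^6}\|B_{tt}\|_{L^2}$, $\|B\|_{L^\infty}\|\nabla u_t\|_{L^2}\|B_{tt}\|_{L^2}$, $\|u_t\|_{L^6}\|\nabla B\|_{L^3}\|B_{tt}\|_{L^2}$, and $\|u\|_{L^6}\|\nabla B_t\|_{L^3}\|B_{tt}\|_{L^2}$, absorbing the $\|B_{tt}\|_{L^2}$ factors by Young's inequality into the left side, and using $\|B\|_{L^\infty}\le C(\|\nabla B\|_{L^2}^{1/2}\|\Delta B\|_{L^2}^{1/2}+\|\nabla B\|_{L^2})$ and $\|\nabla B_t\|_{L^3}\le C\|\nabla B_t\|_{L^2}^{1/2}\|\nabla^2 B_t\|_{L^2}^{1/2}$ plus the elliptic estimate for $\|\nabla^2 B_t\|_{L^2}$. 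This produces a differential inequality
\begin{align*}
\tfrac{d}{dt}\|\operatorname{curl} B_t\|_{L^2}^2+\tfrac12\|B_{tt}\|_{L^2}^2\le C\big(\|\nabla u\|_{H^1}^2+\|\nabla B\|_{H^1}^2+\|\rho^{1/2}u_t\|_{L^2}^2\big)\|\nabla B_t\|_{L^2}^2+C\big(\|\nabla u_t\|_{L^2}^2+\text{l.o.t.}\big).
\end{align*}
Third, I would multiply by $t^{\frac{6+3p}{2p}-\delta}$, integrate over $(0,T)$, and use Grönwall together with the already-established weighted bounds $\eqref{dbsj}$, $\eqref{usj}$, $\eqref{dux}$, $\eqref{tdugj}$, $\eqref{t3mg}$, $\eqref{t2mg}$, $\eqref{du5g}$ and Proposition 3.1 to check that the source terms are integrable in $t$ after carrying this extra weight; the power-counting is exactly analogous to the bookkeeping in Lemmas 3.5--3.6. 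Finally, the bound on $\int_0^T t^{\frac{6+p}{2p}-\delta}\|\nabla^2 B_t\|_{L^2}^2\,dt$ follows by plugging the just-obtained control of $\|B_{tt}\|_{L^2}$ (with its weight) into the elliptic estimate $\|\nabla^2 B_t\|_{L^2}\le C(\|B_{tt}\|_{L^2}+\|(B\cdot\nabla u-u\cdot\nabla B)_t\|_{L^2})$ and again invoking the previously derived weighted estimates.

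**Main obstacle.** The delicate point is the term $\int u\cdot\nabla B_t\cdot B_{tt}\,dx$ (and the symmetric $\int B\cdot\nabla u_t\cdot B_{tt}$): after $\|B_{tt}\|_{L^2}$ is absorbed one is left with $\|u\|_{L^6}^2\|\nabla B_t\|_{L^3}^2$, and bounding $\|\nabla B_t\|_{L^3}$ forces the introduction of $\|\nabla^2 B_t\|_{L^2}$, which is precisely the quantity being estimated; so one must split $\|\nabla^2 B_t\|_{L^2}\le\varepsilon$-times itself (via the elliptic estimate, absorbing $\varepsilon\|B_{tt}\|_{L^2}$ appropriately) plus lower-order terms, and then verify that the coefficient multiplying $\|\nabla B_t\|_{L^2}^2$ is integrable in time against the weight $t^{\frac{6+3p}{2p}-\delta}$ — this is where the constraint $p\in[1,12/7)$ and $\delta<1/8$ are used. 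Checking that every exponent of $t$ that appears stays strictly below $-1$ after accounting for the new weight is the tedious-but-essential part; conceptually it is the same scheme as Lemma 3.6, just one differentiation order higher.
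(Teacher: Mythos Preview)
Your plan is essentially the paper's own argument: test $\eqref{btt}$ with $B_{tt}$, derive a differential inequality for $\|\operatorname{curl}B_t\|_{L^2}^2$, multiply by the weight $t^{\frac{6+3p}{2p}-\delta}$, integrate, and then read off $\int t^{\frac{6+p}{2p}-\delta}\|\nabla^2 B_t\|_{L^2}^2\,dt$ from the elliptic estimate for $B_t$.

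The one place where you make life harder than necessary is what you flag as the ``main obstacle.'' For the term $\int u\cdot\nabla B_t\cdot B_{tt}\,dx$ you split $\|u\|_{L^6}\|\nabla B_t\|_{L^3}\|B_{tt}\|_{L^2}$, which forces $\|\nabla^2 B_t\|_{L^2}$ onto the right-hand side and then requires a bootstrap via the elliptic estimate and an $\varepsilon$-absorption. The paper avoids this circularity entirely by the simpler H\"older splitting $\|u\|_{L^\infty}\|\nabla B_t\|_{L^2}\|B_{tt}\|_{L^2}$ together with $\|u\|_{L^\infty}\le C\|\nabla u\|_{L^2}^{1/2}\|\nabla^2 u\|_{L^2}^{1/2}\le C\|\nabla u\|_{H^1}$ (and symmetrically $\|B\|_{L^\infty}\le C\|\nabla B\|_{H^1}$ for the $B\cdot\nabla u_t$ term). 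This yields directly
\[
\nu\frac{d}{dt}\|\operatorname{curl}B_t\|_{L^2}^2+\|B_{tt}\|_{L^2}^2\le C\big(\|\nabla u\|_{H^1}^2+\|\nabla B\|_{H^1}^2\big)\big(\|\nabla u_t\|_{L^2}^2+\|\nabla B_t\|_{L^2}^2\big),
\]
with no second-order term in $B_t$ on the right; the $\|\nabla u\|_{H^1}$ factor is then unpacked via $\eqref{tht}$. Your route would work, but the paper's is cleaner and makes the subsequent weight bookkeeping shorter.

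A minor remark: your appeal to Lemma~2.7 for a ``harmless boundary term'' is not needed here. Because $\operatorname{curl}B_t\times n=0$ and $\operatorname{div}B_t=0$, one has exactly $-\int\Delta B_t\cdot B_{tt}\,dx=\int\operatorname{curl}B_t\cdot\operatorname{curl}B_{tt}\,dx=\tfrac12\frac{d}{dt}\|\operatorname{curl}B_t\|_{L^2}^2$ with no residual boundary contribution.
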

\begin{proof}
First, multiplying $\eqref{btt}$ by $B_{t t}$, we obtain after using integration by parts, the Cauchy-Schwarz's inequality and $\eqref{tht}$ that
\begin{align}\label{dut2b}
&\nu\frac{\mathrm{d}}{\mathrm{d} t}\|\operatorname{curl} B_t\|_{L^2}^2+\|B_{tt}\|_{L^2}^2 \nonumber\\
&\leqslant C\left(\|\nabla u\|_{H^1}^2+\|\nabla B\|_{H^1}^2\right)\left(\|\nabla u_t\|_{L^2}^2+\|\nabla B_t\|_{L^2}^2\right)\\
&\leqslant C\left(\|\rho^{1/2}u_t\|_{L^2}^2+\|\nabla u\|_{L^2}^6+\|\nabla B\|_{H^1}^2\right)\left(\|\nabla u_t\|_{L^2}^2+\|\nabla B_t\|_{L^2}^2\right) .\nonumber
\end{align}
Hence, multiplying $\eqref{dut2b}$ by $t^{{\frac{6+3p}{2p}-\delta}}$ and integrating the resulting inequality over $(0, T)$, we deduce from $\eqref{dvgja}$, $\eqref{xy}$, $\eqref{dbnlg}$, $\eqref{usj}$, $\eqref{dux}$, $\eqref{tdugj}$, $\eqref{ddb}$ and $\eqref{t2mg}$ that
\begin{align}\label{bt3g}
\sup _{t \in[0, T]} t^{{\frac{6+3p}{2p}-\delta}}\|\nabla B_t\|_{L^2}^2+\int_0^T t^{{\frac{6+3p}{2p}-\delta}}\|B_{t t}\|_{L^2}^2 \mathrm{~d} t \leqslant C(\delta) .
\end{align}

Next, operating $\partial_t$ to $\eqref{MHD}_3$ yields that
$$
B_{tt}+(u_t\cdot\nabla B+u\cdot\nabla B_t)-(B_t\cdot\nabla u+B\cdot\nabla u_t)-\nu\Delta B_t=0.
$$
Hence, one can deduce from Lemma 2.5 and Sobolev inequality that
$$
\begin{aligned}
&\|\nabla^2 B_t\|_{L^2}^2 \\
&\leqslant C\|\Delta B_t\|_{L^2}^2+C\|\nabla B_t\|_{L^2}^2\\
&\leqslant C\|B_{t t}\|_{L^2}^2+C\|u_t\|_{L^6}^2\|\nabla B\|_{L^3}^2+C\|u\|_{L^6}^2\|\nabla B_t\|_{L^3}^2 +C\|\nabla B_t\|_{L^2}^2\\
&\quad +C\|B_t\|_{L^{\infty}}^2\|\nabla u\|_{L^2}^2+C\|B\|_{L^{\infty}}^2\|\nabla u_t\|_{L^2}^2 \\
&\leqslant \frac{1}{2}\|\nabla^2 B_t\|_{L^2}^2+C\left(\|B_{t t}\|_{L^2}^2+\|\nabla u_t\|_{L^2}^2\|\nabla B\|_{H^1}^2+\|\nabla u\|_{L^2}^4\|\nabla B_t\|_{L^2}^2+\|\nabla B_t\|_{L^2}^2\right),
\end{aligned}
$$
which combined with $\eqref{dbnlg}$, $\eqref{dbsj}$, $\eqref{dux}$, $\eqref{tdugj}$, $\eqref{ddb}$-$\eqref{t2mg}$ and $\eqref{bt3g}$ gives $\eqref{dut3}$ and complets the proof of Lemma 3.8.
\end{proof}
\begin{lemma}
Let $(\rho, u, B, P)$ be a smooth solution to $\eqref{MHD}$-$\eqref{ydxw}$ satisfying $\eqref{p1}$. Then there exists a generic positive constant $C$ depending only on $p, q, \bar{\rho}, \underline{\mu}, \bar{\mu}, \nu, \Omega, \|\rho_0\|_{L^{3/2}}$, $\|B_0\|_{L^{p}}$ and $M$ such that for any $0<\delta<1/8$,  $s\triangleq\min\{6,q\}$, one obtain
\begin{equation}
\begin{split}\label{dut2}
&\sup _{t \in[0, T]} \left(t^{\frac{12-3p}{2p}-\delta}\|\nabla u\|_{W^{1,s}}^2+t^{\frac{12-3p}{2p}-\delta}\|P\|_{W^{1,s}}^2+t^{{\frac{6}{p}-\delta}}\|\nabla u_t\|_{L^2}^2\right)\\
&+\int_0^T t^{{\frac{6-p}{p}-\delta}}\left(\|\nabla u_t\|_{L^2 \cap L^s}^2+\|P_t\|_{L^2 \cap L^s}^2+\|(\rho u_t)_t\|_{L^2}^2\right) \mathrm{~d} t \leqslant C(\delta),
\end{split}
\end{equation}
provided $\|\nabla u_0\|_{L^2}+\|\nabla B_0\|_{L^2}\le \varepsilon_0$.
\end{lemma}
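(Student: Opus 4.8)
The plan is to combine the $W^{1,s}$-regularity theory for the Stokes system (Lemma~\ref{le23}) with the time-weighted estimates for $\|\rho^{1/2}u_t\|_{L^2}$, $\|\nabla u_t\|_{L^2}$ and the magnetic quantities already obtained in Lemmas 3.4--3.8. First I would differentiate $\eqref{MHD}_2$ in time and test the resulting equation $\eqref{tmiu}$ by $u_t$ once more, but this time multiply by a higher time-weight $t^{\frac{6}{p}-\delta}$ instead of $t^{\frac{6-p}{p}-\delta}$; absorbing the extra factor of $t$ produces an additional term $C t^{\frac{6-2p}{p}-\delta}(\|\rho^{1/2}u_t\|_{L^2}^2+\cdots)$ on the right-hand side, which is integrable in time thanks to $\eqref{tdugj}$ and $\eqref{t3mg}$. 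Re-running the $K_1$--$K_5$ estimates from the proof of Lemma 3.6 with the heavier weight and using $\eqref{dbsj}$, $\eqref{usj}$, $\eqref{dux}$, $\eqref{tdugj}$, $\eqref{t3mg}$ and $\eqref{dut3}$ for the magnetic contributions, this gives the bound $\sup_t t^{\frac6p-\delta}\|\nabla u_t\|_{L^2}^2+\int_0^T t^{\frac{6-p}{p}-\delta}\|\nabla u_t\|_{L^2}^2\,\mathrm dt\le C(\delta)$; here one has to be slightly careful to pick an auxiliary small $\delta_i$ in the lower-order estimates so that all the resulting exponents are strictly below $-1$, exactly as in the earlier lemmas.

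Next I would feed this into the Stokes estimate. Writing $\eqref{MHD}_2$ as the stationary Stokes problem $\eqref{sto}$ with $F=\rho u_t+\rho u\cdot\nabla u-B\cdot\nabla B$, part (1) of Lemma~\ref{le23} with $r=s=\min\{6,q\}$ gives $\|\nabla^2u\|_{L^s}+\|\nabla P\|_{L^s}\le C\|F\|_{L^s}+C(\|\nabla\mu(\rho)\|_{L^q}^{\cdots}+1)\|F\|_{L^{6/5}}$, and I would estimate $\|F\|_{L^s}$ via Gagliardo--Nirenberg exactly as in $\eqref{du2p}$, i.e. $\|F\|_{L^s}\lesssim \|\rho^{1/2}u_t\|_{L^2}^{\theta}\|\nabla u_t\|_{L^2}^{1-\theta}+\|\nabla u\|_{L^2}^3+\|\rho^{1/2}u_t\|_{L^2}+\|\Delta B\|_{L^2}^2+\|\nabla B\|_{L^2}^2+\|B\|_{L^2}^{1/2}\|\nabla B\|_{L^2}^{3/2}$. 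Multiplying by the weight $t^{\frac{12-3p}{2p}-\delta}$ and invoking the already-established bounds $\eqref{dbsj}$, $\eqref{usj}$, $\eqref{dux}$, $\eqref{tdugj}$, $\eqref{t3mg}$ together with the new bound on $t^{\frac6p-\delta}\|\nabla u_t\|_{L^2}^2$, the products work out because $\tfrac{12-3p}{2p}-\delta=\big(\tfrac6p-\delta\big)\cdot(1-\theta)+(\text{contributions from }\rho^{1/2}u_t,\ B)$ when $\theta=\tfrac{6-s}{2s}$; this yields $\sup_t t^{\frac{12-3p}{2p}-\delta}(\|\nabla u\|_{W^{1,s}}^2+\|P\|_{W^{1,s}}^2)\le C(\delta)$, using $\eqref{dmidj}$ to control $\|\nabla\mu(\rho)\|_{L^q}$.

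For the $P_t$ and $(\rho u_t)_t$ bounds I would apply part (2) of Lemma~\ref{le23} to the time-differentiated Stokes system $\eqref{tmiu}$: here the right-hand side is in divergence form, $-\rho_t u_t-(\rho u)_t\cdot\nabla u+\operatorname{div}(2\mu(\rho)_t d)+(B\cdot\nabla B)_t=\operatorname{div} g+(\text{lower order})$, so that $\|\nabla u_t\|_{L^2\cap L^s}+\|P_t\|_{L^2\cap L^s}\le C\|g\|_{L^2\cap L^s}+C\|\nabla\mu(\rho)\|_{L^q}^{\cdots}\|g\|_{L^2}$; estimating $g$ in terms of $\rho^{1/2}u_t$, $\nabla u_t$, $\nabla^2 u$, $B_t$, $\nabla B_t$ via Gagliardo--Nirenberg, and then using $(\rho u_t)_t=\rho u_{tt}+\rho_t u_t=-\rho u\cdot\nabla u_t+\operatorname{div}(2\mu(\rho)d_t)-\nabla P_t+(\text{RHS of }\eqref{tmiu})$ to bound $\|(\rho u_t)_t\|_{L^2}$ by $\|\nabla u_t\|_{L^2}+\|\nabla^2 u_t\|_{L^2}+\|\nabla P_t\|_{L^2}+\cdots$, multiplying by $t^{\frac{6-p}{p}-\delta}$ and integrating over $(0,T)$. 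The main obstacle I anticipate is bookkeeping the exponents: one must verify that every time-weighted product of the component norms is $L^1(0,T)$ near $t=0$ (no non-integrable singularity) and decays fast enough near $t=\infty$, which forces the same trick of choosing several auxiliary $\delta_i<1/8$ with carefully tuned values so that the borderline exponents come out strictly less than $-1$, exactly as was done in the proofs of Lemmas 3.5--3.8; the algebraic identity $\tfrac{12-3p}{2p}=\tfrac6p-\tfrac{3}{2}$ and the relation $s=\min\{6,q\}>3$ are what make all these inequalities consistent.
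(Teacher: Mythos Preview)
There is a genuine gap in the first step. Testing the time-differentiated momentum equation \eqref{tmiu} by $u_t$ (as in Lemma 3.6) produces the identity
\[
\tfrac12\tfrac{d}{dt}\|\rho^{1/2}u_t\|_{L^2}^2+2\int\mu(\rho)|d_t|^2\,dx=\sum K_i,
\]
so after multiplying by any time weight you obtain a \emph{sup} bound on $\|\rho^{1/2}u_t\|_{L^2}^2$ and an \emph{integral} bound on $\|\nabla u_t\|_{L^2}^2$, never the other way around. Your claimed conclusion $\sup_t t^{\frac6p-\delta}\|\nabla u_t\|_{L^2}^2\le C(\delta)$ therefore cannot come from this test. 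The paper instead multiplies \eqref{tmiu} by $u_{tt}$, which yields
\[
\tfrac{d}{dt}\int\mu(\rho)|d_t|^2\,dx+\|\rho^{1/2}u_{tt}\|_{L^2}^2=\ldots,
\]
so that the \emph{energy} is now $\|\nabla u_t\|_{L^2}^2$ and the \emph{dissipation} is $\|\rho^{1/2}u_{tt}\|_{L^2}^2$. This is what gives the pointwise bound $t^{\frac6p-\delta}\|\nabla u_t\|_{L^2}^2\le C(\delta)$ needed in the statement and, via \eqref{sugj}, the sup bound on $t^{\frac{12-3p}{2p}-\delta}\|\nabla u\|_{W^{1,s}}^2$. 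The $u_{tt}$ test requires substantially more work than re-running the $K_i$ from Lemma 3.6: several commutator-type terms have to be rewritten as total time derivatives of auxiliary integrals (the function $\varphi(t)$ in the paper) before they can be absorbed.

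The same missing ingredient propagates to the last two estimates. For $\|\nabla u_t\|_{L^2\cap L^s}+\|P_t\|_{L^2\cap L^s}$, the right-hand side of the time-differentiated Stokes problem is \emph{not} purely in divergence form; it splits as $G+\operatorname{div}g$ with $G$ containing $\rho u_{tt}$, and the resulting Stokes bound \eqref{dutqb} carries $\|\rho^{1/2}u_{tt}\|_{L^2}$ on the right, whose time integral you only get from the $u_{tt}$ test. Likewise your route to $\|(\rho u_t)_t\|_{L^2}$ via $\rho u_{tt}=\operatorname{div}(2\mu(\rho)d_t)-\nabla P_t+\cdots$ would demand an $L^2$ bound on $\nabla^2 u_t$ and $\nabla P_t$ \emph{separately}, which is not available; the paper writes $(\rho u_t)_t=\rho_t u_t+\rho u_{tt}$ and controls the two pieces by $\|\rho_t\|_{L^{3/2}\cap L^2}\|\nabla u_t\|_{L^2\cap L^s}$ and $\bar\rho^{1/2}\|\rho^{1/2}u_{tt}\|_{L^2}$ directly.
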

\begin{proof}
First, in similar way to $\eqref{nqgj}$ and $\eqref{mmdq}$, we have
\begin{align}\label{mddg}
\sup _{t \in[0, T]}\|\nabla \rho\|_{L^2} \leqslant 2\|\nabla \rho_0\|_{L^2},
\end{align}
which together with $\eqref{MHD}_1$ and $\eqref{mddg}$ gives
\begin{align}\label{mdtg}
\|\rho_t\|_{L^{3 / 2} \cap L^2} &=\|u\cdot\nabla\rho\|_{L^{3 / 2} \cap L^2}\nonumber\\
&\leqslant C\|\nabla \rho\|_{L^2}\|\nabla u\|_{L^2}^{1 / 2}\|\nabla u\|_{H^1}^{1 / 2} \leqslant C\|\nabla u\|_{L^2}^{1 / 2}\|\nabla u\|_{H^1}^{1/2} .
\end{align}

Next, it follows from $\eqref{tmiu}$ that $u_t$ satisfies
$$
\left\{\begin{array}{l}
-\operatorname{div}(2 \mu(\rho) d_t)+\nabla P_t=G+\operatorname{div} g, \\
\operatorname{div} u_t=0,
\end{array}\right.
$$
with
$$
G \triangleq-\rho u_{t t}-\rho u \cdot \nabla u_t-\rho_t u_t-(\rho u)_t \cdot \nabla u, \quad g \triangleq-2 u \cdot \nabla \mu(\rho) d+(B\otimes B)_t .
$$
Thus, due to Lemma 2.6, one can deduce that
\begin{align}\label{dutsg}
\|\nabla u_t\|_{L^2 \cap L^s}+\|P_t\|_{L^2 \cap L^s} \leqslant C\|g\|_{L^2 \cap L^s}+C\|G\|_{L^{3 s /(3+s)} \cap L^{6 / 5}}, \quad s \triangleq \min \{6, q\} .
\end{align}
Using $\eqref{p1}$, $\eqref{midu}$, $\eqref{dux}$ and $\eqref{mdtg}$, we get by direct calculations that
\begin{equation}\label{gsg}
\begin{split}
& \|G\|_{L^{3 s /(3+s)} \cap L^{6 / 5}} \\
& \leqslant C\|\rho\|_{L^{3 s /(6-s)} \cap L^{3 / 2}}^{1 / 2}\|\rho^{1 / 2} u_{t t}\|_{L^2}+C\|\rho\|_{L^{6 s /(6-s)} \cap L^3}\|u\|_{L^{\infty}}\|\nabla u_t\|_{L^2} \\
&\quad +C\|\rho_t\|_{L^{3 / 2} \cap L^2}\left(\|u_t\|_{L^{6 s /(6-s)} \cap L^6}+\|\nabla u\|_{H^1}^{3/2}+\|\nabla u\|_{H^1}^{1/2}\|\nabla u\|_{W^{1, s}}\right) \\
&\quad +C\|\rho\|_{L^2 \cap L^s}\|u_t\|_{L^6}\|\nabla u\|_{L^6} \\
& \leqslant C\|\rho^{1 / 2} u_{t t}\|_{L^2}+\delta\|\nabla u_t\|_{L^s}+C_\delta\|\nabla u_t\|_{L^2}\|\nabla u\|_{H^1}\left(1+\|\nabla u\|_{H^1}\right)+C\|\nabla u\|_{H^1}^2\\
&\quad+C\|\nabla u\|_{H^1}\|\nabla B\|_{H^1}^{3/2} \text {, }
\end{split}
\end{equation}
where we used the following fact
\begin{gather}
\|u_t\|_{L^\frac{6s}{6-s}}\leqslant C\|u_t\|_{L^6}^\frac{s}{3s-6}\|\nabla u_t\|_{L^s}^\frac{2s-6}{3s-6},\\
\|\nabla u\|_{W^{1,s}}+\|P\|_{W^{1,s}} \leqslant C\left(\|\nabla u_t\|_{L^2}+\|\nabla u\|_{L^2}^3+\|\nabla B\|_{H^1}^{3/2}\right),\label{sugj}
\end{gather}
similar to $\eqref{sugj}$, due to the Sobolev inequality and $\eqref{duwqg}$, one has
\begin{align}\label{sdwq}
\|\nabla u\|_{L^{\infty}} \leqslant C\|\nabla u\|_{H^1 \cap W^{1, s}} \leqslant C\left(\|\nabla u_t\|_{L^2}+\|\nabla u\|_{L^2}^3+\|\nabla B\|_{H^1}^{3/2}\right),
\end{align}
thus
\begin{align}\label{gsgj}
\|g\|_{L^s \cap L^2} & \leqslant C\|\nabla \mu(\rho)\|_{L^q}\|u\|_{L^6 \cap L^{\infty}}\|\nabla u\|_{L^2 \cap L^{\infty}}+C\|B\|_{L^{\infty}}\|B_t\|_{H^1} \nonumber\\
& \leqslant C\|\nabla u_t\|_{L^2}\|\nabla u\|_{H^1}+C\|\nabla u\|_{H^1}\|\nabla B\|_{H^1}^{3/2}+C\|\nabla u\|_{H^1}^2\\
&\quad+C\|\nabla B\|_{H^1}\|B_t\|_{H^1} .\nonumber
\end{align}
Putting $\eqref{gsg}$ and $\eqref{gsgj}$ into $\eqref{dutsg}$, we obtain after choosing $\delta>0$ suitable small that
\begin{align}\label{dutqb}
&\|\nabla u_t\|_{L^2 \cap L^s}+\|P_t\|_{L^2 \cap L^s} \nonumber\\
&\leqslant C\|\rho^{1 / 2} u_{t t}\|_{L^2}+C\|\nabla u_t\|_{L^2}\|\nabla u\|_{H^1}\left(1+\|\nabla u\|_{H^1}\right)+C\|\nabla u\|_{H^1}^2 \\
&\quad+C\|\nabla u\|_{H^1}\|\nabla B\|_{H^1}^{3/2}+C\|\nabla B\|_{H^1}\|B_t\|_{H^1}.\nonumber
\end{align}

Now, multiplying $\eqref{tmiu}$ by $u_{t t}$ and integrating the resulting equation by parts lead to
\begin{align}\label{zht}
\frac{\mathrm{d}}{\mathrm{d} t} & \int \mu(\rho)|d_t|^2 \mathrm{~d} x+\int \rho|u_{t t}|^2 \mathrm{~d} x \nonumber\\
= & \int \operatorname{div}\left(\mu(\rho)\cdot u\right)|d_t|^2 \mathrm{~d} x-\int \rho\left(u \cdot \nabla u_t+u_t \cdot \nabla u\right) \cdot u_{t t} \mathrm{~d} x-\int \rho_t u_t \cdot u_{t t} \mathrm{~d} x \nonumber\\
& -\int \rho_t u \cdot \nabla u \cdot u_{t t} \mathrm{~d} x-2 \int \operatorname{div}\left(u \cdot \nabla \mu(\rho) d\right) \cdot u_{t t} \mathrm{~d} x+\int\left(B_t \cdot \nabla B+B \cdot \nabla B_t\right) \cdot u_{t t} \mathrm{~d} x \nonumber\\
\triangleq& \int \operatorname{div}\left(\mu(\rho)\cdot u\right)|d_t|^2 \mathrm{~d} x+\sum_{i=1}^5 K_i .
\end{align}

We will use $\eqref{xy}$, $\eqref{dbnlg}$, $\eqref{dux}$, $\eqref{dutqb}$ and the Sobolev inequality to estimate each term on the right-hand side of $\eqref{zht}$ as follows:

First, H\"older's inequality gives
\begin{align}
K_1 \leqslant \tau\|\rho^{1 / 2} u_{t t}\|_{L^2}^2+C_\tau\|\nabla u_t\|_{L^2}^2\|\nabla u\|_{H^1}^2.
\end{align}

Next, the inequalities $\eqref{mddg}$, $\eqref{mdtg}$ and $\eqref{dutqb}$ imply that
\begin{align}
K_2= & -\frac{1}{2} \frac{\mathrm{d}}{\mathrm{d} t} \int \rho_t|u_t|^2 \mathrm{~d} x+\int \rho_{t t}|u_t|^2 \mathrm{~d} x \nonumber\\
= & -\frac{\mathrm{d}}{\mathrm{d} t} \int \rho u \cdot \nabla u_t \cdot u_t \mathrm{~d} x+\int(\rho u)_t \cdot \nabla(|u_t|^2) \mathrm{d} x \nonumber\\
\leqslant & -\frac{\mathrm{d}}{\mathrm{d} t} \int \rho u \cdot \nabla u_t \cdot u_t \mathrm{~d} x+C\|\rho_t\|_{L^2}\|u\|_{L^{\infty}}\|\nabla u_t\|_{L^3}\|u_t\|_{L^6} \nonumber\\
& +C\|\rho\|_{L^6}\|u_t\|_{L^6}\|\nabla u_t\|_{L^2}\|u_t\|_{L^6} \nonumber\\
\leqslant & -\frac{\mathrm{d}}{\mathrm{d} t} \int \rho u \cdot \nabla u_t \cdot u_t \mathrm{~d} x+\tau\|\rho^{1 / 2} u_{t t}\|_{L^2}^2+C_\tau\|\nabla u_t\|_{L^2}^2\left(\|\nabla u\|_{H^1}^2+\|\nabla u\|_{H^1}^4\right)+C_\tau\|\nabla u\|_{H^1}^6 \nonumber\\
& +C_\tau\|\nabla u\|_{H^1}\|\nabla u_t\|_{L^2}^3+C_\tau\|\nabla u\|_{H^1}^2\|\nabla u_t\|_{H^1}\left(\|\nabla u\|_{H^1}\|\nabla B\|_{H^1}^{3/2}+\|\nabla B\|_{H^1}\|B_t\|_{H^1}\right).\nonumber
\end{align}

Then, it follows from $\eqref{MHD}_1$ and $\eqref{mdtg}$ that
\begin{align}
K_3= & -\frac{\mathrm{d}}{\mathrm{d} t} \int \rho_t u \cdot \nabla u \cdot u_t \mathrm{~d} x-\int \operatorname{div}(\rho u)_t u \cdot \nabla u \cdot u_t \mathrm{~d} x+\int \rho_t(u \cdot \nabla u)_t \cdot u_t \mathrm{~d} x \nonumber\\
= & -\frac{\mathrm{d}}{\mathrm{d} t} \int \rho_t u \cdot \nabla u \cdot u_t \mathrm{~d} x+\int \rho u_t^i\left(\partial_i(u^j \partial_j u^k) u_t^k+u^j \partial_j u^k \partial_i u_t^k\right) \mathrm{d} x \nonumber\\
& +\int \rho_t u^i\left(\partial_i(u^j \partial_j u^k) u_t^k+u^j \partial_j u^k \partial_i u_t^k\right) \mathrm{d} x+\int \rho_t\left(u_t^i \partial_i u^j+u^i \partial_i u_t^j\right) u_t^j \mathrm{~d} x \nonumber\\
\leqslant & -\frac{\mathrm{d}}{\mathrm{d} t} \int \rho_t u \cdot \nabla u \cdot u_t \mathrm{~d} x+C\|u_t\|_{L^6}\left(\|\nabla u\|_{L^3}^2+\|u\|_{L^6}\|\nabla^2 u\|_{L^2}\right)\|u_t\|_{L^6} \nonumber\\
& +C\|u_t\|_{L^6}\|u\|_{L^6}\|\nabla u\|_{L^6}\|\nabla u_t\|_{L^2}+C\|\rho_t\|_{L^2}\|u\|_{L^{\infty}}\left(\|\nabla u\|_{L^6}^2+\|u\|_{L^{\infty}}\|\nabla^2 u\|_{L^3}\right)\|u_t\|_{L^6} \nonumber\\
& +C\|\rho_t\|_{L^2}\|u\|_{L^{\infty}}^2\|\nabla u\|_{L^{\infty}}\|\nabla u_t\|_{L^2}+C\|\rho_t\|_{L^2}\left(\|u_t\|_{L^6}\|\nabla u\|_{L^6}+\|u\|_{L^{\infty}}\|\nabla u_t\|_{L^3}\right)\|u_t\|_{L^6} \nonumber\\
\leqslant & -\frac{\mathrm{d}}{\mathrm{d} t} \int \rho_t u \cdot \nabla u \cdot u_t \mathrm{~d} x+\tau\|\rho^{1 / 2} u_{t t}\|_{L^2}^2+C_\tau\|\nabla u_t\|_{L^2}^2\left(\|\nabla u\|_{H^1}^2+\|\nabla u_t\|_{L^2}+\|\nabla u\|_{H^1}^4\right) \nonumber\\
& +C_\tau\|\nabla u\|_{H^1}^6+C_\tau\|\nabla u\|_{H^1}^2\|\nabla u_t\|_{H^1}\left(\|\nabla u\|_{H^1}\|\nabla B\|_{H^1}^{3/2}+\|\nabla B\|_{H^1}\|B_t\|_{H^1}\right),\nonumber
\end{align}
where in the last inequality, one has used $\eqref{mdtg}$ and $\eqref{dutqb}$.

Next, direct calculations lead to
$$
\begin{aligned}
K_4= & -2 \frac{\mathrm{d}}{\mathrm{d} t} \int \partial_i\left(u^j \partial_j \mu(\rho) d_i^k\right) u_t^k \mathrm{~d} x+2 \int \partial_i\left(u^j \partial_j \mu(\rho) d_i^k\right)_t u_t^k \mathrm{~d} x \\
= & 2 \frac{\mathrm{d}}{\mathrm{d} t} \int u^j \partial_j \mu(\rho) d_i^k \partial_i u_t^k \mathrm{~d} x+2 \int\left(\mu(\rho) u^j \partial_j d_i^k\right)_t \partial_i u_t^k \mathrm{~d} x-2 \int\left(u^j \partial_j(\mu(\rho) d_i^k)\right)_t \partial_i u_t^k \mathrm{~d} x \\
= & 2 \frac{\mathrm{d}}{\mathrm{d} t} \int u^j \partial_j \mu(\rho) d_i^k \partial_i u_t^k \mathrm{~d} x+2 \int\left(\mu(\rho) u^j \partial_j d_i^k\right)_t \partial_i u_t^k \mathrm{~d} x-2 \int\left(\partial_i u^j \mu(\rho) d_i^k\right)_t \partial_j u_t^k \mathrm{~d} x \\
& -2 \int u_t^j \partial_i\left(\mu(\rho) d_i^k\right) \partial_j u_t^k \mathrm{~d} x-2 \int u^j\left(\partial_i(\mu(\rho) d_i^k)\right)_t \partial_j u_t^k \mathrm{~d} x \\
\triangleq & 2 \frac{\mathrm{d}}{\mathrm{d} t} \int u^j \partial_j \mu(\rho) d_i^k \partial_i u_t^k \mathrm{~d} x+\sum_{l=1}^4 K_{4, l} .
\end{aligned}
$$
We estimate each $K_{4, l}(l=1, \ldots, 4)$ as follows:

First, integration by parts gives
$$
\begin{aligned}
K_{4,1}&=2 \int(\mu(\rho) u^j)_t \partial_j d_i^k \partial_i u_t^k \mathrm{~d} x+2 \int \mu(\rho) u^j \partial_j(d_i^k)_t \partial_i u_t^k \mathrm{~d} x \\
& =2 \int\left(\mu(\rho) u_t^j-u \cdot \nabla \mu(\rho) u^j\right) \partial_j d_i^k \partial_i u_t^k \mathrm{~d} x-\int \operatorname{div}(\mu(\rho) u)|d_t|^2 \mathrm{~d} x \\
& \leqslant C\|u_t\|_{L^6}\|\nabla^2 u\|_{L^3}\|\nabla u_t\|_{L^2}+C\|u\|_{L^{6 q /(q-3)}}^2\|\nabla \mu(\rho)\|_{L^q}\|\nabla^2 u\|_{L^3}\|\nabla u_t\|_{L^3} \\
&\quad -\int \operatorname{div}(\mu(\rho) u)|d_t|^2 \mathrm{~d} x \\
& \leqslant-\int \operatorname{div}(\mu(\rho) u)|d_t|^2 \mathrm{~d} x+\frac{\tau}{2}\|\rho^{1 / 2} u_{t t}\|_{L^2}^2+C_\tau\|\nabla u_t\|_{L^2}^2\left(\|\nabla u\|_{H^1}^2+\|\nabla u\|_{H^1}^4+\|\nabla u_t\|_{L^2}\right) \\
&\quad +C_\tau\left(\|\nabla u_t\|_{L^2}^2\|\nabla B\|_{H^1}^{3/2}+\|\nabla u\|_{H^1}^6+\|\nabla u\|_{H^1}^4\|\nabla B\|_{H^1}^{3/2}+\|\nabla u\|_{H^1}^3\|\nabla B\|_{H^1}^{3}\right) \text {, } \\
&\quad+C_\tau\|\nabla u\|_{H^1}^2\left(\|\nabla u_t\|_{L^2}+\|\nabla u\|_{H^1}^{3/2}+\|\nabla B\|_{H^1}^{3/2}\right)\|\nabla B\|_{H^1}\|B_t\|_{H^1},
\end{aligned}
$$
where in the last inequality we have used $\eqref{dux}$, $\eqref{sugj}$ and $\eqref{dutqb}$.

Then, it follows from $\eqref{p1}$ and $\eqref{sdwq}$ that
$$
\begin{aligned}
K_{4,2} & =-2 \int \partial_i u_t^j \mu(\rho) d_i^k \partial_j u_t^k \mathrm{~d} x-2 \int \partial_i u^j(\mu(\rho))_t d_i^k \partial_j u_t^k \mathrm{~d} x-2 \int \partial_i u^j \mu(\rho)(d_i^k)_t \partial_j u_t^k \mathrm{~d} x \\
& \leqslant C\|\nabla u\|_{L^{\infty}}\|\nabla u_t\|_{L^2}^2+C\|u\|_{L^{\infty}}\|\nabla u\|_{L^{3 q /(q-3)}}\|\nabla \mu(\rho)\|_{L^q}\|\nabla u\|_{L^6}\|\nabla u_t\|_{L^2} \\
& \leqslant C\|\nabla u_t\|_{L^2}^2\left(\|\nabla u_t\|_{L^2}+\|\nabla u\|_{H^1}^2+\|\nabla B\|_{H^1}^{3/2}\right)+C\|\nabla u_t\|_{L^2}\|\nabla u\|_{H^1}^3.
\end{aligned}
$$
Similarly, combining H\"older's inequality and $\eqref{sugj}$ leads to
$$
\begin{aligned}
K_{4,3} & =-2 \int u_t^j \partial_i \mu(\rho) d_i^k \partial_j u_t^k \mathrm{~d} x-2 \int u_t^j \mu(\rho) \partial_i d_i^k \partial_j u_t^k \mathrm{~d} x \\
& \leqslant C\|u_t\|_{L^6}\|\nabla \mu(\rho)\|_{L^q}\|\nabla u\|_{L^{3 q /(q-3)}}\|\nabla u_t\|_{L^2}+C\|u_t\|_{L^6}\|\nabla^2 u\|_{L^3}\|\nabla u_t\|_{L^2} \\
& \leqslant C\|\nabla u_t\|_{L^2}^2\left(\|\nabla u_t\|_{L^2}+\|\nabla u\|_{H^1}+\|\nabla u\|_{H^1}^2+\|\nabla B\|_{H^1}^\frac{3}{2}\right) .
\end{aligned}
$$

Finally, using $\eqref{MHD}_2$ and $\eqref{MHD}_3$, we obtain after integrating by parts that
$$
\begin{aligned}
K_{4,4}= & -\int u^j\left(\rho u_t^k-\rho u \cdot \nabla u^k\right)_t \partial_j u_t^k \mathrm{~d} x-\int u^j \partial_k P_t \partial_j u_t^k \mathrm{~d} x+\int u^j(B \cdot \nabla B)_t \partial_j u_t^k \mathrm{~d} x \\
\leqslant & C\|u\|_{L^{\infty}}\|\rho_t\|_{L^2}\|u_t\|_{L^{\infty}}\|\nabla u_t\|_{L^2}+C\|u\|_{L^{\infty}} \| \rho^{1 / 2} u_{t t}\|_{L^2}\| \nabla u_t \|_{L^2} \\
& +C\|u\|_{L^{\infty}}^2\|\rho_t\|_{L^2}\|\nabla u\|_{L^{\infty}}\|\nabla u_t\|_{L^2}+C\|u\|_{L^{\infty}} C\|u_t\|_{L^6}\|\nabla u\|_{L^3}\|\nabla u_t\|_{L^2} \\
& +C\|u\|_{L^{\infty}}^2\|\nabla u_t\|_{L^2}^2+C\|\nabla u\|_{L^6}\|P_t\|_{L^3}\|\nabla u_t\|_{L^2} \\
& +C\|u\|_{L^{\infty}}\left(\|B_t\|_{L^6}\|\nabla B\|_{L^3}+\|B\|_{L^{\infty}}\|\nabla B_t\|_{L^2}\right)\|\nabla u_t\|_{L^2} \\
\leqslant & \frac{\tau}{2}\|\rho^{1 / 2} u_{t t}\|_{L^2}^2+C_\tau\|\nabla u_t\|_{L^2}^2\left(\|\nabla u\|_{H^1}+\|\nabla u\|_{H^1}^4+\|\nabla u_t\|_{L^2}\right) \\
& +C_\tau\left(\|\nabla u_t\|_{L^2}\|\nabla u\|_{H^1}^3+\|\nabla u\|_{H^1}\|\nabla u_t\|_{L^2}\|\nabla B\|_{H^1}\|B_t\|_{H^1}+\|\nabla u\|_{H^1}^6\right)\\
&+C_\tau\left(\|\nabla u\|_{H^1}^2\|\nabla B\|_{H^1}^{3/2}\|\nabla u_t\|_{L^2}+\|\nabla u\|_{H^1}\|\nabla u_t\|_{L^2}\|\nabla B\|_{H^1}\|B_t\|_{H^1}\right),
\end{aligned}
$$
where in the last inequality one has used $\eqref{mdtg}$ and $\eqref{dutqb}$.

Substituting $K_{4,1}-K_{4,4}$ into $K_4$, we get after choosing $\tau$ suitably small that
\begin{align}
K_4 \leqslant & 2 \frac{\mathrm{d}}{\mathrm{d} t} \int u^j \partial_j \mu(\rho) d_i^k \partial_i u_t^k \mathrm{~d} x-\int \operatorname{div}(\mu(\rho) u)|d_t|^2 \mathrm{~d} x+\tau\|\rho^{1 / 2} u_{t t}\|_{L^2}^2 \nonumber\\
&+C_\tau\|\nabla u_t\|_{L^2}^2\left(\|\nabla u\|_{H^1}+\|\nabla u\|_{H^1}^4+\|\nabla u_t\|_{L^2}+\|\nabla B\|_{H^1}^{3/2}\right)+C_\tau\|\nabla u\|_{H^1}^6 \nonumber\\
&+C_\tau\|\nabla u\|_{H^1}^4\|\nabla B\|_{H^1}^{3/2}+C_\tau\|\nabla u_t\|_{L^2}\|\nabla u\|_{H^1}\left(\|\nabla u\|_{H^1}^2+\|\nabla B\|_{H^1}\|B_t\|_{H^1}\right) \nonumber\\
&+C_\tau\|\nabla u\|_{H^1}^3\|\nabla B\|_{H^1}^{3}+C_\tau\|\nabla u\|_{H^1}^2\left(\|\nabla u_t\|_{L^2}+\|\nabla u\|_{H^1}^{3/2}+\|\nabla B\|_{H^1}^{3/2}\right)\|\nabla B\|_{H^1}\|B_t\|_{H^1}\nonumber\\
&+C_\tau\left(\|\nabla u\|_{H^1}^2\|\nabla B\|_{H^1}^{3/2}\|\nabla u_t\|_{L^2}+\|\nabla u\|_{H^1}\|\nabla u_t\|_{L^2}\|\nabla B\|_{H^1}\|B_t\|_{H^1}\right).\nonumber
\end{align}
For $K_5$, the H\"older's inequality gives
\begin{align}
K_5= & \frac{\mathrm{d}}{\mathrm{d} t} \int(B \cdot \nabla B)_t \cdot u_t \mathrm{~d} x-\int\left(B_{t t} \cdot \nabla B+2 B_t \cdot \nabla B_t+B \cdot \nabla B_{t t}\right) \cdot u_t \mathrm{~d} x \nonumber\\
\leqslant & \frac{\mathrm{d}}{\mathrm{d} t} \int(B \cdot \nabla B)_t \cdot u_t \mathrm{~d} x+C\|u_t\|_{L^6}\left(\|B_{t t}\|_{L^2}\|\nabla B\|_{L^3}+\|B_t\|_{L^3}\|\nabla B_t\|_{L^2}\right) \nonumber\\
& +C\|B\|_{L^{\infty}}\|\nabla u_t\|_{L^2}\|B_{t t}\|_{L^2} \nonumber\\
\leqslant & \frac{\mathrm{d}}{\mathrm{d} t} \int(B \cdot \nabla B)_t \cdot u_t \mathrm{~d} x+C\|\nabla u_t\|_{L^2}\|B_{tt}\|_{L^2}\|\nabla B\|_{H^1}+C\|\nabla u_t\|_{L^2}\|B_t\|_{L^2}^{1/2}\|\nabla B_t\|_{L^2}^{3/2}.\nonumber
\end{align}
Substituting $K_1$-$K_5$ into $\eqref{zht}$, we get after choosing $\tau>0$ suitable small that
\begin{align}\label{dthbz}
& \frac{\mathrm{d}}{\mathrm{d} t} \int \mu(\rho)|d_t|^2 \mathrm{~d} x+\varphi^{\prime}(t)+\int \rho|u_{t t}|^2 \mathrm{~d} x \nonumber\\
& \leqslant C\|\nabla u_t\|_{L^2}^2\left(\|\nabla u\|_{H^1}+\|\nabla u\|_{H^1}^4+\|\nabla u_t\|_{L^2}+\|\nabla B\|_{H^1}^{3/2}\right)+C\|\nabla u\|_{H^1}^6 \nonumber\\
&\quad+C\|\nabla u\|_{H^1}\|\nabla u_t\|_{L^2}^3+C\|\nabla u\|_{H^1}^2\|\nabla u_t\|_{L^2}\left(\|\nabla u\|_{H^1}\|\nabla B\|_{H^1}^{3/2}+\|\nabla B\|_{H^1}\|B_t\|_{H^1}\right)\nonumber\\
&\quad+C\|\nabla u\|_{H^1}^4\|\nabla B\|_{H^1}^{3/2}+C\|\nabla u_t\|_{L^2}\|\nabla u\|_{H^1}\left(\|\nabla u\|_{H^1}^2+\|\nabla B\|_{H^1}\|B_t\|_{H^1}\right) \\
&\quad+C\|\nabla u\|_{H^1}^3\|\nabla B\|_{H^1}^{3}+C\|\nabla u\|_{H^1}^2\left(\|\nabla u_t\|_{L^2}+\|\nabla u\|_{H^1}^{3/2}+\|\nabla B\|_{H^1}^{3/2}\right)\|\nabla B\|_{H^1}\|B_t\|_{H^1}\nonumber\\
&\quad+C\|\nabla u\|_{H^1}^2\|\nabla B\|_{H^1}^{3/2}\|\nabla u_t\|_{L^2}+C\|\nabla u\|_{H^1}\|\nabla u_t\|_{L^2}\|\nabla B\|_{H^1}\|B_t\|_{H^1}\nonumber\\
&\quad+C\|\nabla u_t\|_{L^2}\|B_{tt}\|_{L^2}\|\nabla B\|_{H^1}+C\|\nabla u_t\|_{L^2}\|B_t\|_{L^2}^{1/2}\|\nabla B_t\|_{L^2}^{3/2},\nonumber
\end{align}
where
$$
\begin{aligned}
\varphi(t) \triangleq & -\int \rho u \cdot \nabla u_t \cdot u_t-\int \rho_t u \cdot \nabla u \cdot u_t \mathrm{~d} x +2 \int u^j \partial_j \mu(\rho) d_i^k \partial_i u_t^k \mathrm{~d} x\\&+\int(B \cdot \nabla B)_t \cdot u_t \mathrm{~d} x,
\end{aligned}
$$
satisfies
\begin{align}\label{fgj}
|\varphi(t)| &\leqslant C\|u\|_{L^{\infty}}\|\nabla u_t\|_{L^2}\|\rho^{1 / 2} u_t\|_{L^2}+C\|\rho_t\|_{L^2}\|u\|_{L^6}\|\nabla u\|_{L^6}\|u_t\|_{L^6} \nonumber\\
&\quad +C\|u\|_{L^{3 q /(q-3)}}\|\nabla \mu(\rho)\|_{L^q}\|d\|_{L^6}\|\nabla u_t\|_{L^2}+C\|B\|_{L^{\infty}}\|B_t\|_{L^2}\|\nabla u_t\|_{L^2} \\
& \leqslant \tau\|\nabla u_t\|_{L^2}^2+C_\tau\left(\|\nabla u\|_{H^1}^2\|\rho^{1 / 2} u_t\|_{L^2}^2+\|\nabla u\|_{H^1}^4+\|\nabla B\|_{H^1}^2\|B_t\|_{L^2}^2\right) ,\nonumber
\end{align}
due to $\eqref{p1}$ and $\eqref{mdtg}$.

Then, multiplying $\eqref{dthbz}$ by $t^{\frac{6}{p}-\delta}$, and using $\eqref{p0}$, Lemma 3.2-3.8 and $\eqref{du5g}$, we get
\begin{align}
\sup _{0 \leq t \leq T} t^{\frac{6}{p}-\delta}\|\nabla u_t\|_{L^2}^2+\int_0^T t^{\frac{6}{p}-\delta}\|\rho^{1 / 2} u_{t t}\|_{L^2}^2 dt \leqslant C(\delta) .\label{7ut}
\end{align}
In particular, multiplying $\eqref{sugj}$ and $\eqref{dutqb}$ by $t^{\frac{12-3p}{2p}-\delta}$ and $t^{\frac{6-p}{p}-\delta}$, by \eqref{dbsj}$, \eqref{t2mg}$ and $\eqref{du5g}$ , we get
\begin{align}
\sup _{t \in[0, T]} t^{{\frac{12-3p}{2p}-\delta}}\left(\|\nabla u\|_{W^{1,s}}^2+\|P\|_{W^{1,s}}^2\right)+\int_0^T t^{\frac{6-p}{p}-\delta}\left(\|\nabla u_t\|_{L^2\cap L^s}^2 + \|P_t\|_{L^2\cap L^s}^2\right)dt \leqslant C(\delta) .\label{duszj}
\end{align}
Furthermore, noticing that by $\eqref{mdtg}$
$$
\|(\rho u_t)_t\|_{L^2}^2 \leqslant C\|\nabla u\|_{H^1}^2\|\nabla u_t\|_{L^2 \cap L^s}^2+C\|\rho^{1 / 2} u_{t t}\|_{L^2}^2,
$$
which together with $\eqref{du5g}$, $\eqref{7ut}$ and $\eqref{duszj}$, we derive $\eqref{dut2}$ and thus completes the proof of Lemma 3.9.
\end{proof}
\section{Proof of Theorem 1.1-1.2}
With all the a priori estimates in Section 3 at hand, we are now in a position to prove Theorems 1.1 and 1.2.\\
\\
\textbf{Proof of Theorem 1.1.}
First, by Lemma 2.1, there exists a $T_*>0$ such that the system $\eqref{MHD}$-$\eqref{ydxw}$ has a unique local strong solution $(\rho, u, B, P)$ in $\Omega \times(0, T_*]$. By $\eqref{cztj}$, there exists a $T_1 \in(0, T_*]$ such that $\eqref{p1}$ holds for $T=T_1$.

Next, set
\begin{align}\label{tcsq}
T^* \triangleq \sup \{T\mid(\rho, u, B, P)\text{\ is\ a\ strong\ solution\ on}\ \Omega\times(0,T]\ \text{and}\ (3.1) \text{ holds}\}.
\end{align}
Then $T^* \geq T_1>0$. Hence, it follow from Lemma 3.2-3.9 that for any $0<\tau<T \leq T^*$ with $T$ finite, one obtain
\begin{align}\label{Tlx}
\nabla u, B, P \in C([\tau, T] ; L^2) \cap C(\bar{\Omega} \times[\tau, T]),\quad \nabla B \in C([\tau, T] ; H^1),
\end{align}
where we have utilized the following standard embedding
$$
\begin{aligned}
&H^1(\tau, T ; L^2) \hookrightarrow C([\tau, T] ; L^2) ,\\
&L^{\infty}(\tau, T ; H^1\cap W^{1,s}) \cap H^1(\tau, T ; L^2) \hookrightarrow C(\bar{\Omega} \times[\tau, T]).
\end{aligned}
$$
Moreover, it from $\eqref{p1}$, $\eqref{midu}$, $\eqref{mddg}$, $\eqref{mdtg}$ and \cite[Lemma 2.3]{Lio1996} that
\begin{align}\label{mdlx}
\rho \in C([0, T] ; L^{3 / 2} \cap H^1), \quad \nabla \mu(\rho) \in C([0, T] ; L^q).
\end{align}
Due to $\eqref{dux}$ and $\eqref{dut2}$, the standard arguments leads to
$$
\rho u_t \in H^1(\tau, T ; L^2) \hookrightarrow C([\tau, T] ; L^2),
$$
which together with $\eqref{Tlx}$ and $\eqref{mdlx}$ implies
\begin{align}\label{fqlx}
\rho u_t+\rho u \cdot \nabla u -B\cdot\nabla B \in C([\tau, T] ; L^2).
\end{align}
On the other hand, noting $(\rho, u)$ satisfies $\eqref{sto}$ with $F \equiv \rho u_t+\rho u \cdot \nabla u-B\cdot\nabla B$, and then by $\eqref{MHD}$, $\eqref{Tlx}$, $\eqref{mdlx}$, $\eqref{fqlx}$ and $\eqref{dut2}$, we find that for any $\gamma \in[2, s)$, $s\triangleq\min\{6,q\}$, one have
\begin{align}\label{dulx}
\nabla u, P \in C([\tau, T] ; D^1 \cap D^{1, \gamma}) .
\end{align}

Now, we claim that
\begin{align}\label{twql}
T^*=\infty.
\end{align}
Otherwise, $T^*<\infty$. By Proposition 3.1, it indicates that $\eqref{p2}$ holds at $T=T^*$. By virtue of $\eqref{Tlx}$, $\eqref{mdlx}$ and $\eqref{dulx}$, one could set
$$
(\rho^*, u^*, B^*)(x) \triangleq(\rho, u, B)(x, T^*)=\lim _{t \rightarrow T^*}(\rho, u, B)(x, t),
$$
and then
$$
\rho^* \in L^{3 / 2} \cap H^1, \quad u^* \in D_{0, \sigma}^1\cap D^{1,\gamma}, \quad B^* \in H_{0, \sigma}^1
$$
for any $\gamma \in[2, s)$. Consequently, $(\rho^*, \rho^* u^*, B^*)$ could be taken as the initial data and Lemma 2.1 implies that there exists some $T^{* *}>T^*$ such that $\eqref{p1}$ holds for $T=T^{* *}$, which contradicts the definition of $T^*$. So $\eqref{twql}$ holds. $\eqref{dmidj}$ and $\eqref{dutgui}$ come directly from $\eqref{usj}$, $\eqref{dbsj}$, $\eqref{t2mg}$, $\eqref{du5g}$, $\eqref{dut3}$ and $\eqref{mddg}$. We complete the proof of Theorem.

\textbf{Proof of Theorem 1.2}: With the global existence result at hand (see Proposition 1.1), one can modify slightly the proofs of Lemma 3.2-3.9 and $\eqref{mddg}$ to obtain $\eqref{cmgj}$ and $\eqref{gdutgu}$.

\end{document}